\documentclass{amsart}

\usepackage{amsmath,amsthm,amssymb,mathrsfs,color,enumerate,accents,amsfonts,bbold,adjustbox,etoolbox,changepage,bbm,adjustbox,witharrows,microtype}
\usepackage{theoremref}
\usepackage[margin = 1in]{geometry}

\usepackage{pgfplots}
\pgfplotsset{compat=1.15}
\usepgfplotslibrary{fillbetween} 
\usepackage{tikz-cd}
\usepackage{tikz}
\usetikzlibrary{tqft,knots,decorations.markings,arrows,external,hobby,positioning}

\usepackage{graphicx,xcolor,transparent}
\graphicspath{./Pictures/}

\usepackage{hyperref}
\hypersetup{
    colorlinks=true, 
    linktoc=all, 
    linkcolor=blue, 
}


\newtheorem{thm}{Theorem}[subsection]
\newtheorem*{thm*}{Theorem}
\newtheorem{cor}{Corollary}[subsection]
\newtheorem{lem}{Lemma}[subsection]

\theoremstyle{definition}
\newtheorem{dff}{Definition}[subsection]
\newtheorem{xmp}{Example}[subsection]
\newtheorem{rmk}{Remark}

\newcommand{\mbb}[1]{\mathbb{#1}}
\newcommand{\mcal}[1]{\mathcal{#1}}

\definecolor{darkgreen}{RGB}{26,190,26}

\title{Involutory Hopf group-coalgebras and invariants of flat bundles over 4-manifolds}
\author{Nicolas Bridges}
\address{Department of Mathematics \\ 150 N University St \\ Purdue University \\ West Lafayette, IN \\ 47907}
\email{bridge18@purdue.edu}

\author{Shawn X. Cui}
\address{Departments of Mathematics and Department of Physics and Astronomy \\ 150 N University St \\ Purdue University \\ West Lafayette, IN \\ 47907}
\email{cui177@purdue.edu}

\date{\today}

\begin{document}

\begin{abstract}
    We give invariants of flat bundles over 4-manifolds generalizing a result by Chaidez, Cotler, and Cui (Alg. \& Geo. Topology '22). We utilize a structure called a Hopf $G$-triplet for $G$ a group, which generalizes the notion of a Hopf triplet by Chaidez, Cotler, and Cui. In our construction, we present flat bundles over 4-manifolds using colored trisection diagrams: a direct analogue of colored Heegaard diagrams as described by Virelizier. Our main result is that involutory Hopf $G$-triplets of finite type yield well-defined invariants of $G$-colored trisection diagrams, and that if the monodromy of a flat bundle has image in $G$ we obtain invariants of flat bundles. We also show that a special Hopf $G$-triplet yields the invariant from Hopf $G$-algebras described by Mochida, thus generalizing the construction.
\end{abstract}

\maketitle

\section{Introduction}
\label{Introduction_Sec}
Hopf algebras are a rich source for constructing quantum invariants of three and four manifolds. Two fundamental families of quantum invariants of 3-manifolds are the Kuperberg invariant \cite{Kup91, kuperberg1997non} based on finite dimensional Hopf algebras and the Hennings invariant\footnote{It is sometimes also called Hennings-Kauffman-Radford invariant.} \cite{hen96, kauffman1995invariants} based on finite dimensional ribbon Hopf algebras, both introduced in the 1990's. They are closely related to the Turaev-Viro and Reshetikhin-Turaev construction of quantum invariants from tensor categories. The Kuperberg and the Hennings invariants can be generalized by relaxing the structure of the Hopf algebra or renormalizing the formula. For the development of Hennings-type invariants, see \cite{beliakova2018logarithmic, De_Renzi2018-vx} and references therein, and for that of Kuperberg-type invariants, see \cite{Costantino2020-xe, kashaev2019generalized}. 

Another direction of generalization is to refine the invariants to be sensitive to extra structures of manifolds such as a spin structure or a (co)-homology class. In \cite{sawin2002invariants}, Sawin defined a Henning-type invariant for spin 3-manifolds. In \cite{virel2001, V05}, Virelizier constructed a Hennings-type and a Kuperberg-type invariant for flat bundles of 3-manifolds utilizing a Hopf group-coalgebra. The notion of Hopf group-coalgebras was first introduced by Turaev (\cite{tur00}) because their categories of representations have the natural structure of group-categories, which he used to define invariants of flat bundles on 3-manifolds. More recently in \cite{de2024hennings}, De Renzi, Martel, and Wang used Hopf group-coalgebras to define a Hennings-type 3-dimensional topological quantum field theory (TQFT) for cobordisms decorated with cohomology classes.

In dimension four, Chaidez, Cotler, and Cui (CCC) initiated a Kuperberg-type construction of 4-manifold invariants \cite{CCC22, chaidez2023combed} using  trisection diagrams -- a 3d analog of Heegaard diagrams proposed by Gay and Kirby \cite{GK16}. The CCC invariant takes as input the structure called a Hopf triplet, which consists of three Hopf algebras and three bilinear forms subject to certain compatibility conditions. The invariant recovers some cases of the dichromatic invariant \cite{barenz2018dichromatic} and the Crane-Yetter invariant \cite{crane1997state} of 4-manifolds.

Regarding 4d Hennings-type invariants, De Renzi and Beliakova used Hopf group-coalgebras to define an invariant of 4-dimensional 2-handlebodies decorated with a relative second-cohomology class \cite{Bel23}. Mochida used the foundations set by Virelizier to adapt the Hennings invariant for Kirby diagrams for 4-manifolds with flat principal bundles \cite{Moc24}. Mochida defines the structure of a colored Kirby diagram, which captures the data of the flat bundle, and utilize the structure of  quasitriangular Hopf group-algebras   to define the invariant. This invariant also recovers some cases of  the dichromatic invariant and Crane-Yetter invariant when the flat bundle is trivial.

In this paper, our goal is to extend CCC's invariant of 4-manifolds to an invariant of 4-manifolds with flat bundles. We construct this invariant by generalizing Virilizier's invariant of flat bundles over 3-manifolds to trisection diagrams. The main algebraic structure in this paper will be that of a Hopf $G$-triplet, for a group $G$. This structure is comprised of a Hopf $G$-algebra, two Hopf $G$-coalgebras, and a bilinear form on each pair of them. This structure generalizes Hopf triplets defined by CCC. A particularly interesting construction of a Hopf $G$-triplet is from a quasitriangular Hopf $G$-algebra, which will be used to give a relationship between the invariant defined in this paper and Mochida's invariant. 

The construction of our invariant takes the following steps: Given a group $G$ and a trisection diagram $T$ for a closed orientable connected 4-manifold $X$, we use $T$ to compute a presentation for the fundamental group of $X$ and define a colored trisection diagram colored by a flat $G$-connection, i.e. a homomorphism $\pi_1(X,*)\to G$. The algebraic data is an involutory Hopf $G$-triplet of finite type. With this data, we then use a $G$-analog of the Kuperberg-like construction by CCC to create a tensor network associated to $T$. The contraction of this tensor network with the application of a normalization factor then defines the invariant of flat bundles.

The invariant described in the paper thus generalizes some of the invariants described above. Indeed, when $G$ is trivial, this invariant recovers the 4-manifold invariant defined by CCC \cite{CCC22}, and when the Hopf $G$-triplet comes from a quasitriangular Hopf $G$-algebra, this invariant recovers Mochida's invariant \cite{Moc24}. The structures defined throughout the paper additionally give rise to generalizations of the Kuperberg invariant of 3-manifolds, as well as Virelizier's invariant of flat bundles of 3-manifolds. In function of defining the Hopf $G$-triplet, we define the notion of a Hopf pair of Hopf $G$-coalgebras and the notion of a Hopf $G$-doublet. Applying the construction to a Heegaard diagram using a Hopf pair recovers the Kuperberg invariant \cite{Kup91}, and applying the construction to a colored Heegaard diagram using a Hopf $G$-doublet recovers Virelizier's invariant \cite{V05}.

The paper is organized as follows: In Section \ref{HopfGCoalgebra_Sec}, we review Hopf $G$-(co)algebras and define the structure of a Hopf $G$-triplet. We also give a fundamental lemma of Hopf $G$-triplets which mimics the fundamental lemma of Hopf triplets given by CCC (cf. Lemma 2.19 in \cite{CCC22}). In Section \ref{ColoredTrisectionInvariants_Sec}, we review the definition of trisections and explain how to color trisection diagrams to describe flat bundles. We then introduce the invariant in the form of an invariant of colored trisection diagrams. In Section \ref{FlatBundlesInvariants_Sec}, we define the invariant $Z((\xi,\tilde x);G;\mcal H,e)$, where $(\xi, \tilde x)$ is a pointed flat bundle over a 4-manifold, $G$ is a group, and $(\mcal H,e)$ is a Hopf $G$-triplet along with a set $e$ of integrals. The main theorem in the paper is Theorem \ref{thm:main}: 
\begin{thm*}
    \begin{enumerate}[(a)]
    \item $Z((\xi,\tilde x);G;\mcal H, e)$ is an invariant of pointed flat bundles over 4-manifolds.
    \item The function taking $\tilde x\in \tilde X \mapsto Z((\xi,\tilde x);G;\mcal H,e)$ is constant on the path components of $\tilde X$. 
    \item If $H^\alpha$ is crossed, or $G$ is abelian, or the monodromy of $\xi$ is surjective, then $Z((\xi,\tilde x);G;\mcal H,e)$ does not depend on the base point $\tilde x$.
    \end{enumerate}
\end{thm*}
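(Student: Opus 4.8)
The plan is to reduce every assertion about the pointed flat bundle $(\xi,\tilde x)$ to a combinatorial statement about its $G$-colored trisection diagram, and then invoke the move-invariance of $Z$ that we have already established for colored diagrams. For part (a), I would start from the Gay--Kirby theorem, which asserts that any two trisection diagrams of the fixed closed orientable connected $4$-manifold $X$ differ by a finite sequence of handle slides, stabilizations, and surface diffeomorphisms. The key step is to promote this to the colored category: a pointed flat bundle determines a monodromy homomorphism $\rho\colon\pi_1(X,*)\to G$, and evaluating $\rho$ on the generators of the $\pi_1$-presentation read off from $T$ produces the coloring. I would then check that each Gay--Kirby move lifts to a colored move under which the coloring transforms in the prescribed way, and, conversely, that two colored diagrams arising from equivalent pointed flat bundles are related by exactly these colored moves. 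Since $Z$ is already known to be invariant under the colored moves, this shows that $Z((\xi,\tilde x);G;\mcal{H},e)$ depends only on the equivalence class of $(\xi,\tilde x)$.

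For part (b), a path from $\tilde x$ to $\tilde x'$ inside a single path component of $\tilde X$ covers a based loop in $X$ whose monodromy is trivial, since the lift returns to the same sheet. I would argue that changing the lift along such a path modifies the colored diagram only through a relabeling of the marked sheet, and that this relabeling is absorbed by the integrals $e$ together with the involutory condition $S^2=\mathrm{id}$, leaving the contracted tensor network unchanged. Since the path components of $\tilde X$ are governed by the monodromy action on the fiber, constancy on each path component follows once this trivial-monodromy case is settled.

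Part (c), which I expect to be the main obstacle, concerns full base-point independence, i.e.\ insensitivity of $Z$ to moving $\tilde x$ between path components of $\tilde X$; such a move replaces $\rho$ by a conjugate $g\rho g^{-1}$. When $G$ is abelian the conjugate equals $\rho$ and there is nothing to prove. When the monodromy is surjective, $\Img\rho=G$ forces the monodromy action on the fiber to be transitive, so $\tilde X$ is connected, a single path component remains, and the claim is immediate from part (b). The crossed case is the delicate one: here one uses the crossing isomorphisms $\varphi_g$ intertwining the structure maps of $H^\alpha$, and the heart of the argument is to show that applying $\varphi_g$ simultaneously at every colored leg of the tensor network leaves the full contraction invariant. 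This reduces to the naturality of the crossing with respect to comultiplication, the three bilinear pairings, and the integrals $e$, and is the most computation-heavy step; verifying compatibility at the pairings, where the crossing must commute with the form identifying the algebra and coalgebra factors, is where I anticipate the real difficulty. Assembling the three cases yields the stated base-point independence.
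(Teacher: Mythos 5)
Your parts (a) and (c) follow essentially the same route as the paper. In (a) you reduce to equivalence of $G$-colored trisection diagrams by tracking how the coloring transforms under each Gay--Kirby move and then citing the already-established invariance of $Z$ under colored moves; this is exactly the paper's argument. In (c), your surjective case (surjectivity forces $\tilde X$ to be path-connected, so (b) applies) and your crossed case (conjugation-invariance of the colored bracket via the crossing isomorphisms, i.e.\ Lemma \ref{crossedinvariance_lem}) both match the paper; for the abelian case the paper observes that every Hopf $G$-coalgebra over an abelian group is crossed, whereas you note directly that the conjugate monodromy equals the original --- both are fine.

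Part (b), however, contains a genuine error. If $\tilde x$ and $\tilde x'$ are distinct points of the same fiber lying in one path component of $\tilde X$, then a path $\tilde\gamma$ from $\tilde x$ to $\tilde x'$ projects to a loop $\gamma$ in $X$ whose monodromy is precisely the element $b\in G$ with $\tilde x' = b\cdot\tilde x$; it is \emph{not} trivial --- the lift fails to close up exactly because $b\neq 1$. What is true is only that $b$ lies in the image of the monodromy, since the path components of $\tilde X$ correspond to the cosets of $\mathrm{Im}(f)$ in $G$. Changing the basepoint within a path component therefore conjugates the coloring by an element of $\mathrm{Im}(f)$, and controlling this conjugation is the actual content of (b); it is not ``absorbed by the integrals together with $S^2=\mathrm{id}$'' --- no such cancellation of a conjugated coloring is available without a crossing, and if it were, part (c) would be vacuous. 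The paper's argument sidesteps the tensor network entirely: dragging $\tilde x$ to $\tilde x'$ along the path (by lifting an ambient isotopy of $X$ supported in a tubular neighborhood of $p\circ\tilde\gamma$ to a $G$-equivariant isotopy of $\tilde X$) produces an equivalence of pointed flat bundles $(\xi,\tilde x)\cong(\xi,\tilde x')$, so (b) is an immediate corollary of (a): the induced conjugation of the monodromy is exactly the change of coloring effected by the corresponding diffeomorphism of the trisection diagram, which (a) already covers. You should replace your argument for (b) by this reduction.
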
 

Finally, in Sections \ref{MochidaInvariant_Sec}, \ref{3ManifoldInvariants_Sec}, and \ref{Examples_Sec}, we recover Virelizier's and Mochida's invariants described above and give computation showing this invariant can distinguish bundles.

\section{\texorpdfstring{Hopf $G$-Coalgebras}{Hopf G-Coalgebras}}
\label{HopfGCoalgebra_Sec}
\subsection{Tensor Diagrams and Basic Facts}

Let $V$ be a finite dimensional vector space and $V^*$ its linear dual. A \emph{tensor diagram} in $V$ is a pair $(\mcal G, \mcal T = \{\mcal T_v\})$ where,
\begin{enumerate}
    \item $\mcal G$ is a directed graph such that at each vertex $v$, there is a local ordering on the set of incoming legs (i.e. edges) and a local ordering on the set of outgoing legs by $\{1,\dots ,i_v\}$ and $\{1,\dots, o_v\}$, respectively;
    \item for each vertex $v$, $\mcal T_v\in V^1\otimes \cdots \otimes V^{i_v}\otimes V_1\otimes \cdots \otimes V_{o_v}$, where each $V^i$ is a copy of $V^*$ associated with the $i$-th incoming leg and each $V_j$ is a copy of $V$ associated with the $j$-th outgoing leg. In this case, $\mcal T_v$ is called an $(i_v, o_v)$ tensor.
\end{enumerate}

Choosing a basis $\{v_1,\dots ,v_k\}$ for $V$ and a dual basis $\{v^1,\dots ,v^k\}$ for $V^*$, then an $(m,n)$ tensor $\mcal T$ can be generically written as \[\mcal T = \sum \mcal T_{i_1,\dots ,i_m}^{j_1,\dots j_n} v^{i_1}\otimes \cdots \otimes v^{i_m}\otimes v_{j_1}\otimes \cdots \otimes v_{j_n}\] An example of a (3,2) tensor $\mcal T$, as shown tensor-diagrammatically, is:

\begin{center}
    \begin{tikzpicture}
        \node at (0,0) (T) {$\mcal T$};
        \foreach \x in {155,180,-155} {
            \draw[<-] (T) -- ++(\x:0.75);
        }
        \foreach \x in {25,-25} {
            \draw[->] (T) -- ++(\x:0.75);
        }
    \end{tikzpicture}
\end{center}

In tensor diagrams, vertices are replaced by the labels of the corresponding tensors. An important convention we use is that incoming edges have local ordering given by enumerating counterclockwise. Similarly, outgoing edges are enumerated clockwise. This convention uniquely determines a local ordering if both types of edges are present. If there is any ambiguity, we will label which edge is first in the ordering.

There is a binary operation on tensors called \emph{contraction}. Given an $(m,n)$ tensor $\mcal T$ and an $(m',n')$ tensor $\mcal T'$ where $n,m'\neq 0$, then we can define a contraction of the $k$th outgoing edge of $\mcal T$ with the $l$th incoming edge of $\mcal T'$ by just connecting the corresponding edges in the diagram.

For example, the figure below shows the contraction of the first outgoing edge of $\mcal T$ with the second incoming edge of $\mcal T'$:

\begin{center}
    \begin{tikzpicture}
        \node at (0,0) (T) {$\mcal T$};
        \node at (1,0) (T') {$\mcal T'$};
        \foreach \x in {155,180,-155} {
            \draw[<-] (T) -- ++(\x:0.75);
        }
        \draw[->] (T) -- (T');
        \foreach \x in {155,-155} {
            \draw[<-] (T') -- ++(\x:0.75);
        }
        \foreach \x in {25,-25} {
            \draw[->] (T') -- ++(\x:0.75);
        }
    \end{tikzpicture}
\end{center}

Because $V$ is finite dimensional, we may identify $(m,n)$ tensors with linear maps from $V^{\otimes m}$ to $V^{\otimes n}$.

\begin{dff}
    Let $\Bbbk$ be a field. A (finite-dimensional) $\Bbbk$-\emph{algebra} is a triple $(H,M,i)$ consisting of a finite-dimensional vector space $H$ over $\Bbbk$, a (2,1) tensor $M$ called the \emph{multiplication}, and a (0,1) tensor $i$ called the \emph{unit} satisfying:

    \noindent Axiom 1: (Associativity)
    \begin{center}
        \begin{tikzpicture}
            \node at (0,0) (ML) {$M$};
            \node at (1,0) (MR) {$M$};
            \draw[->] (ML) -- (MR);
            \draw[<-] (ML) -- ++(135:0.75);
            \draw[<-] (ML) -- ++(-135:0.75);
            \draw[<-] (MR) -- ++(-135:0.75);
            \draw[->] (MR) -- ++(0.75,0);
            \node at (2.5,0) {$=$};
            \begin{scope}[shift = {(3.75,0)}]
                \node at (0,0) (ML) {$M$};
                \node at (1,0) (MR) {$M$};
                \draw[->] (ML) -- (MR);
                \draw[<-] (ML) -- ++(135:0.75);
                \draw[<-] (ML) -- ++(-135:0.75);
                \draw[<-] (MR) -- ++(135:0.75);
                \draw[->] (MR) -- ++(0.75,0);
            \end{scope}
        \end{tikzpicture}
    \end{center}

    \noindent Axiom 2: (Unit)
    \begin{center}
        \begin{tikzpicture}
            \node at (1,0) (M) {$M$};
            \node[above left = 0.25 of M] (i) {$i$};
            \draw[->] (i) -- (M);
            \draw[<-] (M) -- ++(-135:0.75);
            \draw[->] (M) -- ++(0.75,0);
            \node at (2.5,0) {$=$};
            \begin{scope}[shift = {(3.25,0)}]
                \node at (1,0) (M) {$M$};
                \node[below left = 0.25 of M] (i) {$i$};
                \draw[->] (i) -- (M);
                \draw[<-] (M) -- ++(135:0.75);
                \draw[->] (M) -- ++(0.75,0);
            \node at (2.5,0) {$=$};
            \begin{scope}[shift = {(3.25,0)}]
                \draw[->] (0,0) -- (1,0);
            \end{scope}
            \end{scope}
        \end{tikzpicture}
    \end{center}
\end{dff}

\begin{dff}
    Let $G$ be a group with identity element $1$. A \emph{Hopf G-coalgebra} (over $\Bbbk$) consists of a family of $\Bbbk$-algebras $H = \{ (H_g, M_g, i_g) \}_{g\in G}$ endowed with a family $\Delta = \{\Delta_{g,h}:H_{gh}\to H_g\otimes H_h\}_{g,h\in G}$ of algebra morphisms called the \emph{comultiplicaton}, an algebra morphism $\epsilon:H_1\to \Bbbk$ called the \emph{counit}, and a family $S = \{S_g :H_g\to H_{g^{-1}}\}_{g\in G}$ of $\Bbbk$-linear maps called the \emph{antipode} such that, for all $g,h,k\in G$, the following axioms are satisfied:

    \begin{enumerate}[{Axiom} 1:]
    \item (Coassociativity)
    \begin{center}
        \begin{tikzpicture}
            \node at (0,0) (DL) {$\Delta_{gh,k}$};
            \node at (1.5,0) (DR) {$\Delta_{g,h}$};
            \draw[<-] (DL) -- ++(-1,0);
            \draw[->] (DL) -- (DR);
            \draw[->] (DL) -- ++(-45:0.75);
            \foreach \x in {45,-45} {
                \draw[->] (DR) -- ++(\x:0.75);
            }
            \node at (3,0) {$=$};
            \begin{scope}[shift = {(5,0)}]
                \node at (0,0) (DL) {$\Delta_{g,hk}$};
                \node at (1.5,0) (DR) {$\Delta_{h,k}$};
                \draw[<-] (DL) -- ++(-1,0);
                \draw[->] (DL) -- (DR);
                \draw[->] (DL) -- ++(45:0.75);
                \foreach \x in {45,-45} {
                    \draw[->] (DR) -- ++(\x:0.75);
                }
            \end{scope}
        \end{tikzpicture}
    \end{center}

    \item (Counit)
    \begin{center}
        \begin{tikzpicture}
            \node at (0,0) (D) {$\Delta_{1,g}$};
            \node[above right = 0.25 of D] (e) {$\epsilon$};
            \draw[<-] (D) -- ++(-1,0);
            \draw[->] (D) -- ++(-45:0.75);
            \draw[->] (D) -- (e);
            \node at (1.6,0) {$=$};
            \begin{scope}[shift = {(3.2,0)}]
            \node at (0,0) (D) {$\Delta_{g,1}$};
            \node[below right = 0.25 of D] (e) {$\epsilon$};
            \draw[<-] (D) -- ++(-1,0);
            \draw[->] (D) -- ++(45:0.75);
            \draw[->] (D) -- (e);
            \end{scope}
            \node at (4.8,0) {$=$};
            \draw[->] (5.7,0) -- (6.7,0);
        \end{tikzpicture}
    \end{center}
    
    \item (Antipode)
\begin{center}
        \begin{tikzpicture}
            \node at (0,0) (D1) {$\Delta_{g^{-1},g}$};
            \node at (1.5,0.5) (S1) {$S_{g^{-1}}$};
            \node at (2.75,0) (M1) {$M_g$};
            \draw[<-] (D1) -- ++(-1,0);
            \draw[->] (D1) -- (S1);
            \draw[->] (S1) -- (M1);
            \draw[->] (D1) .. controls +(1,-0.5) and +(-1,-0.5) .. (M1);
            \draw[->] (M1) -- ++(1,0);
            \node at (4.5,0) {$=$};
            \begin{scope}[shift = {(6.3,0)}]
                \node at (0,0) (D1) {$\Delta_{g,g^{-1}}$};
                \node at (1.5,-0.5) (S1) {$S_{g^{-1}}$};
                \node at (2.75,0) (M1) {$M_g$};
                \draw[<-] (D1) -- ++(-1,0);
                \draw[->] (D1) -- (S1);
                \draw[->] (S1) -- (M1);
                \draw[->] (D1) .. controls +(1,0.5) and +(-1,0.5) .. (M1);
                \draw[->] (M1) -- ++(1,0);
            \end{scope}
            \begin{scope}[shift = {(6,-1.25)}]
                \node at (-1.5,0) {$=$};
                \node at (0,0) (e) {$\epsilon$};
                \node at (1,0) (i) {$i_g$};
                \draw[<-] (e) -- ++(-0.75,0);
                \draw[->] (i) -- ++(0.75,0);
            \end{scope}
        \end{tikzpicture}
    \end{center}
    \end{enumerate}

    $H$ is \emph{involutory} if 
    \begin{center}
    \begin{tikzpicture}
        \node at (0,0) (S1) {$S_g$};
        \node at (1,0) (S2) {$S_{g^{-1}}$};
        \draw[<-] (S1) -- ++(-0.75,0);
        \draw[->] (S1) -- (S2);
        \draw[->] (S2) -- ++(0.75,0);
        \node at (2.5,0) {$=$};
        \draw[->] (3.25,0) -- (4.25,0);
    \end{tikzpicture}
    \end{center}
\end{dff}

Note that when $G = 1$, then $H$ is a Hopf algebra. In particular, restricting to the 1-component yields a Hopf algebra $(H_1,M_1, i_1, \Delta_{1,1},\epsilon, S_1)$.

We say that $H$ is of \emph{finite type} if $H_g$ is finite dimensional for all $g\in G$. Note that the definition above is a definition for a Hopf $G$-coalgebra of finite type. We say that $H$ is \emph{finite-dimensional} if $H$ is finite-dimensional as a graded vector space.

We will use the following abbreviations (in light of (co)associativity)

\begin{center}
    \begin{tikzpicture}
        \node at (0,0) (M0) {$M_g$};
        \draw[->] (M0) -- ++(0.75,0);
        \foreach \x in {90,120,150,240}
        {
            \draw[<-] (M0) -- ++(\x:0.75);
        }
        \foreach \x in {175,195,215}
        {
            \filldraw (\x:0.55cm) circle [radius = 0.5pt];
        }
        \node at (1.5,0) {$=$};
    \begin{scope}[shift = {(3,0)}]
        \node at (0,0) (M1) {$M_g$};
        \node at (1,0) (M2) {$M_g$};
        \node at (2,0) (dots) {$\cdots$};
        \node at (3,0) (M3) {$M_g$};
        \draw[<-] (M1) -- ++(-0.75,0);
        \draw[<-] (M1) -- ++(-135:0.75);
        \draw[<-] (M2) -- ++(-135:0.75);
        \draw[<-] (M3) -- ++(-135:0.75);
        \draw[->] (M1) -- (M2);
        \draw[->] (M2) -- (dots);
        \draw[->] (dots) -- (M3);
        \draw[->] (M3) -- ++(0.75,0);
    \end{scope}
    \end{tikzpicture}\vspace{0.5cm}
    
    \begin{tikzpicture}
        \node at (0,0) (M) {$M_g$};
        \draw[<-] (M) -- ++(-0.75,0);
        \draw[->] (M) -- ++(0.75,0);
        \node at (1.25,0) {$=$};
        \draw[->] (2,0) -- (2.6,0);
    \begin{scope}[shift = {(4,0)}]
        \node at (0,0) (M1) {$M_g$};
        \draw[->] (M1) -- ++(0.75,0);
        \node at (1.25,0) {$=$};
        \node at (2,0) (i) {$i_g$};
        \draw[->] (i) -- ++(0.75,0);
    \end{scope}
    \end{tikzpicture}

    \begin{tikzpicture}
        \node at (0,0) (D) {$\Delta_{g_1,\dots, g_n}$};
        \draw[<-] (D) -- ++(-1.25,0);
        \foreach \angle in {90,30,60,-60}
        {
            \draw[->] (D) -- ++(\angle:1);
        }
        \foreach \angle in {5,-15,-35}
        {
            \filldraw (\angle:0.75cm) circle [radius = 0.5pt];
        }
        \node at (1.75,0) {$=$};
    \begin{scope}[shift = {(4,0)}]
        \node at (0,0) (D1) {$\Delta_{g_1g_2\cdots g_{n-1},g_n}$};
        \node at (1.75,0) (dots) {$\cdots$};
        \node at (3,0) (D2) {$\Delta_{g_1g_2,g_3}$};
        \node at (4.5,0) (D3) {$\Delta_{g_1,g_2}$};
        \draw[<-] (D1) -- ++(-1.5,0);
        \draw[->] (D1) -- ++(-45:0.75);
        \draw[->] (D2) -- ++(-45:0.75);
        \draw[->] (D3) -- ++(-45:0.75);
        \draw[->] (D1) -- (dots);
        \draw[->] (dots) -- (D2);
        \draw[->] (D2) -- (D3);
        \draw[->] (D3) -- ++(1,0);
    \end{scope}
    \end{tikzpicture}\vspace{0.25cm}
    
    \begin{tikzpicture}
        \node at (0,0) (D) {$\Delta_g$};
        \draw[<-] (D) -- ++(-0.75,0);
        \draw[->] (D) -- ++(0.75,0);
        \node at (1.25,0) {$=$};
        \draw[->] (2,0) -- (2.6,0);
    \begin{scope}[shift = {(4,0)}]
        \node at (0.5,0) (D1) {$\Delta_1$};
        \draw[<-] (D1) -- ++(-0.75,0);
        \node at (1.25,0) {$=$};
        \node at (2.5,0) (e) {$\epsilon$};
        \draw[<-] (e) -- ++(-0.75,0);
    \end{scope}
    \end{tikzpicture}

    \end{center}

If $H$ is a Hopf $G$-coalgebra with invertible antipode (i.e. each $S_g$ is invertible), then $H^{op}$ is a new Hopf $G$-coalgebra with $(H^{op})_g = H_g$, and multiplication $M^{op} = \{M_g^{op}\}_{g\in G}$, where $M_g^{op}$ is the tensor

\begin{center}
    \begin{tikzpicture}
        \node at (0,0) (M) {$M_g^{op}$};
        \foreach \angle in {150,210} {
        \draw[<-] (M) -- ++(\angle:1);
        }
        \draw[->] (M) -- ++(0.75,0);
        \node at (1.5,0) {$=$};
        \begin{scope}[shift = {(3.25,0)}]
            \node at (0.5,0) (M1) {$M_g$};
        \draw[->] (-1,-0.25) .. controls +(0.5,0) and +(-1,0.5) .. (M1);
        \draw[->] (-1,0.25) .. controls +(0.5,0) and +(-1,-0.5) .. (M1);
        \draw[->] (M1) -- ++(0.75,0);
        \end{scope}
    \end{tikzpicture}
\end{center}

The antipode is given by $S^{op} = \{S_g^{op} = (S_{g^{-1}})^{-1}\}_{g\in G}$, and all other tensors are kept the same. Similarly, we can define $H^{cop}$, which is defined by setting $(H^{cop})_g = H_{g^{-1}}$ with the comultiplication $\Delta_{g,h}^{cop}$ given by the tensor

\begin{center}
    \begin{tikzpicture}
        \node at (0,0) (D) {$\Delta_{g,h}^{cop}$};
        \foreach \angle in {30,-30} {
        \draw[->] (D) -- ++(\angle:1);
        }
        \draw[<-] (D) -- ++(-1,0);
        \node at (1.5,0) {$=$};
        \begin{scope}[shift = {(3,0)}]
            \node at (0.5,0) (D1) {$\Delta_{h^{-1},g^{-1}}$};
        \draw[->] (D1) .. controls +(1,0.5) and +(-0.5,0) .. (2.5,-0.25);
        \draw[->] (D1) .. controls +(1,-0.5) and +(-0.5,0) .. (2.5,0.25);
        \draw[<-] (D1) -- ++(-1.25,0);
        \end{scope}
    \end{tikzpicture}
\end{center}

Notice that $\Delta_{g,h}^{cop}$ takes in an element of $H^{cop}_{gh} = H_{h^{-1}g^{-1}}$, and outputs an element in $H^{cop}_g \otimes H^{cop}_h = H_{g^{-1}} \otimes H_{h^{-1}}$, hence the labels are consistent with the conventions for Hopf $G$-coalgebras.

The antipode is given by $S^{cop} = \{S_g^{cop} = (S_{g})^{-1}\}_{g\in G}$, and all other tensors are kept the same.

In a Hopf $G$-coalgebra, the antipode $S$ is an anti-algebra morphism and satisfies:

\begin{center}
    \begin{tikzpicture}
        \node at (0,0) (S) {$S_{gh}$};
        \node at (1.5,0) (D) {$\Delta_{h^{-1},g^{-1}}$};
        \draw[<-] (S) -- ++(-0.75,0);
        \draw[->] (S) -- (D);
        \draw[->] (D) -- ++(30:1);
        \draw[->] (D) -- ++(-30:1);
        \node at (3.5,0) {$=$};
        \begin{scope}[shift = {(5.5,0)}]
            \node at (0,0) (D1) {$\Delta_{h^{-1},g^{-1}}^{cop}$};
            \node at (1.5,0.5) (SU) {$S_h$};
            \node at (1.5,-0.5) (SL) {$S_g$};
            \draw[<-] (D1) -- ++(-1.25,0);
            \draw[->] (D1) -- (SU);
            \draw[->] (D1) -- (SL);
            \draw[->] (SU) -- ++(0.75,0);
            \draw[->] (SL) -- ++(0.75,0);
        \end{scope}
    \end{tikzpicture}
\end{center}

\begin{dff}
    A \emph{morphism} of Hopf $G$-coalgebras from $H$ to $H'$ is a family of algebra morphisms $f_g: H_g \to H_g'$ such that for all $g,h\in G$ \[(f_g\otimes f_h)\circ \Delta_{g,h} = \Delta'_{g,h} \circ f_{gh}\] \[\epsilon\circ f_1 = \epsilon'\] and \[S_g'\circ f_g = f_{g^{-1}}\circ S_g\]
\end{dff}

\begin{rmk}
    We note that if $G$ is trivial, then a morphism of Hopf $G$-coalgebras is a morphism of Hopf algebras.
\end{rmk}

\begin{lem}
    Let $H$ be an involutory Hopf $G$-coalgebra of finite type. Then the following \emph{ladders} are invertible:

    \begin{center}
        \begin{tikzpicture}
            \node at (0,0) (M1) {$M_g$};
            \node at (0,-2) (D1) {$\Delta_{g,h}$};
            \draw[<-] (M1) -- ++(-0.75,0);
            \draw[<-] (D1) -- ++(-0.75,0);
            \draw[->] (D1) -- (M1);
            \draw[->] (M1) -- ++(0.75,0);
            \draw[->] (D1) -- ++(0.75,0);
        \begin{scope}[shift = {(3,0)}]
            \node at (0,0) (M2) {$M_g$};
            \node at (0,-2) (D2) {$\Delta_{g^{-1},h}$};
            \node at (0,-1) (S2) {$S_{g^{-1}}$};
            \draw[<-] (M2) -- ++(-1,0);
            \draw[<-] (D2) -- ++(-1,0);
            \draw[->] (M2) -- ++(1,0);
            \draw[->] (D2) -- ++(1,0);
            \draw[->] (D2) -- (S2);
            \draw[->] (S2) -- (M2);
        \end{scope}
        \begin{scope}[shift = {(6,0)}]
            \node at (0,0) (M3) {$M_g$};
            \node at (0,-2) (D3) {$\Delta_{h,g}$};
            \draw[<-] (M3) -- ++(-0.75,0);
            \draw[->] (D3) -- ++(-0.75,0);
            \draw[->] (D3) -- (M3);
            \draw[->] (M3) -- ++(0.75,0);
            \draw[<-] (D3) -- ++(0.75,0);
        \end{scope}
        \begin{scope}[shift = {(9,0)}]
            \node at (0,0) (M4) {$M_g$};
            \node at (0,-2) (D4) {$\Delta_{h,g^{-1}}$};
            \node at (0,-1) (S4) {$S_{g^{-1}}$};
            \draw[<-] (M4) -- ++(-1,0);
            \draw[->] (D4) -- ++(-1,0);
            \draw[->] (M4) -- ++(1,0);
            \draw[<-] (D4) -- ++(1,0);
            \draw[->] (D4) -- (S4);
            \draw[->] (S4) -- (M4);
        \end{scope}
        \end{tikzpicture}\vspace{0.5cm}

        \begin{tikzpicture}
            \node at (0,0) (M1) {$M_g$};
            \node at (0,-2) (D1) {$\Delta_{g,h}$};
            \draw[->] (M1) -- ++(-0.75,0);
            \draw[<-] (D1) -- ++(-0.75,0);
            \draw[->] (D1) -- (M1);
            \draw[<-] (M1) -- ++(0.75,0);
            \draw[->] (D1) -- ++(0.75,0);
        \begin{scope}[shift = {(3,0)}]
            \node at (0,0) (M2) {$M_g$};
            \node at (0,-2) (D2) {$\Delta_{g^{-1},h}$};
            \node at (0,-1) (S2) {$S_{g^{-1}}$};
            \draw[->] (M2) -- ++(-1,0);
            \draw[<-] (D2) -- ++(-1,0);
            \draw[<-] (M2) -- ++(1,0);
            \draw[->] (D2) -- ++(1,0);
            \draw[->] (D2) -- (S2);
            \draw[->] (S2) -- (M2);
        \end{scope}
        \begin{scope}[shift = {(6,0)}]
            \node at (0,0) (M3) {$M_g$};
            \node at (0,-2) (D3) {$\Delta_{h,g}$};
            \draw[->] (M3) -- ++(-0.75,0);
            \draw[->] (D3) -- ++(-0.75,0);
            \draw[->] (D3) -- (M3);
            \draw[<-] (M3) -- ++(0.75,0);
            \draw[<-] (D3) -- ++(0.75,0);
        \end{scope}
        \begin{scope}[shift = {(9,0)}]
            \node at (0,0) (M4) {$M_g$};
            \node at (0,-2) (D4) {$\Delta_{h,g^{-1}}$};
            \node at (0,-1) (S4) {$S_{g^{-1}}$};
            \draw[->] (M4) -- ++(-1,0);
            \draw[->] (D4) -- ++(-1,0);
            \draw[<-] (M4) -- ++(1,0);
            \draw[<-] (D4) -- ++(1,0);
            \draw[->] (D4) -- (S4);
            \draw[->] (S4) -- (M4);
        \end{scope}
        \end{tikzpicture}
    \end{center}
\end{lem}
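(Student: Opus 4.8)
The plan is to read each ladder as a linear map — legitimate precisely because $H$ is of \emph{finite type}, so that a tensor on the finite-dimensional spaces $H_g$ is the same data as a linear map — and to recognize the antipode-free ladders as $G$-graded \emph{fusion} (Galois) operators. I would then exhibit explicit inverses built from the companion ladders and verify that the composites are the identity using only coassociativity, the antipode axiom, and the counit axiom.

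Concretely, the first top-row ladder is the map $L\colon H_g\otimes H_{gh}\to H_g\otimes H_h$, $x\otimes y\mapsto \sum x\,y'\otimes y''$ with $\Delta_{g,h}(y)=\sum y'\otimes y''$. Its inverse should be the second top-row ladder after renaming the free parameter $h\mapsto gh$, i.e.\ $a\otimes b\mapsto \sum a\,S_{g^{-1}}(b')\otimes b''$ with $\Delta_{g^{-1},gh}(b)=\sum b'\otimes b''\in H_{g^{-1}}\otimes H_{gh}$. To check this I compose the two, regroup the comultiplications by coassociativity in the precise form $(\mathrm{id}\otimes\Delta_{g^{-1},gh})\Delta_{g,h}=(\Delta_{g,g^{-1}}\otimes\mathrm{id})\Delta_{1,gh}$ (the instance $(g,h,k)\mapsto(g,g^{-1},gh)$ of Axiom 1), apply the second antipode axiom $M_g(\mathrm{id}\otimes S_{g^{-1}})\Delta_{g,g^{-1}}=i_g\epsilon$ to collapse the middle, and finish with the counit axiom $(\epsilon\otimes\mathrm{id})\Delta_{1,gh}=\mathrm{id}$. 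The opposite composite is symmetric. Thus the first two top-row ladders are mutually inverse for matching parameters, and the third and fourth are the left-handed mirror, handled identically after swapping the two legs of $M_g$. I expect these verifications to be routine once the group-element subscripts are pinned down; notably they use only the forward antipode, not involutivity.

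The real content — and where the \emph{involutory} hypothesis enters — is the bottom row, whose ladders are obtained from the top row by reversing the two horizontal legs of each multiplication node. This is \emph{not} formally automatic from the top row: reversing legs is a partial transpose in the $H_g$-slot, and a partial transpose of an invertible operator can fail to be invertible (for instance, the partial transpose of the swap has small rank), so one cannot simply cite duality. Instead I would identify each bottom-row ladder with a top-row-type ladder for one of the derived Hopf $G$-coalgebras $H^{op}$, $H^{cop}$, $H^{op,cop}$ introduced just before the lemma, and then invoke the top-row argument for those. The decisive point is that the construction of these derived structures requires the antipodes to be invertible — recall $S^{op}_g=(S_{g^{-1}})^{-1}$ — and involutivity supplies exactly this: the involutory axiom reads $S_{g^{-1}}\circ S_g=\mathrm{id}_{H_g}$, so each $S_g$ is a linear isomorphism with $S_g^{-1}=S_{g^{-1}}$, and $H^{op}$, $H^{cop}$, $H^{op,cop}$ are again finite-type (involutory) Hopf $G$-coalgebras to which the top-row computation applies verbatim.

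Finite type is used twice more: to turn tensors into maps, and to conclude that a constructed one-sided inverse between spaces of equal dimension is a genuine two-sided inverse (alternatively I verify both composites directly). The step I expect to be the main obstacle is the bottom row: correctly tracking how the leg reversal interacts with $M_g$ and the antipode so as to match each reversed ladder with its $op/cop$ avatar, and being careful that involutivity is invoked exactly at the point where an antipode must be inverted. Once the labels and this matching are fixed, every case reduces to the single axiom chase sketched above.
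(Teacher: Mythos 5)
Your handling of the first two top-row ladders coincides with the paper's proof: the paper verifies exactly the composite you describe (coassociativity in the instance $(g,g^{-1},gh)$, then $M_g(\mathrm{id}\otimes S_{g^{-1}})\Delta_{g,g^{-1}}=i_g\,\epsilon$, then the counit and unit axioms) and explicitly leaves all remaining cases to the reader, so on that case your sketch \emph{is} the paper's argument. Your observation that the bottom row is a partial transpose, and hence not formally a consequence of the top row, is also correct and is a point the paper does not address at all.

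However, two of your structural claims are wrong as stated and would derail the verification if followed literally. First, the third and fourth top-row ladders do \emph{not} follow from ``the forward antipode only'': in ladder 3 it is the $\Delta$-strand, not the $M$-strand, that is read right-to-left, so $M_g$ consumes the \emph{second} Sweedler leg of $\Delta_{h,g}$; after regrouping by coassociativity, the composite with ladder 4 reduces to $\sum y^{(2)}\,S_{g^{-1}}(y^{(1)})=\epsilon(y)\,i_g$ for $\Delta_{g^{-1},g}(y)=\sum y^{(1)}\otimes y^{(2)}$, which is the \emph{opposite} antipode identity (the antipode axiom of $H^{op}$) and requires $S_g^{-1}=S_{g^{-1}}$, i.e.\ involutivity. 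Second, the bottom-row ladders are maps $H_g^{*}\otimes H_{gh}\to H_g^{*}\otimes H_h$ and their analogues, and $H_g^{*}$ is not a component of $H^{op}$, $H^{cop}$, or $H^{op,cop}$, so they cannot literally be ``identified with top-row ladders for the derived structures'' as you propose; what actually makes them work is that transposition reverses products, $R_a^{*}R_b^{*}=R_{ab}^{*}$, so the composite of the two bottom ladders produces $R^{*}_{\sum S_{g^{-1}}(y^{(2)})y^{(1)}}$ and the same opposite antipode identity is what must be invoked. Both identities do hold under the involutory hypothesis, and note also that $\dim H_h$ and $\dim H_{gh}$ may differ (some components can vanish), so you must check both composites rather than appeal to equal dimensions. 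With these repairs your argument goes through, but as written the division of labor between ``routine'' and ``needs involutivity'' is misplaced.
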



\begin{proof} Here is an example; the rest are left up to the reader.
\begin{center}
    \begin{tikzpicture}
            \node at (0,0) {$M_g$};
            \node at (0,-2) {$\Delta_{g,h}$};
            \draw[->] (0,-1.7) -- (0,-0.3);
            \draw[->] (-0.8,0) -- (-0.4,0);
            \draw[->] (0.4,0) -- (1.1,0);
            \draw[->] (-1,-2) -- (-0.6,-2);
        \begin{scope}[shift = {(1.5,0)}]
            \node at (0,0) {$M_g$};
            \node at (0.1,-2) {$\Delta_{g^{-1},gh}$};
            \draw[->] (0.4,0) -- (0.8,0);
            \draw[->] (-1,-2) -- (-0.6,-2);
            \draw[->] (0.8,-2) -- (1.2,-2);
            \node at (0,-1) {$S_{g^{-1}}$};
            \draw[->] (0,-1.7) -- (0,-1.3);
            \draw[->] (0,-0.7) -- (0,-0.3);
            \node at (2,-1) {$=$};
        \end{scope}
        \begin{scope}[shift = {(7.5,0)}]
            \node at (0.4,0) {$M_g$};
            \node at (-2,-2) {$\Delta_{1,h}$};
            \draw[->] (-2.8,-2) -- (-2.4,-2);
            \draw[->] (-2.8,0) -- (-0.1,0);
            \draw[->] (-1.6,-2) -- (1.2,-2);
            \draw[->] (0.8,0) -- (1.2,0);
        \begin{scope}[shift = {(-1.7,-1)}]
            \node at (-0.3,0) {$\Delta_{g,g^{-1}}$};
            \node at (1,-0.5) {$S_{g^{-1}}$};
            \node at (2.1,0) {$M_g$};
            \draw[->] (-0.3,-0.7) -- (-0.3,-0.3);
            \draw[->] (0.3,-0.1) -- (0.6,-0.3);
            \draw[->] (1.35,-0.35) -- (1.7,-0.1);
            \draw[->] (0.3,0.1) .. controls +(0.25,0.25) and +(-0.25,0.25) .. (1.7,0.1);
            \draw[->] (2.1,0.3) -- (2.1,0.7);
        \end{scope}
        \end{scope}
        \end{tikzpicture}\vspace{0.5cm}

        \begin{tikzpicture}
        \node at (2.4,-1) {$=$};
        \begin{scope}[shift = {(3.5,0)}]
            \draw[->] (0,0) -- (1,0);
            \draw[->] (0,-2) -- (1,-2);
        \end{scope}
    \end{tikzpicture}
\end{center}

\end{proof}

\begin{dff}
    A \emph{left $G$-integral} in a Hopf $G$-coalgebra $H$ is a family of $\Bbbk$-linear forms $\mu_L = \{\mu_{L,g}\}_{g\in G}$, $\mu_{L,g}\in H_g^*$ such that for any $g,h\in G$,
    \begin{center}
        \begin{tikzpicture}
            \node at (-0.2,0) {$\Delta_{g,h}$};
                \foreach \angle in {30,-30}
                {
                    \draw[->] (\angle:0.3) -- (\angle:0.6);
                }
                \draw[->] (-1,0) -- (-0.7,0);
                \node at (1,-0.4) {$\mu_{L,h}$};
                \node at (1.6,0) {$=$};
                \draw[->] (2.3,0) -- (2.6,0);
                \node at (3.2,0) {$\mu_{L,gh}$};
                \node at (4.1,0) {$i_g$};\
                \draw[->] (4.4,0) -- (4.7,0);
        \end{tikzpicture}
    \end{center}

    Similarly one can define a \emph{right $G$-integral}. Since $H_1$ forms a Hopf algebra, we can define \emph{left} (resp. right) \emph{cointegrals}, which are elements $e_L\in H_1$ 
    \begin{center}
        \begin{tikzpicture}
            \node at (0.1,0) {$M_1$};
                \foreach \angle in {150,210}
                {
                    \draw[->] (\angle:0.6) -- (\angle:0.3);
                }
                \draw[->] (0.5,0) -- (0.8,0);
                \node at (-0.9,-0.4) {$e_L$};
                \node at (1.4,0) {$=$};
                \draw[->] (2,0) -- (2.3,0);
                \node at (2.6,0) {$\epsilon$};
                \node at (3.1,0) {$e_L$};\
                \draw[->] (3.4,0) -- (3.7,0);
        \end{tikzpicture}
    \end{center}
\end{dff}


We will say that a left (resp. right) $G$-integral for $H$ is \emph{nonzero} if $\mu_{L,g}$ is nonzero for some $g\in G$. By \cite[Lemma 3.1]{V02}, any non-zero left (resp. right) $G$-integral has $\mu_{L,1} \neq 0$, and hence $\mu_{L,g}\neq 0$ for all $g$ with $H_g \neq 0$. The space of left (resp. right) $G$-integrals in a finite-dimensional Hopf $G$-coalgebra is one-dimensional (\cite[Theorem 3.6]{V02}) and it is clear that for an involutory Hopf $G$-algebra, every right $G$-integral is also a left $G$-integral.

A Hopf $G$-coalgebra is said to be \emph{semisimple} if each algebra $H_g$ is semisimple. When $H$ is of finite type, then $H$ is semisimple if and only if $H_1$ is semisimple (\cite[Lemma 5.1]{V02}).

\begin{dff}
    A \emph{right $G$-comodule} over a Hopf $G$-coalgebra $H$ is a family $R = \{R_g\}_{g\in G}$ of vector spaces endowed with a family $\Delta^R = \{\Delta^R_{g,h}:R_{gh}\to R_g\otimes H_h\}_{g,h\in G}$ of $\Bbbk$-linear maps such that
    \begin{center}
        \begin{tikzpicture}
            \node at (0.2,0) {$\Delta^R_{g,h}$};
            \node at (1.5,0) {$\Delta^R_{gh,k}$};
            \draw[<-] (0.6,0) -- (1,0);
            \draw[<-] (-0.5,0.5) -- (-0.2,0.2);
            \draw[<-] (-0.5,-0.5) -- (-0.2,-0.2);
            \draw[<-] (2,0) -- (2.4,0);
            \draw[<-] (0.8,0.5) -- (1.1,0.2);
            \node at (3,0) {$=$};
            \begin{scope}[shift = {(4.5,0)}]
            \node at (0.2,0) {$\Delta_{h,k}$};
            \node at (1.5,0) {$\Delta^R_{g,hk}$};
            \draw[<-] (0.6,0) -- (1,0);
            \draw[<-] (-0.5,0.5) -- (-0.2,0.2);
            \draw[<-] (-0.5,-0.5) -- (-0.2,-0.2);
            \draw[<-] (2,0) -- (2.4,0);
            \draw[<-] (0.8,-0.5) -- (1.1,-0.2);
            \end{scope}
        \end{tikzpicture}
    \end{center}
    \begin{center}
        \begin{tikzpicture}
            \node at (0.2,0) {$\Delta^R_{g,1}$};
            \node at (-0.7,0.7) {$\epsilon$};
            \draw[<-] (0.6,0) -- (1,0);
            \draw[<-] (-0.5,0.5) -- (-0.2,0.2);
            \draw[<-] (-0.5,-0.5) -- (-0.2,-0.2);
            \node at (1.6,0) {$=$};
            \draw[<-] (2.2,0) -- (3.2,0);
        \end{tikzpicture}
    \end{center}

    A \emph{G-subcomodule} of $R$ is a family $Q = \{Q_g\}_{g\in G}$ where $Q_g$ is a subspace of $R_g$ and $\Delta^R_{g,h}(Q_{gh})\subset Q_g\otimes H_h$ for all $g,h\in G$.

    We say that a right $G$-comodule $R$ is \emph{simple} if it is nonzero and has no $G$-subcomodules other than 0 and itself. $R$ is \emph{cosemisimple} if it is a direct sum of simple right $G$-comodules.
\end{dff}

A Hopf $G$-coalgebra $H$ is called \emph{cosemisimple} if it is cosemisimple as a right $G$-comodule over itself with $\Delta^H_{g,h} = \Delta_{g,h}$. By \cite[Theorem 5.4]{V02}, $H$ is cosemisimple if and only if there exists a right $G$-integral $\mu$ with $\mu_{g}(i_g) = 1$ for all $g\in G$ for which $H_g\neq 0$. As a corollary, if $H$ is of finite type, then $H$ is cosemisimple if and only if $H_1$ is cosemisimple.

 In fact, by \cite[Corollary 5.6]{V02}, if $\dim(H_1)\neq 0$, then a finite type involutory Hopf $G$-coalgebra is semisimple and cosemisimple. In particular, $\dim(H_1)\neq 0$ if $\Bbbk$ is characteristic 0.

Let $\mu$ be a nonzero two-sided $G$-integral in a finite type involutory Hopf $G$-coalgebra, and let $e$ be a nonzero cointegral in $H_1$ such that $\mu_1(e) = 1$, then we will use the following abbreviations.

\begin{center}
\begin{tikzpicture}
            \node at (0.1,0) {$M_g$};
                \foreach \angle in {90,120,150,240}
                {
                    \draw[->] (\angle:0.6cm) -- (\angle:0.3cm);
                }
                \foreach \angle in {175,195,215}
                {
                    \filldraw (\angle:0.45cm) circle [radius = 0.5pt];
                }
            \node at (0.9,0) {$=$};
            \begin{scope}[shift = {(2,0)}]
            \node at (0.1,0) {$M_g$};
            \draw[->] (0.5,0) -- (0.8,0);
            \foreach \angle in {90,120,150,240}
            {
                \draw[->] (\angle:0.6cm) -- (\angle:0.3cm);
            }
            \foreach \angle in {175,195,215}
            {
                \filldraw (\angle:0.45cm) circle [radius = 0.5pt];
            }
            \node at (1.1,0) {$\mu_g$};
                
            \end{scope}
    \begin{scope}[shift = {(6,0)}]
            \node at (0.5,0) {$\Delta_{g_1,\dots ,g_n}$};
                \foreach \angle in {90,120,150,240}
                {
                    \draw[<-] (\angle:0.6cm) -- (\angle:0.3cm);
                }
                \foreach \angle in {175,195,215}
                {
                    \filldraw (\angle:0.45cm) circle [radius = 0.5pt];
                }
            \node at (1.8,0) {$=$};
            \begin{scope}[shift = {(3,0)}]
                \node at (0.5,0) {$\Delta_{g_1,\dots,g_n}$};
            \draw[<-] (1.3,0) -- (1.6,0);
            \foreach \angle in {90,120,150,240}
            {
                \draw[<-] (\angle:0.6cm) -- (\angle:0.3cm);
            }
            \foreach \angle in {175,195,215}
            {
                \filldraw (\angle:0.45cm) circle [radius = 0.5pt];
            }
            \node at (1.8,0) {$e$};
            \end{scope}
        \end{scope}
    \end{tikzpicture}
\end{center}

whenever $g_1\cdots g_n = 1$.

In fact, $\mu_g$ and $e$ are cyclic, which justifies these abbreviations.

\begin{lem}
    Let $H$ be an involutory Hopf $G$-coalgebra and let $\mu$ and $e$ be a nonzero $G$-integral and a nonzero cointegral, respectively, then
    \begin{center}
        \begin{tikzpicture}
            \node at (-2.4,0) {$e$};
            \draw[->] (-2.2,0) -- (-1.8,0);
            \node at (-1.4,0) {$\mu_1$};
            \draw[->] (-0.6,0) -- (-0.3,0);
            \node at (0,0) {$S_g$};
            \draw[->] (0.3,0) -- (0.6,0);
            \node at (1.4,0) {$=$};
            \begin{scope}[shift = {(2.6,0)}]
                \draw[->] (-0.1,0) -- (0.3,0);
                \node at (-0.3,0) {$e$};
                \node at (1,0) {$\Delta_{g,g^{-1}}$};
                \draw[->] (1.6,0) -- (2,0);
                \node at (2.4,0) {$M_g$};
                \draw[->] (2.8,0) -- (3.2,0);
                \node at (3.6,0) {$\mu_g$};
                \draw[->] (1.2,-1) -- (2.1,-0.2);
                \draw[->] (1.2,-0.2) -- (2.1,-1);
            \end{scope}
        \end{tikzpicture}
    \end{center}
\end{lem}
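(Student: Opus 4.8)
The plan is to read both sides as $\Bbbk$-linear maps $H_g \to H_{g^{-1}}$ and show they agree. The left-hand diagram is the map $x \mapsto \mu_1(e)\, S_g(x)$, and the right-hand diagram is $x \mapsto \sum \mu_g(e_{(1)} x)\, e_{(2)}$, where $\Delta_{g,g^{-1}}(e) = \sum e_{(1)} \otimes e_{(2)}$ with $e_{(1)} \in H_g$ and $e_{(2)} \in H_{g^{-1}}$. Rather than transforming one diagram into the other, I would introduce a single auxiliary element and evaluate it in two ways, each evaluation collapsing to one of the two sides. Concretely, for $x \in H_g$ I use the iterated comultiplication $\Delta_{g,g^{-1},g}(x) = \sum x_{(1)} \otimes x_{(2)} \otimes x_{(3)}$, with $x_{(1)} \in H_g$, $x_{(2)} \in H_{g^{-1}}$, $x_{(3)} \in H_g$ (the grading product is $g\,g^{-1}g = g$, so this is legitimate), and set
\[
Y := \sum \mu_g\bigl(e_{(1)} x_{(1)}\bigr)\, e_{(2)}\, x_{(2)}\, S_g(x_{(3)}) \in H_{g^{-1}},
\]
so that $e_{(1)} x_{(1)} \in H_g$ feeds $\mu_g$ while $e_{(2)}\, x_{(2)}\, S_g(x_{(3)})$ is a product in $H_{g^{-1}}$.

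For the first evaluation (recovering the right-hand side), I would regroup the last two legs of $x$ by coassociativity as the comultiplication $\Delta_{g^{-1},g}$ of the second leg of $\Delta_{g,1}(x) = \sum x^a \otimes x^b$, and collapse them using the antipode axiom in the form $\sum x^b_{(1)} S_g(x^b_{(2)}) = \epsilon(x^b)\, i_{g^{-1}}$ (this is Axiom 3 with $g$ replaced by $g^{-1}$). Absorbing the unit $i_{g^{-1}}$ into $e_{(2)}$ and applying the counit axiom $\sum x^a \epsilon(x^b) = x$ inside $\mu_g$ yields $Y = \sum \mu_g(e_{(1)} x)\, e_{(2)}$, the right-hand side.

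For the second evaluation (recovering the left-hand side), I would instead regroup the first two legs via $\Delta_{1,g}(x) = \sum x^c \otimes x^d$ with $\Delta_{g,g^{-1}}(x^c) = \sum x^c_{(1)} \otimes x^c_{(2)}$. Since each $\Delta_{g,g^{-1}}$ is an algebra morphism, $\sum e_{(1)} x_{(1)} \otimes e_{(2)} x_{(2)} = \Delta_{g,g^{-1}}(e x^c)$, and the right $G$-integral property $\sum \mu_g(y_{(1)})\, y_{(2)} = \mu_1(y)\, i_{g^{-1}}$ applied to $y = e x^c \in H_1$ collapses the first three factors, leaving $Y = \sum \mu_1(e x^c)\, S_g(x^d)$. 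I would then invoke that $e$ is a two-sided cointegral, so $e x^c = \epsilon(x^c)\, e$, and the counit axiom $\sum \epsilon(x^c)\, x^d = x$, to obtain $Y = \mu_1(e)\, S_g(x)$, the left-hand side. Comparing the two evaluations proves the identity.

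The bulk of the argument is routine Sweedler/tensor-diagram manipulation; the one genuine input is that both the integral $\mu$ and the cointegral $e$ are two-sided, which is exactly where the involutory hypothesis enters — in the involutory finite-type setting $H_1$ is semisimple, hence unimodular, so the nonzero left cointegral is also a right cointegral (and likewise every nonzero $G$-integral is two-sided). The main obstacle I anticipate is purely the grading bookkeeping: one must choose the triple comultiplication $\Delta_{g,g^{-1},g}$ and its two coassociative regroupings so that at each step the antipode lands in $H_{g^{-1}}$ and the integral lands in $H_1$; with any other grading pattern neither the antipode axiom nor the integral identity would apply.
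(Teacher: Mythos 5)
Your proof is correct and is essentially the paper's argument in Sweedler notation: your auxiliary element $Y$ is exactly the right-hand side with the paper's ladder attached and then composed with the antipode, and your two evaluations of $Y$ correspond to the paper's two steps of simplifying the laddered diagram (via the bialgebra compatibility, the right $G$-integral property, and the cointegral property) and cancelling the ladder via the antipode axiom. The only organizational difference is that you inline the antipode-cancellation that proves the ladder invertible instead of citing the ladder lemma, and you make explicit the two-sidedness of $\mu$ and $e$ that the paper uses tacitly.
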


\begin{proof}
    We begin by putting a ladder on the right-hand side:

    \begin{center}
        \begin{tikzpicture}
                \draw[->] (-0.1,0) -- (0.3,0);
                \node at (-0.3,0) {$e$};
                \node at (1,0) {$\Delta_{g,g^{-1}}$};
                \draw[->] (1.6,0) -- (2,0);
                \node at (2.4,0) {$M_g$};
                \draw[->] (2.8,0) -- (3.2,0);
                \node at (3.6,0) {$\mu_g$};
                \draw[->] (1.2,-1) -- (2.1,-0.2);
                \draw[->] (1.2,-0.2) -- (2.1,-1);
                \node at (1,-1.2) {$\Delta_{g,g^{-1}}$};
                \draw[->] (1.6,-1.2) -- (2,-1.2);
                \node at (2.6,-1.2) {$M_{g^{-1}}$};
                \draw[->] (-0.1,-1.2) -- (0.3,-1.2);
                \draw[->] (3.2,-1.2) -- (3.6,-1.2);
                \node at (4.7,-0.6) {$=$};
                \begin{scope}[shift = {(6,0)}]
                    \draw[->] (-0.1,0) -- (0.6,-0.4);
                    \draw[->] (-0.1,-1.2) -- (0.6,-0.8);
                \node at (-0.3,0) {$e$};
                \node at (2.4,-0.6) {$\Delta_{g,g^{-1}}$};
                \draw[->] (1.4,-0.6) -- (1.8,-0.6);
                \node at (1,-0.6) {$M_1$};
                \draw[->] (2.7,-0.4) -- (3.2,0);
                \node at (3.6,0) {$\mu_g$};
                \draw[->] (2.7,-0.8) -- (3.2,-1.2);
                \end{scope}
                \end{tikzpicture}\vspace{0.5cm}
                \begin{tikzpicture}
                \node at (4.5,-0.6) {$=$};
                \node at (6,0) {$e$};
                \draw[->] (6.2,0) -- (6.6,0);
                \node at (7,0) {$\mu_1$};
                \node at (6,-1.2) {$\epsilon$};
                \draw[->] (5.4,-1.2) -- (5.8,-1.2);
                \node at (7,-1.2) {$i_{g^{-1}}$};
                \draw[->] (7.3,-1.2) -- (7.7,-1.2);
        \end{tikzpicture}
    \end{center}

    Applying the inverse ladder then yields the result.
\end{proof}

\begin{cor}\label{trace_antipode_inv}
    $\mu_{g^{-1}}\circ S_g = \mu_g$ and  $S_1\circ e = e$
\end{cor}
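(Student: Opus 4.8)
The plan is to read both identities directly off the preceding lemma, which expresses $S_g$ as a composite built from $e$, $\Delta_{g,g^{-1}}$, $M_g$, and $\mu_g$; on elements this reads
\[
S_g(x) = \sum \mu_g(x\, e'_g)\, e'_{g^{-1}},
\]
where $\Delta_{g,g^{-1}}(e) = \sum e'_g \otimes e'_{g^{-1}}$ with $e'_g \in H_g$ and $e'_{g^{-1}} \in H_{g^{-1}}$ (the side on which the product is taken being the one fixed by the crossing in the lemma's diagram), and where the normalization $\mu_1(e) = 1$ has been used. Both parts of the corollary then amount to capping off this diagram: with an integral for the first identity, and by feeding in $e$ and invoking uniqueness for the second.

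First I would prove $\mu_{g^{-1}} \circ S_g = \mu_g$. Post-composing the lemma's identity with $\mu_{g^{-1}}$ on the free output leg gives $\mu_{g^{-1}}(S_g(x)) = \sum \mu_g(x\, e'_g)\,\mu_{g^{-1}}(e'_{g^{-1}})$. Now $\sum \mu_{g^{-1}}(e'_{g^{-1}})\, e'_g$ is precisely $(\mathrm{id}_{H_g} \otimes \mu_{g^{-1}})\,\Delta_{g,g^{-1}}(e)$, and since $\mu$ is a two-sided $G$-integral the integral axiom collapses $\Delta_{g,g^{-1}}$ against $\mu_{g^{-1}}$ to the unit: $(\mathrm{id}_{H_g}\otimes \mu_{g^{-1}})\,\Delta_{g,g^{-1}}(e) = \mu_1(e)\, i_g = i_g$. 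Substituting this back and using the unit axiom for $M_g$ yields $\mu_{g^{-1}}(S_g(x)) = \mu_g(x\, i_g) = \mu_g(x)$. Diagrammatically this is just sliding the $\mu_{g^{-1}}$ cap down until it meets $\Delta_{g,g^{-1}}$ and applying the $G$-integral abbreviation.

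For $S_1 \circ e = e$ I would argue by uniqueness of the cointegral. Since $e$ is a two-sided cointegral in the Hopf algebra $H_1$ and $S_1$ is a bijective anti-automorphism (bijectivity coming from involutivity) with $\epsilon \circ S_1 = \epsilon$, applying $S_1$ to the cointegral relation $x e = \epsilon(x)\, e$ shows that $S_1(e)$ is again a two-sided cointegral of $H_1$. As the space of cointegrals in the finite-dimensional Hopf algebra $H_1$ is one-dimensional, $S_1(e) = \lambda e$ for some $\lambda \in \Bbbk$. To identify $\lambda$, apply $\mu_1$: on one hand $\mu_1(S_1(e)) = \lambda\, \mu_1(e) = \lambda$, while the case $g=1$ of the first identity gives $\mu_1(S_1(e)) = \mu_1(e) = 1$. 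Hence $\lambda = 1$ and $S_1(e) = e$.

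I expect the only real care to be in the $G$-grading bookkeeping: one must check that $\Delta_{g,g^{-1}}$ indeed lands in $H_g \otimes H_{g^{-1}}$ and that the integral axiom is applied with indices $(g, g^{-1})$ so that it produces $\mu_1(e)\, i_g$ in the correct component, and that the order of the product in the lemma (resolved by the crossing) is compatible with the leg on which $\mu_{g^{-1}}$ is inserted --- if the other order occurs, the same computation runs with $\mu_g(i_g\, x)$ in place of $\mu_g(x\, i_g)$. Beyond this formal manipulation, the second identity rests only on the one-dimensionality of the integral space of $H_1$ recorded above, together with the fact that involutivity makes $S_1$ invertible, so that $S_1(e)$ is a cointegral on both sides.
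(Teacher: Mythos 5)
Your proof is correct and is essentially the intended one: the paper states this as an immediate corollary of the preceding lemma, and your first identity follows exactly as you describe by capping the output leg with $\mu_{g^{-1}}$ and collapsing $(\mathrm{id}\otimes\mu_{g^{-1}})\Delta_{g,g^{-1}}(e)=\mu_1(e)\,i_g$ via the $G$-integral axiom. For the second identity you use uniqueness of cointegrals plus the normalization $\mu_1(e)=1$, which is a valid minor variant of the equally quick direct route (feed $e$ into the lemma's formula for $S_1$ and use the cointegral property together with the counit axiom); either way the bookkeeping you flag checks out.
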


Dual to the notion of Hopf $G$-coalgebras is a Hopf $G$-algebra.
\begin{dff}
     A \emph{Hopf $G$-algebra} $H$ (over $\Bbbk$) is a family of $\Bbbk$-coalgebras $\{(H_g, \Delta_g, \epsilon_g)\}_{g\in G}$ endowed with a family $\{M_{g,h}:H_g\otimes H_h\to H_{gh}\}_{g,h\in G}$ of coalgebra morphisms called the \emph{multiplication}, a coalgebra morphism $i:\Bbbk\to H_1$ called the \emph{unit}, and a family $\{S_g:H_g\to H_{g^{-1}}\}_{g\in G}$ of $\Bbbk$-linear maps called the \emph{antipode} such that it satisfies the dual variants of the axioms of a Hopf $G$-coalgebra. Namely, the associativity, unit, and antipode axioms.

     A \emph{morphism} of Hopf $G$-algebras is a family of coalgebra morphisms which intertwines the multiplication, unit, and antipode.
\end{dff}

One can define left and right $G$-cointegrals for Hopf $G$-algebras analogously to left and right $G$-integrals for Hopf $G$-coalgebras. They satisfy dual versions of the results above.

There is a duality between Hopf $G$-algebras and Hopf $G$-coalgebras. Namely, if $H$ is a Hopf $G$-algebra, then we can obtain a Hopf $G$-coalgebra $\tilde{H}$ in the following way: set $\tilde{H}_g = (H_g)^*$, and let the structure tensors be \[\tilde{M}_g = (\Delta_g)^*,\qquad \tilde{i}_g = (\epsilon_g)^*\] \[\tilde{\Delta}_{g,h} = (M_{g,h})^*, \qquad \tilde{\epsilon} = i^*\] \[\tilde{S}_g = (S_g)^*\] We denote $\tilde H$ by $H^*$. We can similarly obtain a Hopf $G$-algebra from taking the dual of a Hopf $G$-coalgebra.

\subsection{\texorpdfstring{Crossed Hopf $G$-Coalgebras}{Crossed Hopf G-Coalgebras}}

\begin{dff}
    A Hopf $G$-coalgebra is said to be \emph{crossed} if it is endowed with a family $\phi = \{\phi_h: H_g\to H_{hgh^{-1}}\}_{g,h\in G}$  of algebra isomorphisms called the \emph{crossing} such that 
    \begin{center}
        \begin{tikzpicture}
            \node at (-0.1,0) {$\Delta_{g,k}$};
            \draw[->] (30:0.4) -- (30:0.8);
            \draw[->] (-30:0.4) -- (-30:0.8);
            \draw[->] (-0.9,0) -- (-0.5,0);
            \node at (1,0.5) {$\phi_h$};
            \node at (1,-0.5) {$\phi_h$};
            \draw[->] (1.4,0.5) -- (1.8,0.5);
            \draw[->] (1.4,-0.5) -- (1.8,-0.5);
            \node at (2.5,0) {$=$};
            \begin{scope}[shift ={(4,0)}]
                \node at (0,0) {$\phi_h$};
                \draw[->] (-0.8,0) -- (-0.4,0);
                \draw[->] (0.4,0) -- (0.8,0);
                \node at (2,0) {$\Delta_{hgh^{-1},hkh^{-1}}$};
                \begin{scope}[shift = {(2.7,0)}]
                    \draw[->] (30:0.4) -- (30:0.8);
                    \draw[->] (-30:0.4) -- (-30:0.8);
                \end{scope}
            \end{scope}
        \end{tikzpicture}\vspace{0.5cm}

        \begin{tikzpicture}
            \node at (0,0) {$\phi_1$};
            \draw[->] (-0.8,0) -- (-0.4,0);
            \draw[->] (0.4,0) -- (0.8,0);
            \node at (1.1,0) {$\epsilon$};
            \node at (1.9,0) {$=$};
            \node at (3.4,0) {$\epsilon$};
            \draw[->] (2.7,0) -- (3.1,0);
        \end{tikzpicture}
    \end{center}

    and satisfies $\phi_{gh} = \phi_g\phi_h$ for all $g,h\in G$.
\end{dff}

It is easily verified that $\phi_h$ also satisfies 
    \begin{center}
        \begin{tikzpicture}
            \node at (0,0) {$\phi_h$};
            \draw[->] (-0.8,0) -- (-0.4,0);
            \draw[->] (0.4,0) -- (0.8,0);
            \node at (-1.1,0) {$S_g$};
            \draw[->] (-1.9,0) -- (-1.5,0);
            \node at (1.5,0) {$=$};
            \draw[->] (2.3,0) -- (2.7,0);
            \node at (3.1,0) {$\phi_h$};
            \draw[->] (3.4,0) -- (3.8,0);
            \node at (4.5,0) {$S_{hgh^{-1}}$};
            \draw[->] (5.2,0) -- (5.6,0);
        \end{tikzpicture}
    \end{center}
    for all $g,h\in G$.

    When $G$ is abelian, any Hopf $G$-coalgebra $H$ is crossed with $\phi_h\vert_{H_g} = id_{H_g}$.

\subsection{\texorpdfstring{Hopf $G$-Triplets}{Hopf G-Triplets}}

\begin{dff}
    Let $G$ be a group with identity $1$. A \emph{Hopf pair of Hopf $G$-coalgebras} (abbreviated to Hopf pair) is a triple $(H,H',\langle\_\rangle)$ where $H$ and $H'$ are Hopf $G$-coalgebras and $\langle\_\rangle$ is a bilinear form $H_1\otimes H'_1\to \Bbbk$ on the identity sectors such that the induced map $H_1\to (H'_1)^{*,cop}$ is a Hopf algebra morphism. We say that a Hopf pair is of \emph{finite type} if each $H$ and $H'$ is of finite type. Additionally, we say it is \emph{involutory} if each $H$ and $H'$ is involutory.
\end{dff}

\begin{dff}
    Let $G$ be a group with identity $1$. A \emph{Hopf $G$-doublet} is a triple $(H,H',\langle \_\rangle)$ where $H$ is a Hopf $G$-algebra, $H'$ is a Hopf $G$-coalgebra, and $\langle \_\rangle $ is a bilinear form $H\otimes H'\to \Bbbk$ such that the induced map $H\to (H')^{*,cop}$ is a Hopf $G$-algebra morphism. We say a Hopf $G$-doublet is of \emph{finite type} if each $H$ and $H'$ is of finite type. Additionally, we say it is \emph{involutory} if each $H$ and $H'$ is involutory.
\end{dff}

\begin{rmk}
    We remark that if $G$ is the trivial group, the notion of a Hopf pair and a Hopf $G$-doublet agree and match the definition of a Hopf doublet in \cite[Definition 2.10]{CCC22}.
\end{rmk}

From here on we assume that all Hopf $G$-coalgebras and algebras are involutory and of finite type unless otherwise stated.

\begin{lem}
    Let $(H,H',\langle\_\rangle)$ be a Hopf pair or a Hopf $G$-doublet. Then the following are as well:
    \begin{itemize}
        \item $(H^{op},(H')^{cop},\langle\_\rangle')$ where $\langle\_\rangle'_g = \langle\_\rangle_{g^{-1}}$
        \item $(H^{cop},(H')^{op},\langle\_\rangle)$
    \end{itemize}
    If $(H,H',\langle\_\rangle)$ is a Hopf pair, then the following are as well:
    \begin{itemize}
        \item $(H^{op,cop},H',\langle\_\rangle)$
        \item $(H,(H')^{op,cop},\langle\_\rangle)$
    \end{itemize}
\end{lem}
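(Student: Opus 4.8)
The plan is to reduce everything to two elementary facts and then turn the crank. First, $op$ and $cop$ act functorially on morphisms: if $f$ is a morphism of Hopf algebras $A\to C$, then the \emph{same} underlying linear map is also a morphism $A^{op}\to C^{op}$ and $A^{cop}\to C^{cop}$, because $op$ (resp.\ $cop$) flips only the multiplication (resp.\ comultiplication) and it does so on source and target simultaneously. Second, dualization intertwines these operations via the standard identities $(A^{op})^*=(A^*)^{cop}$ and $(A^{cop})^*=(A^*)^{op}$, which one reads straight off the formulas $\tilde M_g=(\Delta_g)^*$ and $\tilde\Delta_{g,h}=(M_{g,h})^*$ from the duality recalled above. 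Throughout I keep careful track of the $G$-grading: for a Hopf $G$-coalgebra, $op$ preserves the grading while $cop$ sends the $g$-sector to the $g^{-1}$-sector, and dually for a Hopf $G$-algebra $op$ regrades while $cop$ does not. For a Hopf pair it therefore suffices to verify the morphism condition on the identity sectors (where $op$ and $cop$ reduce to the ordinary operations on the Hopf algebras $H_1$ and $H'_1$); for a Hopf $G$-doublet one checks sector by sector and must additionally confirm that the graded multiplication $M_{g,h}$, the unit, and the antipode $S_g$ conditions transform correctly.

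With these in hand the first two bullets are pure bookkeeping. For $(H^{cop},(H')^{op},\langle\_\rangle)$, apply $cop$-functoriality to the defining morphism $\lambda\colon H_1\to(H'_1)^{*,cop}$ (resp.\ $H\to(H')^{*,cop}$ in the doublet case): this yields a morphism $(H_1)^{cop}\to\bigl((H'_1)^{*,cop}\bigr)^{cop}=(H'_1)^{*}$, and since $\bigl((H')^{op}\bigr)^{*,cop}=(H'_1)^{*}$ by the duality identity, this is exactly the induced map of the new triple, with the form unchanged. For $(H^{op},(H')^{cop},\langle\_\rangle')$ I instead apply $op$-functoriality, use that $op$ and $cop$ commute, and land in $\bigl((H')^{cop}\bigr)^{*,cop}$; here the relabelling $\langle\_\rangle'_g=\langle\_\rangle_{g^{-1}}$ is precisely what is forced by the double regrading (both factors are sent $g\mapsto g^{-1}$ in the doublet case), and it reduces to the original form on the identity sector.

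The substantive step is the last two bullets, which are claimed only for a Hopf pair. Here one factor is untouched while the other is replaced by $H^{op,cop}$ (resp.\ $(H')^{op,cop}$), so naive functoriality applied to both sides would move the target from $(H'_1)^{*,cop}$ to $(H'_1)^{*,op}$ and cannot be invoked directly. The plan is to insert the antipode: in the involutory (hence invertible-antipode) setting, $S$ furnishes a Hopf algebra isomorphism $X^{op,cop}\xrightarrow{\sim} X$, and correspondingly $(S)^*$ gives an isomorphism $(H'_1)^{*,op}\xrightarrow{\sim}(H'_1)^{*,cop}$. Composing $\lambda$ with the appropriate antipode isomorphism produces a Hopf algebra morphism $(H_1)^{op,cop}\to(H'_1)^{*,cop}$, and the antipode-compatibility of the pairing (the relation $\langle S_1\_,\_\rangle=\langle\_,S_1\_\rangle$, which holds because $\lambda$ preserves antipodes) is used to match this against the induced map of $\langle\_\rangle$; involutivity $S_1^2=\mathrm{id}$ is what makes the two antipode insertions consistent.

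I expect the main obstacle to be exactly this last reconciliation: verifying that, after inserting the antipode isomorphism, one really recovers the induced map of the stated form rather than an $S$-twisted variant, and pinning down precisely where the involutory hypothesis is consumed. Concretely, the algebra- and coalgebra-morphism conditions for $A^{op,cop}$ differ from the original ones by a transposition of the two tensor factors, so the verification cannot proceed by formal manipulation of the pairing identities alone and genuinely requires the antipode relation $\langle S_1 x, y\rangle=\langle x, S_1 y\rangle$ together with $S_1^2=\mathrm{id}$. The secondary nuisance is the grading bookkeeping in the Hopf $G$-doublet case, where one must confirm that the regradings $g\mapsto g^{-1}$ produced by $cop$ on the coalgebra side and by $op$ on the algebra side are compatible with the relabelled form in every sector, not merely at the identity.
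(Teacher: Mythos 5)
Your treatment of the first two bullets is correct and is essentially the paper's argument: the duality identities $(A^{op})^* = A^{*,cop}$ and $(A^{cop})^* = A^{*,op}$, the fact that $op$ and $cop$ act on source and target of the induced map simultaneously, and the grading bookkeeping that forces $\langle\_\rangle'_g = \langle\_\rangle_{g^{-1}}$ in the doublet case (and makes $\langle\_\rangle' = \langle\_\rangle$ for a Hopf pair, since only the identity sector carries the form). The paper writes out the tensor-diagram version of exactly this computation for $(H^{op},(H')^{cop},\langle\_\rangle')$ and declares the remaining cases straightforward.

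The gap is in the last two bullets, and it sits precisely where you flagged it. The antipode gives a Hopf algebra isomorphism $S_1\colon (H_1)^{op,cop}\to H_1$, so transporting structure along it shows that $(H^{op,cop},H',\langle S_1(-),-\rangle) = (H^{op,cop},H',\langle -,S_1(-)\rangle)$ is a Hopf pair; it does not show that $(H^{op,cop},H',\langle\_\rangle)$ is one, and involutivity does not remove the twist. Concretely, the coalgebra-morphism condition for the original pair reads $\langle x, b'b\rangle = \sum\langle x_{(1)},b\rangle\langle x_{(2)},b'\rangle$, while for $(H^{op,cop},H',\langle\_\rangle)$ it reads $\langle x, bb'\rangle = \sum\langle x_{(1)},b\rangle\langle x_{(2)},b'\rangle$; these differ by the order of multiplication in $H'_1$, and neither the relation $\langle S_1x,b\rangle=\langle x,S_1b\rangle$ nor $S_1^2=\mathrm{id}$ converts one into the other. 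Your proposed route therefore cannot be completed as stated, and the obstruction is real rather than presentational: take $G=1$, let $H'_1=\Bbbk[\Gamma]$ for a nonabelian finite group $\Gamma$, let $H_1=(H'_1)^{*,cop}$ with the evaluation pairing, so the induced map is the identity. Then $(H_1)^{op,cop}=(H'_1)^{*,op}=(H'_1)^{*}$ (commutativity of $\Bbbk^\Gamma$), and the identity map $(H'_1)^{*}\to(H'_1)^{*,cop}$ fails to be a coalgebra morphism because $\Bbbk^\Gamma$ is not cocommutative. So the last two bullets require either twisting the pairing by $S_1$ or an additional symmetry hypothesis on $\langle\_\rangle$. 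For what it is worth, the paper's own proof does not address these two bullets beyond calling them ``straightforward to check,'' so there is no argument there that you missed; but you should treat the step you identified as a genuine gap rather than a routine verification.
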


\begin{proof}
    Consider $(H^{op}, (H')^{cop}, \langle \_\rangle ')$, for example. If $(H,H',\langle\_\rangle)$ is a Hopf pair, then $\langle\_\rangle' = \langle\_\rangle$ and the result follows from the fact that for any finite-dimensional Hopf algebra $A$, $(A^{cop})^* = A^{*,op}$.

    If $(H,H',\langle\_\rangle)$ is a Hopf $G$-doublet, then the following equalities hold:

    \begin{center}
        \begin{tikzpicture}
            \node at (-1,0) (M) {$M_{h^{-1},g^{-1}}$};
            \node at (1,0) (B) {$\langle\_\rangle_{h^{-1}g^{-1}}$};
            \draw[<-] (M) .. controls +(-1.5,-0.25) and +(0.5,0) .. ++(-2,0.5);
            \draw[<-] (M) .. controls +(-1.5 ,0.25) and +(0.5,0) .. ++(-2,-0.5);
            \draw[->] (M) -- (B);
            \draw[<-] (B) -- ++(1.25,0);
            \node at (3,0) {$=$};
            \begin{scope}[shift = {(5,0)}]
                \node at (0,0.5) (B1) {$\langle\_\rangle_{g^{-1}}$};
                \node at (0,-0.5) (B2) {$\langle\_\rangle_{h^{-1}}$};
                \node at (2,0) (D) {$\Delta_{h^{-1},g^{-1}}$};
                \draw[<-] (B1) -- ++(-1,0);
                \draw[<-] (B2) -- ++(-1,0);
                \draw[<-] (B1) -- (D);
                \draw[<-] (B2) -- (D);
                \draw[<-] (D) -- ++(1.5,0);
            \end{scope}
            \begin{scope}[shift = {(0,-2)}]
                \node at (-1,0) (M) {$M^{op}_{g,h}$};
            \node at (1,0) (B) {$\langle\_\rangle_{h^{-1}g^{-1}}$};
            \draw[<-] (M) -- ++(-1,0.5);
            \draw[<-] (M) -- ++(-1,-0.5);
            \draw[->] (M) -- (B);
            \draw[<-] (B) -- ++(1.25,0);
            \node at (3,0) {$=$};
            \begin{scope}[shift = {(5,0)}]
                \node at (0,0.5) (B1) {$\langle\_\rangle_{g^{-1}}$};
                \node at (0,-0.5) (B2) {$\langle\_\rangle_{h^{-1}}$};
                \node at (2,0) (D) {$\Delta^{cop}_{g,h}$};
                \draw[<-] (B1) -- ++(-1,0);
                \draw[<-] (B2) -- ++(-1,0);
                \draw[<-] (B1) .. controls +(0.75,-0.25) and +(-0.75,-0.25) .. (D);
                \draw[<-] (B2) .. controls +(0.75,0.25) and +(-0.75,0.25) .. (D);
                \draw[<-] (D) -- ++(1,0);
            \end{scope}
            \end{scope}
        \end{tikzpicture}
    \end{center}

    This is part of the conditions for the induced map $H^{op}\to ((H')^{cop})^{*,cop}$ from $\langle\_\rangle'$ to be a Hopf $G$-algebra morphism. One can easily check the other conditions are implied.

    The other Hopf $G$-doublets and Hopf pairs are straightforward to check as well.
\end{proof}

For all bilinear forms $\langle\_\rangle$, we will often use the symbol $\bullet$ instead for brevity.

For a Hopf pair $(H^\alpha,H^\beta,\langle\_\rangle)$ and any $g,h\in G$, we will use the following notation:
\begin{center}
    \begin{tikzpicture}
        \node at (0,0) (T) {$T^{\alpha\beta}_{g,h}$};
        \draw[->] (T) -- ++(1,0.25);
        \draw[->] (T) -- ++(1,-0.25);
        \draw[<-] (T) -- ++(-1,0.25);
        \draw[<-] (T) -- ++(-1,-0.25);
        \node at (1.5,0) {$=$};
        \begin{scope}[shift = {(3,0)}]
            \node at (0,1) (DU) {$\Delta^\beta_{g,1}$};
            \node at (0,-1) (DL) {$\Delta^\alpha_{1,h}$};
            \node at (0,0) (B) {$\bullet$};
            \draw[<-] (DU) -- ++(-1,0);
            \draw[<-] (DL) -- ++(-1,0);
            \draw[->] (DU) -- ++(1,0);
            \draw[->] (DL) -- ++(1,0);
            \draw[->] (DU) -- (B);
            \draw[->] (DL) -- (B);
        \end{scope}
        \begin{scope}[shift = {(7,0)}]
        \node at (0,0) (T) {$(T^{\alpha\beta}_{g,h})^{-1}$};
        \draw[->] (T) -- ++(1,0.25);
        \draw[->] (T) -- ++(1,-0.25);
        \draw[<-] (T) -- ++(-1,0.25);
        \draw[<-] (T) -- ++(-1,-0.25);
        \node at (1.5,0) {$=$};
        \begin{scope}[shift = {(3,0)}]
            \node at (0,1) (DU) {$\Delta^\beta_{g,1}$};
            \node at (0,-1) (DL) {$\Delta^\alpha_{1,h}$};
            \node at (0,0.3) (B) {$\bullet$};
            \node at (0,-0.3) (S) {$S^\alpha_1$};
            \draw[<-] (DU) -- ++(-1,0);
            \draw[<-] (DL) -- ++(-1,0);
            \draw[->] (DU) -- ++(1,0);
            \draw[->] (DL) -- ++(1,0);
            \draw[->] (DU) -- (B);
            \draw[->] (DL) -- (S);
            \draw[->] (S) -- (B);
        \end{scope}
        \end{scope}
    \end{tikzpicture}\vspace{0.5cm}

    \begin{tikzpicture}
        \node at (0,0) (U) {$U^{\alpha\beta}_{g,h}$};
        \draw[->] (U) -- ++(1,0.25);
        \draw[->] (U) -- ++(1,-0.25);
        \draw[<-] (U) -- ++(-1,0.25);
        \draw[<-] (U) -- ++(-1,-0.25);
        \node at (1.5,0) {$=$};
        \begin{scope}[shift = {(3.75,0)}]
            \node at (0,0) (TI) {$(T^{\alpha\beta}_{g^{-1},h})^{-1}$};
            \node at (2,0) (T) {$T^{\alpha\beta}_{g,h^{-1}}$};
            \node at (-0.5,1) (SUL) {$S^\beta_g$};
            \node at (1,1) (SUR) {$S^\beta_{g^{-1}}$};
            \node at (1,-1) (SBL) {$S^\alpha_h$};
            \node at (2.5,-1) (SBR) {$S^\alpha_{h^{-1}}$};
            \draw[<-] (SUL) -- ++(-1,0);
            \draw[<-] (TI) -- ++(-1.5,-1);
            \draw[->] (SUL) -- (TI);
            \draw[->] (TI) -- (SUR);
            \draw[->] (SUR) -- (T);
            \draw[->] (TI) -- (SBL);
            \draw[->] (SBL) -- (T);
            \draw[->] (T) -- (SBR);
            \draw[->] (T) -- ++(1.5,1);
            \draw[->] (SBR) -- ++(1,0);
        \end{scope}
    \end{tikzpicture}\vspace{0.5cm}

    \begin{tikzpicture}
        \node at (0,0) (V) {$V^{\alpha\beta}_{g,h}$};
        \draw[->] (V) -- ++(1,0.25);
        \draw[->] (V) -- ++(1,-0.25);
        \draw[<-] (V) -- ++(-1,0.25);
        \draw[<-] (V) -- ++(-1,-0.25);
        \node at (1.5,0) {$=$};
        \begin{scope}[shift = {(4,0)}]
            \node at (0.5,0) (T) {$T^{\alpha\beta}_{g^{-1},h^{-1}}$};
            \node at (-1,0.5) (SUL) {$S^\beta_g$};
            \node at (-1,-0.5) (SBL) {$S^\alpha_h$};
            \node at (2,0.5) (SUR) {$S^\beta_{g^{-1}}$};
            \draw[<-] (SUL) -- ++(-1,0);
            \draw[<-] (SBL) -- ++(-1,0);
            \draw[->] (SUL) -- (T);
            \draw[->] (SBL) -- (T);
            \draw[->] (T) -- (SUR);
            \draw[->] (SUR) -- ++(1,0);
            \draw[->] (T) -- ++(2.5,-0.5);
        \end{scope}
    \end{tikzpicture}
\end{center}

\begin{lem}\label{TVU_lem}
    Let $(H^\alpha, H^\beta, \langle\_\rangle)$ be an involutory Hopf pair. Then for any $g,h\in G$ the tensors above satisfy the following relations
    \begin{center}
        \begin{tikzpicture}
            \node at (0,0) (V) {$V^{\alpha\beta}_{g,h}$};
            \node at (1.5,0) (U) {$U^{\alpha\beta}_{g,h^{-1}}$};
            \draw[<-] (V) -- ++(-1,0.25);
            \draw[<-] (V) -- ++(-1,-0.25);
            \draw[->] (V) .. controls +(0.5,0.5) and +(-0.5,0.5).. (U);
            \draw[->] (V) .. controls +(0.5,-0.5) and +(-0.5,-0.5) .. (U);
            \draw[->] (U) -- ++(1,0.25);
            \draw[->] (U) -- ++(1,-0.25);
            \node at (3.25,0) {$=$};
            \begin{scope}[shift = {(5,0)}]
                \node at (0,0) (T) {$T^{\alpha\beta}_{g,h}$};
                \node at (1,-0.5) (S) {$S^\alpha_h$};
                \draw[<-] (T) -- ++(-1,0.25);
                \draw[<-] (T) -- ++(-1,-0.25);
                \draw[->] (T) -- (S);
                \draw[->] (T) -- ++(2,0.25);
                \draw[->] (S) -- ++(1,0);
            \end{scope}
        \end{tikzpicture}\vspace{0.5cm}

        \begin{tikzpicture}
            \node at (-0.25,0) (T1) {$T^{\alpha\beta}_{g,h}$};
            \node at (2,0) (TI1) {$(T^{\alpha\beta}_{g^{-1},h^{-1}})^{-1}$};
            \node at (0.75,1) (SUL) {$S^\beta_g$};
            \node at (0.75,-1) (SBL) {$S^\alpha_h$};
            \node at (3.5,1) (SUR) {$S^\beta_{g^{-1}}$};
            \node at (3.5,-1) (SBR) {$S^\alpha_{h^{-1}}$};
            \draw[<-] (T1) -- ++(-0.75,0.5);
            \draw[<-] (T1) -- ++(-0.75,-0.5);
            \draw[->] (T1) -- (SUL);
            \draw[->] (T1) -- (SBL);
            \draw[->] (SUL) -- (TI1);
            \draw[->] (SBL) -- (TI1);
            \draw[->] (TI1) -- (SUR);
            \draw[->] (TI1) -- (SBR);
            \draw[->] (SUR) -- ++(0.75,0);
            \draw[->] (SBR) -- ++(0.75,0);
            \node at (5,0) {$=$};
            \begin{scope}[shift = {(8,0)}]
            \node at (-0.25,0) (TI1) {$(T^{\alpha\beta}_{g^{-1},h^{-1}})^{-1}$};
            \node at (2,0) (T1) {$T^{\alpha\beta}_{g,h}$};
            \node at (-1.5,1) (SUL) {$S^\beta_g$};
            \node at (-1.5,-1) (SBL) {$S^\alpha_h$};
            \node at (1,1) (SUR) {$S^\beta_{g^{-1}}$};
            \node at (1,-1) (SBR) {$S^\alpha_{h^{-1}}$};
            \draw[->] (T1) -- ++(0.75,0.5);
            \draw[->] (T1) -- ++(0.75,-0.5);
            \draw[<-] (T1) -- (SUR);
            \draw[<-] (T1) -- (SBR);
            \draw[->] (SUL) -- (TI1);
            \draw[->] (SBL) -- (TI1);
            \draw[->] (TI1) -- (SUR);
            \draw[->] (TI1) -- (SBR);
            \draw[<-] (SUL) -- ++(-0.6,0);
            \draw[<-] (SBL) -- ++(-0.6,0);
            \end{scope}
        \end{tikzpicture}
    \end{center}
\end{lem}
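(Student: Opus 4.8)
The plan is to treat the two identities separately, since the first is a purely formal consequence of invertibility and involutivity while the second genuinely needs the Hopf-pairing axioms. For the first identity I would begin by unwinding the defining diagrams of $U^{\alpha\beta}_{g,h^{-1}}$ and $V^{\alpha\beta}_{g,h}$ as composites of $T^{\alpha\beta}$-ladders conjugated by antipodes. Reading each diagram from input to output gives
\[ V^{\alpha\beta}_{g,h} = (S^\beta_{g^{-1}} \otimes \mathrm{id}) \circ T^{\alpha\beta}_{g^{-1},h^{-1}} \circ (S^\beta_g \otimes S^\alpha_h), \]
\[ U^{\alpha\beta}_{g,h^{-1}} = (\mathrm{id} \otimes S^\alpha_h) \circ T^{\alpha\beta}_{g,h} \circ (S^\beta_{g^{-1}} \otimes S^\alpha_{h^{-1}}) \circ (T^{\alpha\beta}_{g^{-1},h^{-1}})^{-1} \circ (S^\beta_g \otimes \mathrm{id}). \]
Composing $U^{\alpha\beta}_{g,h^{-1}} \circ V^{\alpha\beta}_{g,h}$ then produces three cancelling blocks in succession: the adjacent pair $(S^\beta_g \otimes \mathrm{id}) \circ (S^\beta_{g^{-1}} \otimes \mathrm{id})$ collapses by the involutory axiom $S^\beta_g \circ S^\beta_{g^{-1}} = \mathrm{id}$; the factor $(T^{\alpha\beta}_{g^{-1},h^{-1}})^{-1} \circ T^{\alpha\beta}_{g^{-1},h^{-1}}$ collapses because the ladder $T$ is invertible by the preceding lemma; and finally $(S^\beta_{g^{-1}} \otimes S^\alpha_{h^{-1}}) \circ (S^\beta_g \otimes S^\alpha_h)$ collapses again by involutivity. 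What survives is exactly $(\mathrm{id} \otimes S^\alpha_h) \circ T^{\alpha\beta}_{g,h}$, the right-hand side; no pairing axiom enters here.

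For the second identity I would observe that, writing $A := (S^\beta_{g^{-1}} \otimes S^\alpha_{h^{-1}}) \circ (T^{\alpha\beta}_{g^{-1},h^{-1}})^{-1} \circ (S^\beta_g \otimes S^\alpha_h)$, the left-hand side equals $A \circ T^{\alpha\beta}_{g,h}$ and the right-hand side equals $T^{\alpha\beta}_{g,h} \circ A$, so it suffices to show that $A$ commutes with $T^{\alpha\beta}_{g,h}$. I would expand every box into the elementary tensors $\Delta^\beta, \Delta^\alpha, \bullet, S$ and first simplify $A$ itself: push the outer antipodes through the comultiplications inside $(T^{\alpha\beta}_{g^{-1},h^{-1}})^{-1}$ using the anti-comultiplicativity relation for $S$ recorded in Section~\ref{HopfGCoalgebra_Sec} (namely $S_{gh}$ expressed via $\Delta^{cop}_{h^{-1},g^{-1}}$ and $S_g,S_h$), cancel the doubled antipodes by involutivity, and carry the surviving antipode across the form $\bullet$ by the pairing identity $\langle S^\alpha_1(a),b\rangle = \langle a, S^\beta_1(b)\rangle$, which holds because the induced map $H^\alpha_1 \to (H^\beta_1)^{*,cop}$ is a Hopf algebra morphism. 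Applying coassociativity to record both $A \circ T^{\alpha\beta}_{g,h}$ and $T^{\alpha\beta}_{g,h} \circ A$ through the triple coproducts of the two inputs, one finds that each composite reduces to the same tensor weighted by a product of two scalar pairings of the form $\langle \_,\_\rangle\,\langle S^\alpha\_,\_\rangle$; since the scalar factors commute, the two orders agree and the identity follows.

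The main obstacle will be the graded bookkeeping rather than any conceptual difficulty. In the $G$-graded setting the antipode is an anti-coalgebra morphism that reverses and inverts the grading (so $S_{gh}$ feeds into $\Delta^{cop}_{h^{-1},g^{-1}}$), and each pairing axiom carries a cop-twist; keeping the subscripts on every $\Delta^\alpha_{\bullet,\bullet}$, $\Delta^\beta_{\bullet,\bullet}$, $S^\alpha_\bullet$, $S^\beta_\bullet$ and the two sectors $H^\alpha, H^\beta$ mutually consistent through each rewrite is where the care is needed. A secondary point is that the cancellation in the first identity presupposes that $(T^{\alpha\beta}_{g,h})^{-1}$ as drawn is a genuine two-sided inverse of $T^{\alpha\beta}_{g,h}$; this is exactly where the ladder-invertibility lemma together with the pairing's antipode axiom is invoked, so I would verify it once at the outset and then let both identities use it freely.
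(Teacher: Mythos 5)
Your proposal is correct; note that the paper states Lemma \ref{TVU_lem} without any proof, so there is no argument of the authors' to compare against, but what you describe is the natural verification and it does go through. Reading the defining diagrams gives exactly the factorizations you wrote for $U^{\alpha\beta}_{g,h^{-1}}$ and $V^{\alpha\beta}_{g,h}$, and the three successive cancellations (involutivity of $S^\beta$, two-sided invertibility of the ladder $T^{\alpha\beta}_{g^{-1},h^{-1}}$, involutivity of $S^\beta\otimes S^\alpha$) leave precisely $(\mathrm{id}\otimes S^\alpha_h)\circ T^{\alpha\beta}_{g,h}$. For the second identity, conjugating $(T^{\alpha\beta}_{g^{-1},h^{-1}})^{-1}$ by the four antipodes and using that $S$ is an anti-coalgebra morphism (which swaps the two legs of the coproduct and inverts the grading) turns $A$ into the ``opposite-handed'' ladder, pairing the identity-sector leg at the \emph{other} end of each coproduct from the one $T^{\alpha\beta}_{g,h}$ uses; coassociativity then expresses both $A\circ T^{\alpha\beta}_{g,h}$ and $T^{\alpha\beta}_{g,h}\circ A$ through the triple coproducts $\Delta^\beta_{1,g,1}$ and $\Delta^\alpha_{1,h,1}$ as the same middle tensor multiplied by the same two scalars, so they coincide. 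Two minor remarks. First, your opening claim that ``no pairing axiom enters'' in the first identity sits awkwardly next to your (correct) closing observation that the two-sided invertibility of $T^{\alpha\beta}_{g,h}$ is exactly where the compatibility of $\langle\_\rangle$ with comultiplication and the antipode axiom in the identity sector are used; that verification is genuinely required and should be recorded once, as you say. Second, in the commutation step the only pairing-specific input you actually need is $\langle S^\beta_1 u, S^\alpha_1 v\rangle=\langle u,v\rangle$ (equivalently $\langle Su,v\rangle=\langle u,Sv\rangle$ together with involutivity); everything else is the anti-coalgebra property of $S$ and coassociativity, exactly as in the trivial-group case of \cite{CCC22}, so the graded bookkeeping you worry about is the only real content and your handling of it is consistent.
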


\begin{dff}
    Given a Hopf pair $(H,H',\langle\_\rangle)$, define the \emph{Drinfeld Double} $D(H,H')$ to be the Hopf $G$-coalgebra with the following structure: As a vector space, it is graded with grading $D(H,H')_g = H_g\otimes H'_g$, and has structure tensors
    \begin{center}
        \begin{tikzpicture}
            \node at (0,0) {$M^D_g$};
            \foreach \angle in {155,-135} {
                \draw[->] (\angle:0.9) -- (\angle:0.4);
            }
            \foreach \angle in {135,-155} {
                \draw[->] (\angle:0.9) -- (\angle:0.4);
            }
            \draw[->] (10:0.4) -- (10:0.9);
            \draw[->] (-10:0.4) -- (-10:0.9);
            \node at (1.75,0) {$=$};
            \begin{scope}[shift = {(4,0)}]
                \node at (0,0) (U) {$U_{g,g}$};
                \node at (1,1) (MU) {$M_g$};
                \node at (1,-1) (ML) {$M'_g$};
                \draw[<-] (U) -- ++(-1,0.25);
                \draw[<-] (U) -- ++(-1,-0.25);
                \draw[<-] (MU) -- ++(-2,0);
                \draw[<-] (ML) -- ++(-2,0);
                \draw[->] (U) .. controls +(0.5,-0.5) and +(0,-1) .. (MU);
                \draw[->] (U) .. controls +(0.5,0.5) and +(0,1) .. (ML);
                \draw[->] (MU) -- ++(1,0);
                \draw[->] (ML) -- ++(1,0);
            \end{scope}
        \end{tikzpicture}

        \begin{tikzpicture}
            \node at (0,0) {$\Delta^D_{g,h}$};
            \foreach \angle in {45,25,-45,-25} {
                \draw[<-] (\angle:1) -- (\angle:0.5);
            }
            \draw[<-] (10:-0.5) -- (10:-1);
            \draw[<-] (-10:-0.5) -- (-10:-1);
            \node at (2,0) {$=$};
            \begin{scope}[shift = {(4,0)}]
                \node at (0,0.5) (DT) {$\Delta_{g,h}$};
                \node at (0,-0.5) (DB) {$\Delta'_{g,h}$};
                \draw[->] (-1,0.5) -- (DT);
                \draw[->] (-1,-0.5) -- (DB);
                \draw[->] (DT) -- ++(1.5,1);
                \draw[->] (DT) -- ++(1.5,-1);
                \draw[->] (DB) -- ++(1.5,1);
                \draw[->] (DB) -- ++(1.5,-1);
            \end{scope}
        \end{tikzpicture}

        \begin{tikzpicture}
            \node at (0,0) {$S^D_{g}$};
            \foreach \angle in {170,-170} {
                \draw[->] (\angle:1) -- (\angle:0.5);
            }
            \draw[->] (10:0.5) -- (10:1);
            \draw[->] (-10:0.5) -- (-10:1);
            \node at (1.75,0) {$=$};
            \begin{scope}[shift = {(3.5,0)}]
                \node at (0,0.5) (SU) {$S_g$};
                \node at (0,-0.5) (SL) {$S'_g$};
                \node at (1.5,0) (U) {$U_{g^{-1},g^{-1}}$};
                \draw[<-] (SU) -- ++(-1,0);
                \draw[<-] (SL) -- ++(-1,0);
                \draw[->] (SU) -- (U);
                \draw[->] (SL) -- (U);
                \draw[->] (U) -- ++(1.25,0.25);
                \draw[->] (U) -- ++(1.25,-0.25);
            \end{scope}
        \end{tikzpicture}
    \end{center}
\end{dff}

\begin{dff}\label{HopfGTriplet_def}
    Let $G$ be a group with identity 1. A \emph{Hopf $G$-triplet} is a tuple \[(H^\alpha,H^\beta,H^\kappa,\langle \_\rangle ^{\kappa,\beta},\langle \_\rangle^{\alpha,\beta}, \langle\_\rangle^{\alpha,\kappa})\] where $H^\alpha$ is a Hopf $G$-algebra, $H^\beta$ and $H^\kappa$ are Hopf $G$-coalgebras such that:
    \begin{enumerate}[(a)]
        \item $(H^\kappa,H^\beta,\langle \_\rangle^{\kappa,\beta})$ is a Hopf pair
        \item $(H^\alpha, H^\beta,\langle\_\rangle^{\alpha,\beta})$ and $(H^\alpha, H^\kappa, \langle\_\rangle^{\alpha,\kappa})$ are Hopf $G$-doublets
        \item The family of maps $D(H^{\beta}, H^{\kappa})_g\to H^{\alpha,*}_g$ defined by:
        \begin{center}
            \begin{tikzpicture}
                \node at (0,0.5) (A) {$\bullet$};
                \node at (0,-0.5) (B) {$\bullet$};
                \draw[->] (-0.6,0.5) -- (A);
                \draw[->] (-0.6,-0.5) -- (B);
                \node at (1.2,0) (D) {$\Delta^\alpha_{g}$};
                \draw[<-] (A) .. controls +(0.5,-0.5) and +(-0.5,-0.5) .. (D);
                \draw[<-] (B) .. controls +(0.5,0.5) and +(-0.5,0.5) .. (D);
                \draw[<-] (D) -- (2.2,0);
            \end{tikzpicture}
        \end{center}
        form a Hopf $G$-coalgebra morphism
    \end{enumerate}
    We say that a Hopf $G$-triplet is of \emph{finite type} if each $H^\alpha$, $H^\beta$, and $H^\kappa$ is of finite type. Additionally, we say it is \emph{involutory} if each $H^\alpha$, $H^\beta$, and $H^\kappa$ is involutory.

    A \emph{morphism} of Hopf $G$-triplets is a triple $(f^\alpha, f^\beta, f^\kappa)$ consisting of a morphism of Hopf $G$-algebras $f^\alpha$, and two morphisms of Hopf $G$-coalgebras $f^\beta$ and $f^\kappa$ which intertwine the pairwise bilinear forms.
\end{dff}

\begin{rmk}
 When $G = 1$ is trivial, the definition given here is slightly different to the definition of Hopf triplet given in \cite[Definition 2.16]{CCC22}. In particular, their convention is to use a cyclic ordering of the indices $\alpha, \beta, \kappa$, and their Drinfeld double is defined using $H^{\beta,op}$ and $H^{\kappa,cop}$. The reason for our convention here is due to the inversion of the index when utilizing $H^{cop}$ for a Hopf $G$-coalgebra $H$ or, equivalently, $H^{op}$ for a Hopf $G$-algebra $H$. We note that this change does not lose any generality.
\end{rmk}

There is a fundamental lemma of Hopf $G$-triplets a la \cite[Lemma 2.19]{CCC22}, which we state here:

\begin{lem}[Fundamental Lemma of Hopf $G$-Triplets]\label{fundlemoftriplets_lem}
    Let $H^\alpha$ be a Hopf $G$-algebra, $H^\beta$ and $H^\kappa$ Hopf $G$-coalgebras, and $\langle\_\rangle^{\kappa,\beta}$, $\langle\_\rangle^{\alpha,\beta}$, and $\langle\_\rangle^{\alpha,\kappa}$ be bilinear forms all of which satisfy all but part (c) of definition \ref{HopfGTriplet_def}, then the following are equivalent:
    \begin{enumerate}[(a)]
        \item $(H^\alpha,H^\beta,H^\kappa,\langle\_\rangle^{\kappa,\beta},\langle\_\rangle^{\alpha,\beta},\langle\_\rangle^{\alpha,\kappa})$ is a Hopf $G$-triplet
        \item The following equation holds:
        
            \begin{center}
                \begin{tikzpicture}
                    \node at (-1.25,1.75) (DUL) {$\Delta^\kappa_{g,1}$};
                    \node at (1.25,1.75) (BUR) {$\langle\_\rangle^{\alpha,\kappa}_g$};
                    \node at (-1.25,0) (BML) {$\langle\_\rangle^{\kappa,\beta}$};
                    \node at (-1.25,-1.5) (DBL) {$\Delta^\beta_{1,g^{-1}}$};
                    \node at (1.25,-1.5) (BBR) {$\langle\_\rangle^{\alpha,\beta}_{g}$};
                    \node at (1.25,0) (DMR) {$\Delta^\alpha_{g}$};
                    \node at (0,-1.5) (SB) {$S^\beta_{g^{-1}}$};
                    \draw[->] (DUL) -- (BUR);
                    \draw[->] (DUL) -- (BML);
                    \draw[->] (DMR) -- (BUR);
                    \draw[->] (DMR) -- (BBR);
                    \draw[->] (DBL) -- (BML);
                    \draw[->] (DBL) -- (SB);
                    \draw[->] (SB) -- (BBR);
                    \draw[<-] (DUL) -- ++(-1,0);
                    \draw[<-] (DBL) -- ++(-1,0);
                    \draw[<-] (DMR) -- ++(1,0);
                    \begin{scope}[shift = {(7.5,0)}]
                    \node at (-4.5,0) {$=$};
                    \node at (-1.25,1.75) (DUL) {$\Delta^\kappa_{g^{-1},1}$};
                    \node at (1.25,1.75) (BUR) {$\langle\_\rangle^{\alpha,\kappa}_{g^{-1}}$};
                    \node at (-1.25,0) (BML) {$\langle\_\rangle^{\kappa,\beta}$};
                    \node at (-1.25,-1.5) (DBL) {$\Delta^\beta_{1,g}$};
                    \node at (1.25,-1.5) (BBR) {$\langle\_\rangle^{\alpha,\beta}_{g^{-1}}$};
                    \node at (1.25,0) (DMR) {$\Delta^\alpha_{g^{-1}}$};
                    \node at (0,-1.5) (SB) {$S^\beta_{g}$};
                    \node at (-2.75,1.75) (SUL) {$S^\kappa_{g}$};
                    \node at (-2.75,-1.5) (SBL) {$S^\beta_{g^{-1}}$};
                    \node at (2.75,0) (SMR) {$S^\alpha_{g}$};
                    \draw[->] (DUL) -- (BUR);
                    \draw[->] (DUL) -- (BML);
                    \draw[->] (DMR) -- (BUR);
                    \draw[->] (DMR) -- (BBR);
                    \draw[->] (DBL) -- (BML);
                    \draw[->] (DBL) -- (SB);
                    \draw[->] (SB) -- (BBR);
                    \draw[<-] (DUL) -- (SUL);
                    \draw[<-] (DBL) -- (SBL);
                    \draw[<-] (DMR) -- (SMR);
                    \draw[<-] (SUL) -- ++(-1,0);
                    \draw[<-] (SBL) -- ++(-1,0);
                    \draw[<-] (SMR) -- ++(1,0);
                    \end{scope}
            \end{tikzpicture}
        \end{center}
        \item The following equation holds:

        \begin{center}
            \begin{tikzpicture}
                    \node at (-1.25,1.75) (DUL) {$\Delta^\kappa_{g^{-1},1}$};
                    \node at (1.25,1.75) (BUR) {$\langle\_\rangle^{\alpha,\kappa}_g$};
                    \node at (-1.25,0.5) (BML) {$\langle\_\rangle^{\kappa,\beta}$};
                    \node at (-1.25,-1.5) (DBL) {$\Delta^\beta_{1,g}$};
                    \node at (1.25,-1.5) (BBR) {$\langle\_\rangle^{\alpha,\beta}_{g}$};
                    \node at (1.25,0) (DMR) {$\Delta^\alpha_{g}$};
                    \node at (0,1.75) (SU) {$S^\kappa_{g^{-1}}$};
                    \node at (-1.25,-0.5) (SL) {$S^\beta_1$};
                    \draw[->] (DUL) -- (SU);
                    \draw[->] (SU) -- (BUR);
                    \draw[->] (DUL) -- (BML);
                    \draw[->] (DMR) -- (BUR);
                    \draw[->] (DMR) -- (BBR);
                    \draw[->] (DBL) -- (SL);
                    \draw[->] (SL) -- (BML);
                    \draw[->] (DBL) -- (BBR);
                    \draw[<-] (DUL) -- ++(-1,0);
                    \draw[<-] (DBL) -- ++(-1,0);
                    \draw[<-] (DMR) -- ++(1,0);
                    \begin{scope}[shift = {(7.5,0)}]
                    \node at (-4.5,0) {$=$};
                    \node at (-1.25,1.75) (DUL) {$\Delta^\kappa_{g,1}$};
                    \node at (1.25,1.75) (BUR) {$\langle\_\rangle^{\alpha,\kappa}_{g^{-1}}$};
                    \node at (-1.25,0.5) (BML) {$\langle\_\rangle^{\kappa,\beta}$};
                    \node at (-1.25,-1.5) (DBL) {$\Delta^\beta_{1,g^{-1}}$};
                    \node at (1.25,-1.5) (BBR) {$\langle\_\rangle^{\alpha,\beta}_{g^{-1}}$};
                    \node at (1.25,0) (DMR) {$\Delta^\alpha_{g^{-1}}$};
                    \node at (0,1.75) (SU) {$S^\kappa_{g}$};
                    \node at (-1.25,-0.5) (SL) {$S^\beta_{1}$};
                    \node at (-2.75,1.75) (SUL) {$S^\kappa_{g^{-1}}$};
                    \node at (-2.75,-1.5) (SBL) {$S^\beta_{g}$};
                    \node at (2.75,0) (SMR) {$S^\alpha_{g}$};
                    \draw[->] (DUL) -- (SU);
                    \draw[->] (SU) -- (BUR);
                    \draw[->] (DUL) -- (BML);
                    \draw[->] (DMR) -- (BUR);
                    \draw[->] (DMR) -- (BBR);
                    \draw[->] (DBL) -- (SL);
                    \draw[->] (SL) -- (BML);
                    \draw[->] (DBL) -- (BBR);
                    \draw[<-] (DUL) -- (SUL);
                    \draw[<-] (DBL) -- (SBL);
                    \draw[<-] (DMR) -- (SMR);
                    \draw[<-] (SUL) -- ++(-1,0);
                    \draw[<-] (SBL) -- ++(-1,0);
                    \draw[<-] (SMR) -- ++(1,0);
                    \end{scope}
            \end{tikzpicture}
        \end{center}
    \end{enumerate}
\end{lem}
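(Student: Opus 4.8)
The plan is to unwind part (c) of Definition \ref{HopfGTriplet_def}, which requires the family $\Phi = \{\Phi_g : D(H^\beta,H^\kappa)_g \to H^{\alpha,*}_g\}$ to be a morphism of Hopf $G$-coalgebras, and to show that under the standing hypotheses --- that $(H^\kappa,H^\beta,\langle\_\rangle^{\kappa,\beta})$ is a Hopf pair and that $(H^\alpha,H^\beta,\langle\_\rangle^{\alpha,\beta})$ and $(H^\alpha,H^\kappa,\langle\_\rangle^{\alpha,\kappa})$ are Hopf $G$-doublets --- this collapses to the single scalar identity (b). Being a morphism of Hopf $G$-coalgebras means each $\Phi_g$ is an algebra morphism and that the family intertwines $\Delta$, $\epsilon$, and $S$. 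I would first remove the antipode axiom from consideration: by the standard argument that a map respecting multiplication, comultiplication, and counit automatically respects the antipode (applied componentwise over the grading, with $\{\tilde S_g\circ\Phi_g\}$ and $\{\Phi_{g^{-1}}\circ S^D_g\}$ both serving as convolution inverses of $\Phi$), the antipode condition is implied by the others.

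Next I would dispatch the comultiplication and counit conditions. The source comultiplication $\Delta^D_{g,h}$ is the diagonal $\Delta^\beta_{g,h}\otimes\Delta^\kappa_{g,h}$ and the target is $\tilde\Delta_{g,h} = (M^\alpha_{g,h})^*$; unwinding the definition of $\Phi$ and using that $\Delta^\alpha$ is multiplicative and $M^\alpha$ is a coalgebra morphism (the Hopf $G$-algebra axioms of $H^\alpha$), together with the fact that the doublet forms induce Hopf $G$-algebra morphisms $H^\alpha\to(H^\beta)^{*,cop}$ and $H^\alpha\to(H^\kappa)^{*,cop}$, both conditions follow formally. The only remaining requirement is therefore that each $\Phi_g$ be an algebra morphism, $\Phi_g\circ M^D_g = (\Delta^\alpha_g)^*\circ(\Phi_g\otimes\Phi_g)$. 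Although this is nominally an identity in four $D$-inputs and one $H^\alpha$-input, the outer factors pair off trivially after inserting units and counits, and by coassociativity of $\Delta^\alpha$ the identity collapses to its crossing part: a relation in a single $H^\kappa$-input, a single $H^\beta$-input, and one $H^\alpha$-input. Expanding the Drinfeld double product through the tensor $U_{g,g}$ --- hence through $T^{\kappa\beta}$ and $(T^{\kappa\beta})^{-1}$ --- and simplifying with Lemma \ref{TVU_lem} should produce exactly the diagram (b); conversely (b) feeds back through the same expansion to give full multiplicativity. This establishes (a) $\Leftrightarrow$ (b).

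For (b) $\Leftrightarrow$ (c) I would argue purely diagrammatically. The two equations have the same skeleton but differ in where the three antipodes sit and in the substitution $g\leftrightarrow g^{-1}$ on the indices. Starting from (b), I would precompose the three free legs with $S^\kappa_g$, $S^\beta_{g^{-1}}$, and $S^\alpha_g$, use involutivity ($S^2=\mathrm{id}$) to cancel the resulting pairs, move the surviving antipode across a comultiplication using coassociativity and the anti-comultiplicativity of the antipode (the identity displayed just before the definition of a morphism of Hopf $G$-coalgebras), and finally apply Corollary \ref{trace_antipode_inv} to relocate the antipode between the $\langle\_\rangle^{\alpha,\beta}$ and $\langle\_\rangle^{\kappa,\beta}$ slots. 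Reading off the resulting diagram gives (c), and since each step is reversible the equivalence follows.

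The main obstacle I anticipate is bookkeeping the $G$-grading through these manipulations. The Drinfeld double multiplication and antipode mix a factor indexed by $g$ with the tensor $U_{g,g}$ or $U_{g^{-1},g^{-1}}$, and the dual structure on $H^{\alpha,*}$ inverts indices, so keeping the cop/op conventions of the induced doublet morphisms consistent (as flagged in the remark following Definition \ref{HopfGTriplet_def}) is where index errors are most likely to appear. The diagrammatic content parallels CCC's Lemma 2.19, so the genuinely new work lies in checking that each reduction step respects the grading, rather than in discovering new relations.
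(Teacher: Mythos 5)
Your proposal is correct in outline and rests on the same mechanism as the paper's proof --- namely, that under the standing Hopf-pair and Hopf-$G$-doublet hypotheses the morphism condition in Definition \ref{HopfGTriplet_def}(c) collapses to a single tensor identity governed by the $U^{\beta,\kappa}_{g,g}$ tensor of the Drinfeld double, which Lemma \ref{TVU_lem} then converts into the displayed equations --- but you route through it differently. You prove (a)$\Leftrightarrow$(b) by isolating the \emph{mixed multiplicativity} of $\Phi_g$ (discarding the antipode axiom as automatic via the convolution-inverse argument), whereas the paper proves (a)$\Leftrightarrow$(c) by a chain of equalities that lands on the \emph{antipode-intertwining} form $\Phi_g\circ U^{\beta,\kappa}_{g,g}=(S^\alpha_g)^*\circ\Phi_{g^{-1}}\circ(S^\kappa_g\otimes S^\beta_g)$; given that the counit, comultiplication, and unmixed multiplicativity conditions are all supplied by the hypotheses, these two residual conditions are interchangeable, so either entry point works. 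For (b)$\Leftrightarrow$(c) the paper rewrites both sides in terms of $T^{\beta,\kappa}$ and $(T^{\beta,\kappa})^{-1}$ paired against $\Delta^\alpha$ and compares via Lemma \ref{TVU_lem}, while you manipulate antipodes directly; both are fine, but note one misattribution: Corollary \ref{trace_antipode_inv} concerns integrals and cointegrals ($\mu_{g^{-1}}\circ S_g=\mu_g$, $S_1\circ e=e$) and is not the tool for relocating an antipode from one slot of a pairing to the other --- what you actually need there is that the pairings intertwine antipodes, which is part of the requirement that they induce Hopf ($G$-)algebra morphisms in the pair/doublet hypotheses. With that citation repaired, your argument goes through; as you anticipate, the only real labor is the index bookkeeping through the $cop$/dual conventions.
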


\begin{proof}
    Let $\Phi_g$ denote the map $D(H^{\beta},H^{\kappa})\to (H^\alpha_g)^*$. First we show (a)$\Leftrightarrow$(c). Note that the equality in (c) is equivalent to the following sequence of equalities:

        \begin{center}
            \begin{tikzpicture}
                \node at (0,0) (TI) {$(T^{\beta,\kappa}_{g^{-1},g})^{-1}$};
                \node at (1,1) (S) {$S^\kappa_{g^{-1}}$};
                \node at (2.5,0) (Phi) {$\Phi_g$};
                \draw[->] (TI) -- (S);
                \draw[->] (TI) .. controls +(0.5,-1) and +(-0.5,1) .. (Phi);
                \draw[->] (S) .. controls +(0.5,-1) and +(-0.5,-1) .. (Phi);
                \draw[<-] (TI) -- ++(-1,0.5);
                \draw[<-] (TI) -- ++(-1,-0.5);
                \draw[<-] (Phi) -- ++(0.75,0);
                \node at (4,0) {$=$};
                \begin{scope}[shift = {(6.5,0)}]
                    \node at (0,0) (TI) {$(T^{\beta,\kappa}_{g,g^{-1}})^{-1}$};
                \node at (1,1) (SU) {$S^\kappa_{g}$};
                \node at (-1,1) (SUL) {$S^\kappa_{g^{-1}}$};
                \node at (-1,-1) (SBL) {$S^\beta_g$};
                \node at (3.5,0) (SR) {$S^\alpha_g$};
                \node at (2.5,0) (Phi) {$\Phi_{g^{-1}}$};
                \draw[->] (TI) -- (SU);
                \draw[->] (TI) .. controls +(0.5,-1) and +(-0.5,1) .. (Phi);
                \draw[->] (SU) .. controls +(0.5,-1) and +(-0.5,-1) .. (Phi);
                \draw[<-] (TI) -- (SUL);
                \draw[<-] (TI) -- (SBL);
                \draw[<-] (Phi) -- (SR);
                \draw[<-] (SR) -- ++(0.75,0);
                \draw[<-] (SUL) -- ++(-0.75,0);
                \draw[<-] (SBL) -- ++(-0.75,0);
                \end{scope}
            \end{tikzpicture}
        \end{center}

        \begin{center}
            \begin{tikzpicture}
                \node at (0,0) (TI) {$(T^{\beta,\kappa}_{g^{-1},g})^{-1}$};
                \node at (1,1) (S) {$S^\kappa_{g^{-1}}$};
                \node at (-1,1) (SUL) {$S^\kappa_g$};
                \node at (-1,-1) (SBL) {$S^\beta_{g^{-1}}$};
                \node at (-2,0) (T) {$T^{\beta,\kappa}_{g,g^{-1}}$};
                \node at (-3,-1) (SBLL) {$S^\beta_g$};
                \node at (2.5,0) (Phi) {$\Phi_g$};
                \draw[->] (TI) -- (S);
                \draw[->] (TI) .. controls +(0.5,-1) and +(-0.5,1) .. (Phi);
                \draw[->] (S) .. controls +(0.5,-1) and +(-0.5,-1) .. (Phi);
                \draw[<-] (TI) -- (SUL);
                \draw[<-] (TI) -- (SBL);
                \draw[->] (T) -- (SUL);
                \draw[->] (T) -- (SBL);
                \draw[->] (SBLL) -- (T);
                \draw[<-] (SBLL) -- ++(-0.75,0);
                \draw[<-] (T) -- ++(-1.75,0.5);
                \draw[<-] (Phi) -- ++(0.75,0);
                \node at (4,0) {$=$};
                \begin{scope}[shift = {(4.5,0)}]
                    \node at (1,-1) (SL) {$S^\beta_g$};
                \node at (1,1) (SU) {$S^\kappa_{g}$};
                \node at (3.5,0) (SR) {$S^\alpha_g$};
                \node at (2.5,0) (Phi) {$\Phi_{g^{-1}}$};
                \draw[->] (SL) .. controls +(0.5,1) and +(-0.5,1) .. (Phi);
                \draw[->] (SU) .. controls +(0.5,-1) and +(-0.5,-1) .. (Phi);
                \draw[<-] (Phi) -- (SR);
                \draw[<-] (SR) -- ++(0.75,0);
                \draw[<-] (SL) -- ++(-0.75,0);
                \draw[<-] (SU) -- ++(-0.75,0);
                \end{scope}
            \end{tikzpicture}
        \end{center}

        \begin{center}
            \begin{tikzpicture}
                \node at (-2,0) (TI) {$(T^{\beta,\kappa}_{g^{-1},g})^{-1}$};
                \node at (1,-1) (S) {$S^\beta_{g^{-1}}$};
                \node at (-1,1) (SUL) {$S^\kappa_{g^{-1}}$};
                \node at (-1,-1) (SBL) {$S^\beta_{g}$};
                \node at (0,0) (T) {$T^{\beta,\kappa}_{g,g^{-1}}$};
                \node at (-3,1) (SULL) {$S^\kappa_g$};
                \node at (2.5,0) (Phi) {$\Phi_g$};
                \draw[->] (T) -- (S);
                \draw[->] (T) .. controls +(0.5,1) and +(-0.5,-1) .. (Phi);
                \draw[->] (S) .. controls +(0.5,1) and +(-0.5,1) .. (Phi);
                \draw[<-] (T) -- (SUL);
                \draw[<-] (T) -- (SBL);
                \draw[->] (TI) -- (SUL);
                \draw[->] (TI) -- (SBL);
                \draw[->] (SULL) -- (TI);
                \draw[<-] (SULL) -- ++(-0.75,0);
                \draw[<-] (TI) -- ++(-1.75,-0.5);
                \draw[<-] (Phi) -- ++(0.75,0);
                \node at (4,0) {$=$};
                \begin{scope}[shift = {(4.5,0)}]
                    \node at (1,-1) (SL) {$S^\beta_g$};
                \node at (1,1) (SU) {$S^\kappa_{g}$};
                \node at (3.5,0) (SR) {$S^\alpha_g$};
                \node at (2.5,0) (Phi) {$\Phi_{g^{-1}}$};
                \draw[->] (SL) .. controls +(0.5,1) and +(-0.5,1) .. (Phi);
                \draw[->] (SU) .. controls +(0.5,-1) and +(-0.5,-1) .. (Phi);
                \draw[<-] (Phi) -- (SR);
                \draw[<-] (SR) -- ++(0.75,0);
                \draw[<-] (SL) -- ++(-0.75,0);
                \draw[<-] (SU) -- ++(-0.75,0);
                \end{scope}
            \end{tikzpicture}
        \end{center}

        \begin{center}
            \begin{tikzpicture}
                \node at (0,0) (U) {$U^{\beta,\kappa}_{g,g}$};
                \node at (1.5,0) (Phi) {$\Phi_g$};
                \draw[->] (U) .. controls +(0.5,1) and +(-0.5,-1) .. (Phi);
                \draw[->] (U) .. controls +(0.5,-1) and +(-0.5,1) .. (Phi);
                \draw[<-] (U) -- ++(-.75,-0.5);
                \draw[<-] (U) -- ++(-.75,0.5);
                \draw[<-] (Phi) -- ++(0.75,0);
                \node at (3,0) {$=$};
                \begin{scope}[shift = {(3.5,0)}]
                    \node at (1,-1) (SL) {$S^\beta_g$};
                \node at (1,1) (SU) {$S^\kappa_{g}$};
                \node at (3.5,0) (SR) {$S^\alpha_g$};
                \node at (2.5,0) (Phi) {$\Phi_{g^{-1}}$};
                \draw[->] (SL) .. controls +(0.5,1) and +(-0.5,1) .. (Phi);
                \draw[->] (SU) .. controls +(0.5,-1) and +(-0.5,-1) .. (Phi);
                \draw[<-] (Phi) -- (SR);
                \draw[<-] (SR) -- ++(0.75,0);
                \draw[<-] (SL) -- ++(-0.75,0);
                \draw[<-] (SU) -- ++(-0.75,0);
                \end{scope}
            \end{tikzpicture}
        \end{center}

        which holds if and only if $\Phi_g$ is a Hopf $G$-coalgebra morphism.

        Now we show (b)$\Leftrightarrow$(c). The two equalities are, respectively

        \begin{center}
            \begin{tikzpicture}
                \node at (-1,0) (T) {$T^{\beta,\kappa}_{g,g^{-1}}$};
                \node at (0,-1) (S) {$S^\beta_{g^{-1}}$};
                \node at (1,0.5) (BU) {$\bullet$};
                \node at (1,-0.5) (BL) {$\bullet$};
                \node at (1.75,0) (D) {$\Delta^\alpha_g$};
                \draw[<-] (T) -- ++(-1,0.5);
                \draw[<-] (T) -- ++(-1,-0.5);
                \draw[->] (T) -- (BU);
                \draw[->] (T) -- (S);
                \draw[->] (S) -- (BL);
                \draw[->] (D) -- (BU);
                \draw[->] (D) -- (BL);
                \draw[<-] (D) -- ++(0.75,0);
            \begin{scope}[shift = {(6,0)}]
                \node at (-3,0) {$=$};
                \node at (-1,0) (T) {$T^{\beta,\kappa}_{g^{-1},g}$};
                \node at (0,-1) (S) {$S^\beta_{g}$};
                \node at (-2,1) (SU) {$S^\kappa_{g}$};
                \node at (-2,-1) (SL) {$S^\beta_{g^{-1}}$};
                \node at (2.75,0) (SR) {$S^\alpha_g$};
                \node at (0.75,0.5) (BU) {$\bullet$};
                \node at (0.75,-0.5) (BL) {$\bullet$};
                \node at (1.75,0) (D) {$\Delta^\alpha_{g^{-1}}$};
                \draw[<-] (T) -- (SU);
                \draw[<-] (T) -- (SL);
                \draw[->] (T) -- (BU);
                \draw[->] (T) -- (S);
                \draw[->] (S) -- (BL);
                \draw[->] (D) -- (BU);
                \draw[->] (D) -- (BL);
                \draw[<-] (D) -- (SR);
                \draw[<-] (SR) -- ++(0.75,0);
                \draw[<-] (SU) -- ++(-0.75,0);
                \draw[<-] (SL) -- ++(-0.75,0);
            \end{scope}
            \end{tikzpicture}
        
            \begin{tikzpicture}
                \node at (0,0) (T) {$\Big(T^{\beta,\kappa}_{g^{-1},g}\Big)^{-1}$};
                \node at (1.75,0.5) (BU) {$\bullet$};
                \node at (1.75,-0.5) (BL) {$\bullet$};
                \node at (2.5,0) (D) {$\Delta^\alpha_{g}$};
                \node at (1,1) (SUR) {$S^{\kappa}_{g}$};
                \draw[->] (T) -- (SUR);
                \draw[->] (SUR) -- (BU);
                \draw[->] (T) -- (BL);
                \draw[<-] (BU) -- (D);
                \draw[<-] (BL) -- (D);
                \draw[<-] (T) -- ++(-1.5,0.5);
                \draw[<-] (T) -- ++(-1.5,-0.5);
                \draw[<-] (D) -- ++(0.75,0);
                \begin{scope}[shift = {(5.5,0)}]
                \node at (-2,0) {$=$};
                \node at (0,0) (T) {$\Big(T^{\beta,\kappa}_{g,g^{-1}}\Big)^{-1}$};
                \node at (2,0.5) (BU) {$\bullet$};
                \node at (2,-0.5) (BL) {$\bullet$};
                \node at (3,0) (D) {$\Delta^\alpha_{g^{-1}}$};
                \node at (1,1) (SUR) {$S^{\kappa}_{g^{-1}}$};
                \node at (-1,1) (SUL) {$S^\kappa_{g}$};
                \node at (-1,-1) (SBL) {$S^\beta_{g^{-1}}$};
                \node at (4,0) (SR) {$S^\alpha_{g}$};
                \draw[->] (SUL) -- (T);
                \draw[->] (SBL) -- (T);
                \draw[->] (T) -- (SUR);
                \draw[->] (SUR) -- (BU);
                \draw[->] (T) -- (BL);
                \draw[<-] (BU) -- (D);
                \draw[<-] (BL) -- (D);
                \draw[->] (SR) -- (D);
                \draw[<-] (SUL) -- ++(-0.75,0);
                \draw[<-] (SBL) -- ++(-0.75,0);
                \draw[<-] (SR) -- ++(0.75,0);
                \end{scope}
            \end{tikzpicture}
        \end{center}

        Isolating the $\alpha$ parts on the RHS of both equalities yields:

        \begin{center}
            \begin{tikzpicture}
                \node at (-1,0) (T) {$T^{\beta,\kappa}_{g,g^{-1}}$};
                \node at (0,-1) (S) {$S^\beta_{g^{-1}}$};
                \node at (-2,1) (SU) {$S^\kappa_{g^{-1}}$};
                \node at (-2,-1) (SB) {$S^\beta_g$};
                \node at (-3,0) (TI) {$(T^{\beta,\kappa}_{g^{-1},g})^{-1}$};
                \node at (-4,-1) (SBL) {$S^\beta_{g^{-1}}$};
                \node at (1,0.5) (BU) {$\bullet$};
                \node at (1,-0.5) (BL) {$\bullet$};
                \node at (1.75,0) (D) {$\Delta^\alpha_g$};
                \draw[<-] (T) -- (SU);
                \draw[<-] (T) -- (SB);
                \draw[->] (T) -- (BU);
                \draw[->] (T) -- (S);
                \draw[->] (S) -- (BL);
                \draw[->] (D) -- (BU);
                \draw[->] (D) -- (BL);
                \draw[->] (SBL) -- (TI);
                \draw[->] (TI) -- (SU);
                \draw[->] (TI) -- (SB);
                \draw[<-] (TI) -- ++(-1.75,0.5);
                \draw[<-] (SBL) -- ++(-0.75,0);
                \draw[<-] (D) -- ++(0.75,0);
            \begin{scope}[shift = {(3.5,0)}]
                \node at (-0.5,0) {$=$};
                \node at (2.75,0) (SR) {$S^\alpha_g$};
                \node at (0.75,0.5) (BU) {$\bullet$};
                \node at (0.75,-0.5) (BL) {$\bullet$};
                \node at (1.75,0) (D) {$\Delta^\alpha_{g^{-1}}$};
                \draw[<-] (BU) -- ++(-0.75,0);
                \draw[<-] (BL) -- ++(-0.75,0);
                \draw[->] (D) -- (BU);
                \draw[->] (D) -- (BL);
                \draw[<-] (D) -- (SR);
                \draw[<-] (SR) -- ++(0.75,0);
            \end{scope}
            \end{tikzpicture}
        
            \begin{tikzpicture}
                \node at (-1,0) (T) {$(T^{\beta,\kappa}_{g^{-1},g})^{-1}$};
                \node at (0,1) (S) {$S^\kappa_{g^{-1}}$};
                \node at (-2,1) (SU) {$S^\kappa_{g}$};
                \node at (-2,-1) (SB) {$S^\beta_{g^{-1}}$};
                \node at (-3,0) (TI) {$T^{\beta,\kappa}_{g,g^{-1}}$};
                \node at (-4,1) (SUL) {$S^\kappa_{g^{-1}}$};
                \node at (1,0.5) (BU) {$\bullet$};
                \node at (1,-0.5) (BL) {$\bullet$};
                \node at (1.75,0) (D) {$\Delta^\alpha_g$};
                \draw[<-] (T) -- (SU);
                \draw[<-] (T) -- (SB);
                \draw[->] (T) -- (BL);
                \draw[->] (T) -- (S);
                \draw[->] (S) -- (BU);
                \draw[->] (D) -- (BU);
                \draw[->] (D) -- (BL);
                \draw[->] (SUL) -- (TI);
                \draw[->] (TI) -- (SU);
                \draw[->] (TI) -- (SB);
                \draw[<-] (TI) -- ++(-1.75,-0.5);
                \draw[<-] (SUL) -- ++(-0.75,0);
                \draw[<-] (D) -- ++(0.75,0);
            \begin{scope}[shift = {(3.5,0)}]
                \node at (-0.5,0) {$=$};
                \node at (2.75,0) (SR) {$S^\alpha_g$};
                \node at (0.75,0.5) (BU) {$\bullet$};
                \node at (0.75,-0.5) (BL) {$\bullet$};
                \node at (1.75,0) (D) {$\Delta^\alpha_{g^{-1}}$};
                \draw[<-] (BU) -- ++(-0.75,0);
                \draw[<-] (BL) -- ++(-0.75,0);
                \draw[->] (D) -- (BU);
                \draw[->] (D) -- (BL);
                \draw[<-] (D) -- (SR);
                \draw[<-] (SR) -- ++(0.75,0);
            \end{scope}
            \end{tikzpicture}
        \end{center}

        Comparing the LHS of these two equalities yields two tensors the equality of which follows from Lemma \ref{TVU_lem}.
\end{proof}

\begin{lem}\label{Mochida_Triplet_Lem}
    Let $H$ be any finite-type involutory quasi-triangular Hopf $G$-algebra with universal R-matrix $R\in H_1\otimes H_1$, defined as in \cite[Definition 2.26]{Moc24}, then \[(H^{cop}, H^*, H^*,\langle \_\rangle_R, \langle\_\rangle ,\langle\_\rangle)\] 
    \begin{center}
        \begin{tikzpicture}
            \node at (0,0) (B) {$\langle\_\rangle_R$};
            \foreach \angle in {150,-150} {
                \draw[<-] (\angle:0.8) -- (B);
            }
            \node at (1.5,0) {$=$};
            \begin{scope}[shift = {(3.2,0)}]
                \node at (0,0) (R) {$R$};
            \foreach \angle in {150,-150} {
                \draw[<-] (\angle:0.8) -- (R);
            }
            \node at (-0.45,-0.45) {\resizebox{4pt}{!}{1}};
            \end{scope}
        \end{tikzpicture}
    \end{center}
    where the little 1 indicates the first leg of the R-matrix and the pairing $\langle \_\rangle$ is the pairing which induces the identity.
\end{lem}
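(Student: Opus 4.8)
The plan is to verify the three conditions of Definition \ref{HopfGTriplet_def} for the tuple, with $H^\alpha = H^{cop}$ occupying the Hopf $G$-algebra slot and $H^\beta = H^\kappa = H^*$ the two coalgebra slots. Because $H$ is finite type and involutory, the co-opposite $H^{cop}$, obtained by reversing the coproduct in each sector dually to the coalgebra case of Section \ref{HopfGCoalgebra_Sec}, is again a finite-type involutory Hopf $G$-algebra, and the dual $H^*$ is a finite-type involutory Hopf $G$-coalgebra, so the underlying objects are of the required kind. Here $\langle\_\rangle$ is the evaluation pairing $H^{cop}_g\otimes (H_g)^*\to\Bbbk$, $(x,f)\mapsto f(x)$, while $\langle\_\rangle_R$ pairs two functionals by evaluating them against the legs of $R$ in the order fixed by the displayed diagram.

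First I would dispose of condition (b), namely that $(H^{cop}, H^*, \langle\_\rangle)$ is a Hopf $G$-doublet (both doublets of the tuple coincide). Finite-type reflexivity gives a canonical identification $(H^*)^*\cong H$ of Hopf $G$-algebras, hence $(H^*)^{*,cop}\cong H^{cop}$, and under it the map induced by evaluation, $H^{cop}\to (H^*)^{*,cop}$, $x\mapsto(f\mapsto f(x))$, is literally the identity of $H^{cop}$ and therefore a Hopf $G$-algebra morphism. It is worth stressing that taking the co-opposite on the $\alpha$-slot is exactly what makes this work: the same computation with $H$ in place of $H^{cop}$ would force the identity $H\to H^{cop}$ to be a morphism, which holds only when $H$ is cocommutative.

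Next I would verify condition (a), that $(H^*, H^*, \langle\_\rangle_R)$ is a Hopf pair; this involves only the identity sectors, where $(H^*)_1 = (H_1)^*$ is the dual Hopf algebra of $H_1$ and where $R$ restricts to an ordinary universal $R$-matrix for $H_1$ by Mochida's definition. After the double-dual identification, the map induced by $\langle\_\rangle_R$ is the fundamental morphism $\rho\colon (H_1)^*\to (H_1)^{cop}$, $f\mapsto (f\otimes\mathrm{id})(R)$. That $\rho$ is a Hopf algebra morphism onto the co-opposite is the standard consequence of the quasi-triangular axioms on $H_1$: the hexagon relation $(\Delta\otimes\mathrm{id})(R) = R_{13}R_{23}$ makes $\rho$ multiplicative, while $(\mathrm{id}\otimes\Delta)(R) = R_{13}R_{12}$ makes it a coalgebra morphism into $(H_1)^{cop}$, compatibility with unit and counit being immediate. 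Thus (a) reduces to routine quasi-triangular bookkeeping on $H_1$.

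With (a) and (b) established, every hypothesis of the Fundamental Lemma (Lemma \ref{fundlemoftriplets_lem}) other than its part (a) is satisfied, so it suffices to verify one of its two equivalent graphical equations in order to conclude condition (c)---equivalently, that the tuple is a genuine Hopf $G$-triplet. Unwinding the pairings, the Drinfeld-double map of condition (c) sends $(f,f')\in (H_g)^*\otimes (H_g)^*$ to $(f\otimes f')\circ\Delta^{cop}_g$, which is precisely the multiplication of $H^{\alpha,*}_g = (H^{cop}_g)^*$; the only genuinely nontrivial requirement is therefore compatibility with the double's multiplication $M^D_g$, which is assembled from the tensor $U_{g,g}$ and hence from $\langle\_\rangle_R$ and $R$. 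I expect this to be the main obstacle: translating the graphical equation of Lemma \ref{fundlemoftriplets_lem} into an algebraic identity in $H$ and matching it against the hexagon relations, the quasi-cocommutativity $\Delta^{op}(x)R = R\Delta(x)$, and the crossing axioms for the $G$-graded $R$-matrix in Mochida's definition is the one computational step, with involutivity entering exactly where the Fundamental Lemma requires it.
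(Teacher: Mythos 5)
Your overall route is the same as the paper's: treat conditions (a) and (b) of Definition \ref{HopfGTriplet_def} as consequences of double-dual reflexivity and the standard quasi-triangularity of $R$ on the identity sector, and then reduce condition (c) to one of the equivalent graphical identities of the Fundamental Lemma \ref{fundlemoftriplets_lem}. Your discussion of (a) and (b) is correct and in fact more explicit than the paper, which dismisses them with the phrase ``the pairing which induces the identity'' and goes straight to (c). The one substantive difference is that you stop exactly where the paper's proof actually does its work: you announce that matching the graphical equation of Lemma \ref{fundlemoftriplets_lem} against the quasi-triangular axioms is ``the one computational step'' and ``the main obstacle,'' but you do not carry it out. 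The paper closes this step with a short chain of tensor equalities: it specializes equation (b) of the Fundamental Lemma to the tuple, uses the involutory identity $R^{-1} = (S_1\otimes \mathrm{id}_{H_1})(R) = (\mathrm{id}_{H_1}\otimes S_1)(R)$ to absorb the antipodes decorating the right-hand side, and then observes that the two ends of the chain agree precisely because of quasi-cocommutativity $\Delta^{op}_g(x)R = R\,\Delta_g(x)$ (note that only this relation, not the hexagon identities, is needed at this stage; the hexagons enter only in your verification of condition (a)). So your proposal is a correct plan with the right ingredients identified, but the decisive verification of condition (c) is left as a declared intention rather than a proof; to be complete you would need to write out that chain of equalities, which is exactly the displayed computation in the paper.
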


\begin{proof}
We just need to check that the map $D(H^*,H^*)_g\to (H^{cop})^*_g$ defined by
\begin{center}
    \begin{tikzpicture}
                \node at (0,0) (D) {$\Delta_g$};
                \draw[->] (D) -- ++(-1,0.5);
                \draw[->] (D) -- ++(-1,-0.5);
                \draw[<-] (D) -- ++(1,0);
    \end{tikzpicture}
\end{center}
is a Hopf $G$-coalgebra morphism. By virtue of Lemma \ref{fundlemoftriplets_lem} and since $R^{-1} = (S_1\otimes id_{H_1})(R) = (id_{H_1}\otimes S_1)(R)$, we verify the following equality:
    \begin{center}
                \begin{tikzpicture}
                    \node at (-1,1.5) (DUL) {$M^{op}_{1,g^{-1}}$};
                    \node at (-1,0) (BML) {$R$};
                    \node at (-1,-1.5) (DBL) {$M^{op}_{g,1}$};
                    \node at (1,0) (DMR) {$\Delta^{cop}_{g}$};
                    \node at (-1.2,0.5) {\tiny{1}};
                    \node at (1,-1.5) (S) {$S_g$};
                    \draw[<-] (DUL) -- (BML);
                    \draw[->] (DMR) -- (S);
                    \draw[->] (S) -- (DBL);
                    \draw[<-] (DBL) -- (BML);
                    \draw[<-] (DUL) -- (DMR);
                    \draw[->] (DUL) -- ++(-1,0);
                    \draw[->] (DBL) -- ++(-1,0);
                    \draw[<-] (DMR) -- ++(1,0);
                    \node at (2.75,0) {$=$};
                    \node at (2.75,-4) {$=$};
                    \begin{scope}[shift = {(6.25,0)}]
                    \node at (-1,1.5) (DUL) {$M^{op}_{1,g}$};
                    \node at (-1,0) (BML) {$R$};
                    \node at (-1,-1.5) (DBL) {$M^{op}_{g^{-1},1}$};
                    \node at (1,0) (DMR) {$\Delta^{cop}_{g^{-1}}$};
                    \node at (-2.25,1.5) (SUL) {$S_{g^{-1}}$};
                    \node at (-2.25,-1.5) (SBL) {$S_g$};
                    \node at (2.25,0) (SMR) {$S_g$};
                    \node at (-1.2,0.5) {\tiny{1}};
                    \node at (1,-1.5) (S) {$S_{g^{-1}}$};
                    \draw[<-] (DBL) -- (S);
                    \draw[<-] (S) -- (DMR);
                    \draw[<-] (DUL) -- (BML);
                    \draw[->] (DMR) -- (DUL);
                    \draw[<-] (DBL) -- (BML);
                    \draw[->] (DUL) -- (SUL);
                    \draw[->] (DBL) -- (SBL);
                    \draw[<-] (DMR) -- (SMR);
                    \draw[->] (SUL) -- ++(-0.75,0);
                    \draw[->] (SBL) -- ++(-0.75,0);
                    \draw[<-] (SMR) -- ++(0.75,0);
                    \end{scope}
                    \begin{scope}[shift = {(6.25,-4)}]
                    \node at (-1,1.5) (DUL) {$M_{1,g}$};
                    \node at (-1,0) (BML) {$R$};
                    \node at (-1,-1.5) (DBL) {$M_{g^{-1},1}$};
                    \node at (1,0) (DMR) {$\Delta_{g}$};
                    \node at (-1.2,0.5) {\tiny{1}};
                    \node at (1,-1.5) (S) {$S_g$};
                    \draw[<-] (DBL) -- (S);
                    \draw[<-] (S) -- (DMR);
                    \draw[<-] (DUL) -- (BML);
                    \draw[->] (DMR) -- (DUL);
                    \draw[<-] (DBL) -- (BML);
                    \draw[->] (DUL) -- ++(-1,0);
                    \draw[->] (DBL) -- ++(-1,0);
                    \draw[<-] (DMR) -- ++(1,0);
                    \end{scope}
                \end{tikzpicture}
    \end{center}

    The first and last tensors are equal because $R$ is a universal R-matrix for $H$.
\end{proof}

\section{Invariants of Colored Trisection Diagrams}
\label{ColoredTrisectionInvariants_Sec}
\subsection{Trisections of 4-Manifolds}

Let $Y^k_- \cup Y^k_+ = \mathop{\hbox{\large$\#$}}_{i=1}^k S^1\times S^2$ be the standard Heegaard splitting for the $k$-fold connected sum of $S^1\times S^2$, i.e. $Y^k_\pm$ are standard handlebodies of genus $k$. When $k=0$, set $\mathop{\hbox{\large$\#$}}_{i=1}^k S^1\times S^2 = S^3$. Denote by $A\natural B$ the boundary connected sum of $A$ and $B$.

\begin{dff}
    A \emph{trisection} of a closed connected orientable 3-manifold $X$ is a sextuple $(X_1,\phi_1,X_2,\phi_2,X_3,\phi_3)$ where $X_i$ are 4-manifolds with boundary, and $\phi_i$ are diffeomorphisms satisfying:
    \begin{enumerate}
        \item There exists a $k\in \mbb Z_{\geq 0}$ such that each $\phi_i$ specifies a diffeomorphism $X_i\cong \mathop{\hbox{\large$\natural$}}_{i=1}^k S^1\times D^3 $
        \item With indices modulo 3, we have diffeomorphisms $\phi_i(X_i\cap X_{i+1}) \cong Y^k_-$ and $\phi_i(X_i\cap X_{i-1}) \cong Y^k_+$.
    \end{enumerate}
\end{dff}

\begin{dff}
    An \emph{(oriented) trisection diagram} $T$ is a triple $(\Sigma,\alpha,\beta,\kappa)$ consisting of the following data:
    \begin{enumerate}[(a)]
        \item A closed, oriented 2-manifold $\Sigma$ of genus $g$.
        \item Three sets of $g$ non-separating, embedded curves $\{\alpha_i\}$, $\{\beta_i\}$, $\{\kappa_i\}$ on $\Sigma$ such that:
        \begin{enumerate}[(i)]
            \item All curves from a single set are disjoint, i.e. $\alpha_i\cap \alpha_j$ is nonempty when $i\neq j$.
            \item There exist no points $p\in \Sigma$ which are triple points with respect to the curve sets.
            \item Any pair of the three curve sets form a Heegaard diagram for $\mathop{\hbox{\large$\#$}}_{i=1}^k S^1\times S^2$, for some $k$ independent of which two curve sets are used.
        \end{enumerate}
    \end{enumerate}
\end{dff}

Each trisection diagram $T$ yields a well-defined trisected closed, connected, orientable 4-manifold $X(T)$, and vice-versa, any closed, connected, oriented 4-manifold admits a trisection and trisection diagram.

We say that two trisection diagrams are \emph{(oriented) diffeomorphic} if there is an oriented diffeomorphism of the underlying surfaces which intertwines the curve sets. We can also define the \emph{connect sum} $T\# T'$ of two trisection diagrams $T$ and $T'$ as the connected sum of the underlying surfaces via a disk disjoint from any curve sets.

We define the \emph{trisection moves} on a trisection diagram $T$:
\begin{enumerate}[{Move} 1:]
    \item (Handle Slides) Given two distinct $\alpha$ curves $\alpha_0$ and $\alpha_1$ along with an arc $\gamma$ connected $\alpha_0$ to $\alpha_1$, one may alter $T$ to a new trisection $T'$ by replacing $\alpha_0$ by the handle slide $\alpha_0\#_\gamma \alpha_1$ of $\alpha_0$ over $\alpha_1$ via $\gamma$. Here $\alpha_0 \#_\gamma \alpha_1$ is defined as follows: Let $U\subset \Sigma$ be a ribbon neighborhood of $\alpha_0\cup \gamma \cup \alpha_1$. The boundary $\partial U$ then decomposes into three closed curves: a normal push-off of $\alpha_0$, a normal push-off of $\alpha_1$, and a third piece, which is precisely the handle-slide $\alpha_0\#_\gamma \alpha_1$. We may similarly perform handle slides for $\beta$ and $\kappa$ curves.
    \item (Stabilization) A stabilization $T'$ of $T$ is the trisection given by the connect sum $T\#T_{\mathrm{st}}$ where $T_{\mathrm{st}}$ is the genus 3 stabilized sphere trisection.
    \item (Destabilization) For a trisection $T$ which is diffeomorphic to $T\cong T'\# T_{\mathrm{st}}$, a destabilization is just the summand $T'$.
\end{enumerate}

\begin{figure}
    \centering
    \begin{tikzpicture}
        \draw[closed, ultra thick] (0,0) to[curve through = {(1,-1) .. (2,-0.5) .. (3,-1) .. (4,-0.5) .. (5,-1) .. (6,0) .. (5,1) .. (4,0.5) .. (3,1) .. (2,0.5) .. (1,1)}] (0,0);
        \draw[ultra thick] (0.85,0) ellipse (0.25 and 0.4); 
        \draw[ultra thick] (3,0) ellipse (0.25 and 0.4); 
        \draw[ultra thick] (5.15,0) ellipse (0.25 and 0.4);
        \draw[blue, ultra thick] (0.75,-0.375) arc(90:270:0.2 and 0.32);
        \draw[blue, ultra thick, dashed] (0.75,-0.375) arc(90:-90:0.2 and 0.32);
        \draw[green, ultra thick] (1,-0.375) arc(90:270:0.2 and 0.32);
        \draw[green, ultra thick, dashed] (1,-0.375) arc(90:-90:0.2 and 0.32);
        \draw[green, ultra thick] (2.9,-0.375) arc(90:270:0.2 and 0.31);
        \draw[green, ultra thick, dashed] (2.9,-0.375) arc(90:-90:0.2 and 0.31);
        \draw[red, ultra thick] (3.15,-0.375) arc(90:270:0.2 and 0.3);
        \draw[red, ultra thick, dashed] (3.15,-0.375) arc(90:-90:0.2 and 0.3);
        \draw[red, ultra thick] (5.05,-0.375) arc(90:270:0.2 and 0.32);
        \draw[red, ultra thick, dashed] (5.05,-0.375) arc(90:-90:0.2 and 0.32);
        \draw[blue, ultra thick] (5.3,-0.375) arc(90:270:0.2 and 0.32);
        \draw[blue, ultra thick, dashed] (5.3,-0.375) arc(90:-90:0.2 and 0.32);
        \draw[red, ultra thick] (0.85,0) ellipse (0.55 and 0.7); 
        \draw[blue, ultra thick] (3,0) ellipse (0.55 and 0.7); 
        \draw[green, ultra thick] (5.15,0) ellipse (0.55 and 0.7);
    \end{tikzpicture}
    \caption{$T_{\mathrm{st}}$}
    \label{fig:stabilize_diagram}
\end{figure}

\begin{thm}[\cite{GK16}]\label{kirbygay_thm}
    Any two trisection diagrams $T$ and $T'$ of a closed, connected, oriented 4-manifold $X$ are oriented diffeomorphic after a finite series of trisection moves and diagram isotopies are applied to $T$.
\end{thm}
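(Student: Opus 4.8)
The plan is to follow the Morse-theoretic strategy of \cite{GK16}, reducing the statement to a Cerf-theoretic analysis of generic smooth maps $X \to \mathbb{R}^2$. The first step is to set up a dictionary passing from the combinatorial data of a trisection diagram to the smooth data of a trisection, and then to a \emph{Morse $2$-function}: a stable smooth map $f \colon X \to \mathbb{R}^2$ whose fold locus consists of concentric circles of prescribed indices, so that the three pieces $X_1, X_2, X_3$ are the preimages of the three sectors of the target while the central fiber recovers the Heegaard surface $\Sigma$ and the three handlebody structures recover $\alpha$, $\beta$, $\kappa$. I would first verify that a trisection diagram determines such an $f$ up to the relevant equivalences, and conversely that each of the three trisection moves (handle slides, stabilization, destabilization), together with diagram isotopy, corresponds to an elementary modification of $f$. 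This bookkeeping dictionary is the backbone of the whole argument.

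Next I would establish existence: every closed connected oriented smooth $4$-manifold admits such a trisected map. This follows by taking a generic smooth map $X \to \mathbb{R}^2$, applying the standard genericity theorems to arrange that its singular locus consists only of folds and cusps, and then using the elementary moves to eliminate cusps, push the indefinite folds into concentric position, and ensure that the fibers are connected. The resulting map is in trisected form, and reading off the central fiber together with the three vanishing-cycle systems produces a trisection diagram. This handles the existence half and certifies that the correspondence in the previous paragraph is nonempty.

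The heart of the matter is uniqueness up to the moves, which I would prove using the $1$-parameter Cerf theory for Morse $2$-functions. Given two trisected maps $f_0, f_1$ presenting the same $X$, choose a generic homotopy $\{f_t\}$ between them. Along this path only finitely many codimension-one transitions occur — births and deaths of fold circles, crossings of fold values, cusp merges, and swallowtails — and the central claim is that each such transition, after a suitable stabilization, is realized by one of the permitted diagram moves. Verifying this transition-by-transition, and in particular homotoping the path so that the central fiber remains a genuine Heegaard surface throughout while the three curve systems evolve only by isotopy and handle slides, is where essentially all the work lies.

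The main obstacle I anticipate is precisely this last control problem. Unlike the classical Reidemeister–Singer theorem for Heegaard diagrams in dimension three, here one must track indefinite folds of \emph{two} distinct indices simultaneously and preserve (or restore, via stabilization) fiber-connectivity across fold crossings; the stabilization move is exactly the mechanism that absorbs those transitions which cannot be achieved by isotopy and handle slides alone. Because this is the theorem of Gay and Kirby, I would ultimately defer the exhaustive verification of the transition dictionary — the genericity of the homotopy, the finiteness and classification of its singular times, and the identification of each singular time with an allowed move — to \cite{GK16}.
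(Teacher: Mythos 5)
The paper itself gives no proof of this statement: it is imported verbatim from Gay--Kirby with the citation \cite{GK16}, exactly as you ultimately do. Your sketch correctly outlines the Morse $2$-function/Cerf-theoretic strategy of that reference (existence via genericity and elimination of cusps, uniqueness via a generic homotopy whose codimension-one transitions are absorbed by stabilization, handle slides, and isotopy), and since you defer the transition-by-transition verification to \cite{GK16} just as the paper does, your treatment is consistent with, and no less complete than, the paper's.
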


Now, diagram isotopies are very broad, so we restrict ourselves to a small subset of isotopies, which end up being all we need. A \emph{two-point move} is sliding two curves from different curve sets past each other along a disk in the surface. The following is a two-point move with an $\alpha$ curve and a $\beta$ curve:
    \begin{center}
        \begin{tikzpicture}
            \node at (0,0.75) {$\alpha_i$};
            \node at (0,-0.75) {$\beta_j$};
            \draw[blue,ultra thick] (0,-0.5) .. controls +(1,1) and +(-1,1) .. (2,-0.5);
            \draw[red,ultra thick] (0,0.5) .. controls +(1,-1) and +(-1,-1) .. (2,0.5);
            \node at (2.7,0) {$\rightarrow$};
            \begin{scope}[shift = {(3.4,0)}]
                \node at (0,1.25) {$\alpha_i$};
            \node at (0,-1.25) {$\beta_j$};
                \draw[blue, ultra thick] (0,-1) .. controls +(1,1) and +(-1,1) .. (2,-1);
                \draw[ red, ultra thick] (0,1) .. controls +(1,-1) and +(-1,-1) .. (2,1);
            \end{scope}
        \end{tikzpicture}
    \end{center}

    A \emph{three-point move} is sliding one curve past a crossing among the other two curve sets along a disk in the surface, as pictured below:
    \begin{center}
    \begin{tikzpicture}
        \node at (-2,0.75) {$\alpha_i$};
        \node at (-1.5,-1) {$\beta_j$};
        \node at (1.5,-1) {$\kappa_k$};
        \draw[blue, ultra thick] (-1,-1) -- (1,1);
        \draw[green, ultra thick] (1,-1) -- (-1,1);
        \draw[red, ultra thick] (-1.5,0.75) -- (1.5,0.75);
        \node at (2.75,0) {$\rightarrow$};
        \begin{scope}[shift = {(6,0)}]
        \node at (-2,-0.75) {$\alpha_i$};
        \node at (1.5,1) {$\beta_j$};
        \node at (-1.5,1) {$\kappa_k$};
        \draw[blue, ultra thick] (-1,-1) -- (1,1);
        \draw[green, ultra thick] (1,-1) -- (-1,1);
        \draw[red, ultra thick] (-1.5,-0.75) -- (1.5,-0.75);
        \end{scope}
    \end{tikzpicture}
    \end{center}

\begin{lem}[{\cite[Lemma 2.26]{CCC22}}]
    Let $\Gamma$ be a closed 1-manifold and $\Sigma$ a closed 2-manifold. Let $\iota_0,\iota_1:\Gamma\to \Sigma$ be homotopic immersions of $\Gamma$ in $\Sigma$ such that
    \begin{enumerate}[(a)]
        \item Each component $C$ of $\Gamma$ is embedded by $\iota_i$
        \item The components of $\iota_i(\Gamma)$ only intersect transversely at double points
    \end{enumerate}
    Then $\iota_0$ and $\iota_1$ are diffeomorphic after a series of two-point and three-point moves.
\end{lem}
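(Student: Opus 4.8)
The plan is to realize the given homotopy by a \emph{generic} one-parameter family of curve systems and to read off a two- or three-point move each time the family degenerates, exactly as in the Reidemeister calculus but carried out on the surface $\Sigma$ rather than in $3$-space. The first step is to arrange a homotopy that stays component-wise embedded. By hypothesis (a), for each component $C$ of $\Gamma$ the loops $\iota_0|_C$ and $\iota_1|_C$ are simple closed curves, and restricting the homotopy $\iota_0\simeq\iota_1$ shows they are freely homotopic. Invoking the classical fact (Epstein) that freely homotopic simple closed curves on a surface are isotopic, each $C$ may be carried from $\iota_0(C)$ to $\iota_1(C)$ through embeddings; assembling these component isotopies yields a homotopy $H\colon\Gamma\times[0,1]\to\Sigma$ with $H_0=\iota_0$, $H_1=\iota_1$ such that $H_t|_C$ is an embedding for every $t$ and every component $C$. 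This is the crucial reduction, because keeping each component embedded is precisely what will forbid self-tangency (Reidemeister-I) events.

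Next I would put $H$ in general position. Viewing $H$ as a path in the open subset $\prod_C \mathrm{Emb}(C,\Sigma)$ of the mapping space, I would perturb it rel endpoints (through component-wise embeddings) to be smooth and transverse to the discriminant of non-generic configurations. Since no component ever self-intersects, the only codimension-one walls the path can meet are (i) a transverse tangency between two distinct components and (ii) a triple point at which three local branches of distinct components pass through one point. Transversality makes these crossings isolated and simple, and pushes all codimension $\ge 2$ phenomena — simultaneous tangencies, quadruple points, a tangency occurring at an existing crossing — off the path, so there are finitely many times $0<t_1<\cdots<t_N<1$ at which exactly one wall is crossed.

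Crossing a type-(i) wall changes the crossing count between two curves by two and is exactly a two-point move; crossing a type-(ii) wall is exactly a three-point move. Between consecutive critical times the combinatorial type of the curve system is constant, so by the isotopy extension theorem $H$ restricts there to an ambient isotopy of $\Sigma$, which is a diagram isotopy of the kind allowed alongside the moves. Concatenating, $\iota_0$ is carried to $\iota_1$ by a finite alternation of diagram isotopies and two- and three-point moves, the terminal ambient isotopy furnishing the orientation-preserving diffeomorphism identifying the two diagrams.

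The main obstacle is the transversality step: one must verify that, \emph{within} the space of families keeping each component embedded, the bad set is stratified with only the two advertised codimension-one strata and that a generic path meets it transversely, one wall at a time. I would handle this by the standard jet-transversality / general-position argument for systems of immersed curves on surfaces, the decisive simplification being that embeddedness of each component (guaranteed by the Epstein reduction) deletes the self-tangency stratum and thereby removes any need for a move beyond the two- and three-point moves.
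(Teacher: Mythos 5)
The paper offers no proof of this lemma, deferring entirely to \cite[Lemma 2.26]{CCC22}, and your argument is in substance the standard general-position proof given there: reduce to a component-wise isotopy via Epstein's theorem on freely homotopic simple closed curves, then perturb the track of the family rel endpoints so that the only codimension-one degeneracies are tangencies between distinct components (two-point moves) and triple points (three-point moves), with ambient isotopy in between. The only point worth making explicit is that component-wise embeddedness rules out not just self-tangencies but also triple points in which two of the three local branches belong to the same component (such a configuration would already be a self-intersection of that component); with that observation your enumeration of the codimension-one strata is complete and the argument is correct.
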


Thus two isotopic trisection diagrams are oriented diffeomorphic after a finite sequence of two- and three-point moves.

Before we proceed, we must orient each curve in the curve sets $\alpha$, $\beta$, and $\kappa$, and we endow each curve with an arbitrary base point. We also fix a prescription for a positive crossing with each curve. This is (mostly) arbitrary, only changing pre-fixed data. We will use the prescription that for any pair of $\alpha_i$, $\beta_j$, $\kappa_k$ with intersection $p$, $(d_p(\alpha_i), d_p(\beta_j))$, $(d_p(\alpha_i),d_p(\kappa_k))$, and $(d_p(\kappa_k), d_p(\beta_j))$ each form an ordered basis for $T_p\Sigma$, where $d_p\gamma$ is the tangent vector at a point $p$ in the direction of $\gamma$. If this basis is agrees with the standard orientation of $\Sigma$, we call the crossing positive, otherwise, we call the crossing negative. Pictorially, the following crossings are positive when the plane is given the standard orientation:

\begin{center}
    \begin{tikzpicture}
        \draw[red,ultra thick, ->] (-1,1) -- (1,-1);
        \draw[blue,ultra thick, ->] (-1,-1) -- (1,1);
        \node at (-1.25,-1.25) {$\beta$};
        \node at (-1.25,1.25) {$\alpha$};
        \draw[green,ultra thick, ->] (2,1) -- (4,-1);
        \draw[blue,ultra thick, ->] (2,-1) -- (4,1);
        \node at (1.75,-1.25) {$\beta$};
        \node at (1.75,1.25) {$\kappa$};
        \draw[red,ultra thick, ->] (5,1) -- (7,-1);
        \draw[green,ultra thick, ->] (5,-1) -- (7,1);
        \node at (4.75,-1.25) {$\kappa$};
        \node at (4.75,1.25) {$\alpha$};
    \end{tikzpicture}
\end{center}

Let $F[\alpha] = F[\alpha_1,\dots ,\alpha_g]$ denote the free group on the generators $\alpha_1$ to $\alpha_g$. Denote by $w_i^\beta(\alpha_1,\dots,\alpha_g)$ the word in $F[\alpha]$ given by $x_1\cdots x_{n_i}$, where $x_j$ corresponds to the $j$th crossing of an $\alpha$ curve encountered by traversing $\beta_i$ from its base point in the direction defined by its orientation. If this crossing is with $\alpha_k$, then define $x_j = \alpha_k^{\eta_k}$, where $\eta_k = 1$ if the crossing is positive, and $\eta_k = -1$ if the crossing is negative. Define $w_i^\kappa(\alpha_1,\dots,\alpha_g)$ in the same way as above, replacing any instance of $\beta$ with $\kappa$. We can now define a \emph{$G$-colored trisection diagram} as a trisection diagram where each $\alpha_i$ is colored by an element $a_i\in G$ such that $w_i^\beta(a_1,\dots ,a_g) = w_i^\kappa(a_1,\dots,a_g) = 1$.

Two $G$-colored trisection diagrams $T = (\Sigma, \alpha, \beta, \kappa)$ and $T' = (\Sigma', \alpha', \beta', \kappa')$ are \emph{equivalent} if one can be obtained from the other by a finite sequence of the following moves:

\begin{enumerate}
    \item (Surface Diffeomorphism) Performing an orientation-preserving diffeomorphism of one surface into the other, intertwining the curve sets and leaving the colors unchanged.
    \item (Orientation Reversal) Flipping the orientation of a curve. For an $\alpha$ curve $\alpha_i$, change its color $a_i$ to $a_i^{-1}$.
    \item (Isotopy of the Diagram) Performing an isotopy (two-point or three-point moves) of the curve sets, leaving the colors unchanged.

    \item (Stabilization/Destabilization) Remove a disk from $\Sigma$ away from the curve sets and performing the connected sum with the standard genus 3 trisection diagram for $S^4$. The added $\alpha$ curve is colored with $1\in G$. We may also perform the inverse procedure.

    \item (Handle Slide) Perform a handle slide trisection move to the underlying diagram. If $\alpha_i$ is slid over $\alpha_j$, assuming the connecting curve $\gamma$ is oriented from $\alpha_i$ to $\alpha_j$, and, without loss of generality, orient the curves relative to one another so that $(d_{\gamma(0)}\alpha_i, d_{\gamma(0)}\gamma)$ has the opposite parity to $(d_{\gamma(1)}\alpha_j, d_{\gamma(1)}\gamma)$ after parallel transport. Replace the color of $\alpha_j$ from $a_j$ to $a_i^{-1} a_j$. The following diagram illustrates this:
    \begin{center}
    \begin{tikzpicture}
        \draw[<-,red, ultra thick] (-2,2) -- (-2,-2);
        \draw[red, ultra thick, decoration={markings, mark=at position 0.75 with {\arrow{>}}}, postaction={decorate}] (0,0) circle (1cm);
        \foreach \angle in {0,-120,120} {
            \draw[ultra thick] (\angle : 0.8) -- (\angle : 1.2);
        }
        \node at (0 : 1.5) {$c_1$};
        \node at (120 : 1.5) {$c_1$};
        \node at (-120 : 1.5) {$c_3$};
        \node at (-2.3,0) {$a_i$};
        \node at (-0.7,0) {$a_j$};
        \node at (2.5,0) {$\rightarrow$};
        \begin{scope}[shift = {(5,0)}]
        \draw[red, ultra thick, <-] (0,2) -- (0,-2);
        \begin{scope}[shift = {(2,0)}]
        \draw[red, ultra thick, decoration={markings, mark=at position 0.75 with {\arrow{>}}}, postaction={decorate}] (0,0) circle (0.8cm);
        \draw[red, ultra thick] (0,0) circle (1cm);
        \foreach \angle in {0,-120,120} {
            \draw[ultra thick] (\angle : 0.6) -- (\angle : 1.2);
        }
        \node at (0 : 1.5) {$c_1$};
        \node at (120 : 1.5) {$c_2$};
        \node at (-120 : 1.5) {$c_3$};
        \node at (-2.3,0) {$a_i$};
        \node at (-0.2,0) {$a_i^{-1}a_j$};
        \draw[fill = white,white] (-2.05,-0.25) rectangle (-0.95,0.25);
        \draw[red, ultra thick] (-2,-0.25) -- (-0.96,-0.25);
        \draw[red,ultra thick] (-2,0.25) -- (-0.96,0.25);
        \end{scope}
        \end{scope}
    \end{tikzpicture}
    \end{center}
\end{enumerate}

We remark that each of these does in fact take a $G$-colored trisection diagram to a $G$-colored trisection diagram.

\subsection{Trisection Bracket}

Let $G$ be a group with identity 1. Let $T = (\Sigma, \alpha,\beta,\kappa)$ be a $G$-colored trisection diagram such that the color assigned to each $\alpha_i$ is $a_i\in G$. Orient each circle arbitrarily and provide base points to each circle arbitrarily. Let $\mcal H = (H^\alpha,H^\beta,H^\kappa,\langle \_\rangle ^{\beta,\kappa},\langle \_\rangle^{\alpha,\beta}, \langle\_\rangle^{\kappa,\alpha})$ be an involutory Hopf $G$-triplet of finite type over the ground field $\mbb k$. Let $e^\alpha = \{e^\alpha_g\}_{g\in G}$ be a right $G$-cointegral for $H^\alpha$. Let $e^\beta$ and $e^\kappa$ be right cointegrals for the Hopf algebras $H^\beta_1$, $H^\kappa_1$, respectively.

To each $\alpha_i$ we assign the tensor
\begin{center}
    \begin{tikzpicture}
        \node at (0,0) (D) {$\Delta^\alpha_{a_i}$};
        \foreach \angle in {-135, -90,45,90,135} {
            \draw[->] (\angle:0.4) -- (\angle:1);
        }
        \node at (135:1.4) {$c_1$};
        \node at (90:1.4) {$c_2$};
        \node at (45:1.4) {$c_3$};
        \node at (-90:1.4) {$c_{n_i-1}$};
        \node at (-135:1.4) {$c_{n_i}$};
        \foreach \angle in {10, -20, -50} {
            \node at (\angle:0.7) {$\cdot$};
        }
        \node at (-2.5,0) (e) {$e^\alpha_{a_i}$};
        \draw[->] (e) -- (D);
    \end{tikzpicture}
\end{center}
where $c_k$ is the $k$th crossing encountered by traversing $\alpha_i$ from the base point along its orientation.

To each $\beta_j$, we assign the tensor
\begin{center}
    \begin{tikzpicture}
        \node at (0,0) (D) {$\Delta^\beta_{b_1,\dots ,b_{n_j}}$};
        \foreach \angle in {-135, -90,45,90,135} {
            \draw[->] (\angle:0.4) -- (\angle:1);
        }
        \node at (135:1.4) {$c_1$};
        \node at (90:1.4) {$c_2$};
        \node at (45:1.4) {$c_3$};
        \node at (-90:1.4) {$c_{n_j-1}$};
        \node at (-135:1.4) {$c_{n_j}$};
        \foreach \angle in {10, -20, -50} {
            \node at (\angle:1.2) {$\cdot$};
        }
        \node at (-2.5,0) (e) {$e^\beta$};
        \draw[->] (e) -- (D);
    \end{tikzpicture}
\end{center}
where $c_k$ is the $k$th crossing encountered by traversing $\beta_j$ from the base point along its orientation, and $b_k$ is defined as follows: $b_k = 1$ if $c_k$ is a crossing with a $\kappa$ curve, and if $c_k$ is a crossing of $\beta_j$ with $\alpha_l$, then $b_k = a_l^{\nu}$ where $\nu = 1$ if $c_k$ is a positive crossing, and $\nu = -1$ if $c_k$ is a negative crossing.

We define the tensor associated to $\kappa_p$ similarly:
\begin{center}
    \begin{tikzpicture}
        \node at (0,0) (D) {$\Delta^\kappa_{d_1,\dots ,d_{n_p}}$};
        \foreach \angle in {-135, -90,45,90,135} {
            \draw[->] (\angle:0.4) -- (\angle:1);
        }
        \node at (135:1.4) {$c_1$};
        \node at (90:1.4) {$c_2$};
        \node at (45:1.4) {$c_3$};
        \node at (-90:1.4) {$c_{n_p-1}$};
        \node at (-135:1.4) {$c_{n_p}$};
        \foreach \angle in {10, -20, -50} {
            \node at (\angle:1.2) {$\cdot$};
        }
        \node at (-2.5,0) (e) {$e^\kappa$};
        \draw[->] (e) -- (D);
    \end{tikzpicture}
\end{center}
where $c_k$ is the $k$th crossing encountered by traversing $\kappa_m$ from the basepoint along its orientation, and $b_k$ is defined as follows: $d_k = 1$ if $c_k$ is a crossing with a $\beta$ curve, and if $c_k$ is a crossing of $\kappa_m$ with $\alpha_l$, then $d_k = a_l^{\nu}$ where $\nu = 1$ if $c_k$ is a positive crossing, and $\nu = -1$ if $c_k$ is a negative crossing.

To each crossing of $\alpha_i$ with $\beta_j$, we contract via the tensor
\begin{center}
    \begin{tikzpicture}
        \node at (0,0) (S) {$(S^\beta_{a_i^{(-1)^\nu}})^{\nu}$};
        \node at (2,0) (B) {$\bullet$};
        \draw[->] (-2,0) -- (S);
        \draw[->] (S) -- (B);
        \draw[<-] (B) -- (3,0);
    \end{tikzpicture}
\end{center}
where $\nu = 0$ if the crossing is positive and $\nu = 1$ if the crossing is negative.

To each crossing of $\alpha_i$ with $\kappa_j$, we contract via the tensor
\begin{center}
    \begin{tikzpicture}
        \node at (0,0) (S) {$(S^\kappa_{a_i^{(-1)^\nu}})^{\nu}$};
        \node at (2,0) (B) {$\bullet$};
        \draw[->] (-2,0) -- (S);
        \draw[->] (S) -- (B);
        \draw[<-] (B) -- (3,0);
    \end{tikzpicture}
\end{center}
where $\nu = 0$ if the crossing is positive and $\nu = 1$ if the crossing is negative.

To each crossing of $\beta_i$ with $\kappa_j$, we contract via the tensor
\begin{center}
    \begin{tikzpicture}
        \node at (0,0) (S) {$(S^\beta_1)^{\nu}$};
        \node at (2,0) (B) {$\bullet$};
        \draw[->] (-2,0) -- (S);
        \draw[->] (S) -- (B);
        \draw[<-] (B) -- (3,0);
    \end{tikzpicture}
\end{center}
where $\nu = 0$ if the crossing is positive and $\nu = 1$ if the crossing is negative.

Define the \emph{trisection bracket} $\langle T\rangle^G_{\mcal H, e}$ to be the contraction of this tensor network, where $e = \{e^\alpha, e^\beta, e^\kappa\}$.

Let $T_{\mathrm{st}}$ be the standard $G$-colored genus 3 stabilized sphere trisection diagram. Suppose that $z^3 =\langle T_{\mathrm{st}}\rangle^G_{ \mcal H, e}$ has a root $\zeta$ in the units $\mbb k^\times$ of the ground field. Set \[Z(T;G;\mcal H, e) = \zeta^{-g(\Sigma)}\langle T\rangle^G_{\mcal H, e}\] where $g(\Sigma)$ is the genus of the trisection surface.

\begin{thm}\label{coloredtrisectioninvariance_thm}
    $Z(T;G;\mcal H, e)$ is an equivalence invariant of $G$-colored trisection diagrams.
\end{thm}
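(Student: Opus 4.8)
The plan is to verify directly that the normalized bracket $Z(T;G;\mcal H,e)$ is unchanged under each of the five moves defining equivalence of $G$-colored trisection diagrams; since the bracket is the contraction of a tensor network, each move reduces to a local identity among tensors. Before treating the moves I would first record that $\langle T\rangle^G_{\mcal H,e}$ does not depend on the arbitrary choice of base points: moving the base point of a curve past one crossing amounts to cyclically rotating which output leg of its $\Delta$-starburst is read first, and this is permissible because the cointegrals $e^\alpha,e^\beta,e^\kappa$ are cyclic (the same cyclicity that justified the multi-leg $\Delta$-with-$e$ abbreviation above). With this in hand, invariance under Move 1 (surface diffeomorphism) is immediate, since the tensor network is assembled entirely from the combinatorial data of crossings, colors, and orientations, all of which are intertwined by an orientation-preserving diffeomorphism of the curve sets.

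For Move 2 (orientation reversal), reversing the orientation of a curve reverses the cyclic order in which its crossings are traversed, effectively replacing the assigned comultiplication by its $\mathrm{cop}$ version, and for an $\alpha$-curve it flips the color $a_i\mapsto a_i^{-1}$. I would show the contracted value is preserved by invoking the involutory hypothesis, the relation expressing that the antipode is anti-comultiplicative (displayed before the morphism definition), and the cointegral-invariance identities of the type in Corollary~\ref{trace_antipode_inv}; the bookkeeping to check here is that the crossing tensors, with their antipode and exponent-$\nu$ conventions, transform consistently once the positive-crossing prescription is recomputed for the reversed curve.

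Move 3 splits into the two isotopy types. The two-point move merely rearranges which outgoing legs are joined by a bilinear form and follows from the defining compatibility of the pairings with the comultiplications, organized through the $T^{\alpha\beta},U^{\alpha\beta},V^{\alpha\beta}$ tensors and Lemma~\ref{TVU_lem}. The three-point move is the crucial algebraic input: sliding one curve past a crossing of the other two is precisely the local equality asserted by the Fundamental Lemma of Hopf $G$-Triplets (Lemma~\ref{fundlemoftriplets_lem}), so invariance is read off by matching the before/after local networks with equation (b) (equivalently (c)) of that lemma. Move 4 (stabilization/destabilization) is handled by multiplicativity together with the normalization: since the connected sum is formed along a disk disjoint from all curves, the network of $T\#T_{\mathrm{st}}$ is the disjoint union of those of $T$ and $T_{\mathrm{st}}$, whence $\langle T\#T_{\mathrm{st}}\rangle^G_{\mcal H,e}=\langle T\rangle^G_{\mcal H,e}\cdot\langle T_{\mathrm{st}}\rangle^G_{\mcal H,e}$. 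Because $g(\Sigma\#\Sigma_{\mathrm{st}})=g(\Sigma)+3$ and $\zeta^3=\langle T_{\mathrm{st}}\rangle^G_{\mcal H,e}$, we get $Z(T_{\mathrm{st}};G;\mcal H,e)=\zeta^{-3}\langle T_{\mathrm{st}}\rangle^G_{\mcal H,e}=1$, so the factor $\zeta^{-g(\Sigma)}$ absorbs exactly the contribution of the three added genus, giving $Z(T\#T_{\mathrm{st}})=Z(T)$.

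I expect the main obstacle to be Move 5 (handle slide), where $\alpha_0$ is replaced by $\alpha_0\#_\gamma\alpha_1$ and the color is changed $a_j\mapsto a_i^{-1}a_j$. The topological merging of the two curves must be matched algebraically using the cointegral axiom for $e^\alpha$ to absorb the extra comultiplication produced along the slide, together with the compatibility of $\Delta^\alpha$ with the multiplication and antipode of $H^\alpha$. The delicate point, absent in the ungraded CCC setting, is that one must track the $G$-grading through the slide and verify that the recoloring rule $a_j\mapsto a_i^{-1}a_j$ is exactly what forces the group elements labeling the comultiplication components to agree on the two sides, all while the defining relations $w_i^\beta(a_1,\dots,a_g)=w_i^\kappa(a_1,\dots,a_g)=1$ are preserved. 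I anticipate that this grading-and-color bookkeeping, rather than any single new identity, will be the heart of the argument.
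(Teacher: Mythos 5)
Your plan matches the paper's proof move for move: surface diffeomorphisms leave the network unchanged, orientation reversal is handled by the anti-(co)multiplicativity of the antipode, the two-point move collapses via the pairing compatibilities together with the antipode axiom, the three-point move is exactly the local identity of Lemma~\ref{fundlemoftriplets_lem}, stabilization follows from multiplicativity of the bracket against the $\zeta^{-g(\Sigma)}$ normalization, and the handle slide is resolved by combining the pairing's compatibility with $M^\alpha$ and the right $G$-cointegral axiom, with the recoloring $a_j\mapsto a_i^{-1}a_j$ making the gradings match --- all as in the paper. The only cosmetic difference is that you credit the two-point move to Lemma~\ref{TVU_lem}, whereas the paper collapses the bigon directly to $\epsilon^\alpha_{a_i}\otimes\epsilon^\beta$ using the doublet compatibility and the antipode axiom.
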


\begin{proof}
    Suppose $T = (\Sigma, \alpha,\beta,\kappa)$ is colored by $(a_1,\dots ,a_g)$. We check each of the moves. For move 1, the tensors remain unchanged, so the bracket remains fixed. For orientation reversal, use the fact that $S^*_a$ is an anti-$G$-(co)algebra morphism for each $*\in \{\alpha,\beta,\kappa\}$.

    For a two-point move, a representative example has the following form:

    \begin{center}
        \begin{tikzpicture}
            \node at (0,0.75) {$\alpha_i$};
            \node at (0,-0.75) {$\beta_j$};
            \draw[->,ultra thick] (0,-0.5) .. controls +(1,1) and +(-1,1) .. (2,-0.5);
            \draw[<-,ultra thick] (0,0.5) .. controls +(1,-1) and +(-1,-1) .. (2,0.5);
            \node at (2.7,0) {$\rightarrow$};
            \begin{scope}[shift = {(3.4,0)}]
                \node at (0,1.25) {$\alpha_i$};
            \node at (0,-1.25) {$\beta_j$};
                \draw[->,ultra thick] (0,-1) .. controls +(1,1) and +(-1,1) .. (2,-1);
                \draw[<-,ultra thick] (0,1) .. controls +(1,-1) and +(-1,-1) .. (2,1);
            \end{scope}
        \end{tikzpicture}
    \end{center}

    The corresponding equalities demonstrate invariance:

    \begin{center}
        \begin{tikzpicture}
            \node at (-0.5,0) (Da) {$\Delta^\alpha_{a_i}$};
            \node at (3.25,0) (Db) {$\Delta^\beta_{a_i,a_i^{-1}}$};
            \node at (1.75,-0.5) (S) {$S^\beta_{a_i^{-1}}$};
            \node at (0.75,0.5) (BU) {$\bullet$};
            \node at (0.75,-0.5) (BB) {$\bullet$};
            \draw[->] (Da) -- (BU);
            \draw[->] (Da) -- (BB);
            \draw[->] (Db) -- (BU);
            \draw[->] (Db) -- (S);
            \draw[->] (S) -- (BB);
            \draw[<-] (Da) -- ++(-0.75,0);
            \draw[<-] (Db) -- ++(1,0);
            \node at (4.85,0) {$=$};
            \begin{scope}[shift = {(6.5,0)}]
            \node at (0.5,0) (M) {$M^\beta_{a_i}$};
            \node at (3.25,0) (D) {$\Delta^\beta_{a_i,a_i^{-1}}$};
            \node at (1.75,-0.5) (S) {$S^\beta_{a_i^{-1}}$};
            \node at (-0.5,0) (B) {$\bullet$};
            \draw[->] (D) .. controls +(-0.5,0.75) and +(0.5,0.75) .. (M);
            \draw[->] (D) -- (S);
            \draw[->] (S) -- (M);
            \draw[->] (M) -- (B);
            \draw[<-] (B) -- ++(-0.5,0);
            \draw[<-] (D) -- ++(1,0);
            \end{scope}
            \begin{scope}[shift = {(6.5,-1)}]
            \node at (-1.65,0) {$=$};
            \node at (0.5,0) (i) {$i^\beta_{a_i}$};
            \node at (1.5,0) (e) {$\epsilon^\beta$};
            \node at (-0.5,0) (B) {$\bullet$};
            \draw[<-] (B) -- ++(-0.5,0);
            \draw[<-] (e) -- ++(0.75,0);
            \draw[->] (i) -- (B);
            \end{scope}
            \begin{scope}[shift = {(6.5,-2)}]
            \node at (-1.65,0) {$=$};
            \node at (0.5,0) (i) {$\epsilon^\alpha_{a_i}$};
            \node at (1.5,0) (e) {$\epsilon^\beta$};
            \draw[<-] (e) -- ++(0.75,0);
            \draw[<-] (i) -- ++(-0.75,0);
            \end{scope}
        \end{tikzpicture}
    \end{center}

    For a three-point move, a representative example has the following form:

    \begin{center}
    \begin{tikzpicture}
        \node at (-2,0.75) {$\alpha_i$};
        \node at (-1.5,-1) {$\beta_j$};
        \node at (1.5,-1) {$\kappa_k$};
        \draw[->,ultra thick] (-1,-1) -- (1,1);
        \draw[<-,ultra thick] (1,-1) -- (-1,1);
        \draw[<-,ultra thick] (-1.5,0.75) -- (1.5,0.75);
        \node at (2.75,0) {$\rightarrow$};
        \begin{scope}[shift = {(6,0)}]
        \node at (-2,-0.75) {$\alpha_i$};
        \node at (1.5,1) {$\beta_j$};
        \node at (-1.5,1) {$\kappa_k$};
        \draw[->,ultra thick] (-1,-1) -- (1,1);
        \draw[<-,ultra thick] (1,-1) -- (-1,1);
        \draw[<-,ultra thick] (-1.5,-0.75) -- (1.5,-0.75);
        \end{scope}
    \end{tikzpicture}
    \end{center}

    For invariance, the following equality must hold:

    \begin{center}
        \begin{tikzpicture}
            \node at (-1.5,-1.5) (DBL) {$\Delta^\kappa_{a_i,1}$};
            \node at (1.5,-1.5) (DBR) {$\Delta^\beta_{1,a_i^{-1}}$};
            \node at (0,-1.5) (BB) {$\bullet$};
            \node at (0,0.5) (DU) {$\Delta^\alpha_{a_i}$};
            \node at (-1.5,0.5) (BUL) {$\bullet$};
            \node at (1.5,0.5) (BUR) {$\bullet$};
            \node at (1.5,-0.5) (S) {$S^\beta_{a_i^{-1}}$};
            \draw[->] (DBR) -- (BB);
            \draw[->] (DBL) -- (BB);
            \draw[->] (DU) -- (BUL);
            \draw[->] (DU) -- (BUR);
            \draw[->] (DBL) -- (BUL);
            \draw[->] (DBR) -- (S);
            \draw[->] (S) -- (BUR);
            \draw[<-] (DU) -- ++(0,0.75);
            \draw[<-] (DBL) -- ++(0,-0.75);
            \draw[<-] (DBR) -- ++(0,-0.75);
            \node at (2.5,-0.75) {$=$};
            \begin{scope}[shift = {(5,0)}]
            \node at (1.5,0.5) (DUR) {$\Delta^{\kappa,cop}_{a_i^{-1},1}$};
            \node at (-1.5,0.5) (DUL) {$\Delta^{\beta,cop}_{1,a_i}$};
            \node at (0,0.5) (BU) {$\bullet$};
            \node at (0,-1.5) (DB) {$\Delta^{\alpha,cop}_{a_i}$};
            \node at (-1.5,-1.5) (BBL) {$\bullet$};
            \node at (1.5,-1.5) (BBR) {$\bullet$};
            \node at (-1.5,-0.5) (S) {$S^\beta_{a_i^{-1}}$};
            \draw[->] (DB) -- (BBR);
            \draw[->] (DB) -- (BBL);
            \draw[->] (DUL) -- (S);
            \draw[->] (S) -- (BBL);
            \draw[->] (DUL) -- (BU);
            \draw[->] (DUR) -- (BBR);
            \draw[->] (DUR) -- (BU);
            \draw[<-] (DB) -- ++(0,-0.75);
            \draw[<-] (DUR) -- ++(0,0.75);
            \draw[<-] (DUL) -- ++(0,0.75);
            \end{scope}
        \end{tikzpicture}
    \end{center}

    This equality follows from Lemma \ref{fundlemoftriplets_lem} by taking the equality in (b) and pushing the antipodes through, using that the bilinear forms induce morphisms of Hopf $G$-algebras (or Hopf algebras when applicable). One can similarly use Lemma \ref{fundlemoftriplets_lem} to prove the other versions of the three-point move.

    It is clear that performing the connected sum of trisection diagrams results in the product of trisection brackets. Due to this, stabilization will multiply the bracket by $\langle T_{\mathrm{st}}\rangle$, and adds 3 to the genus of the trisection surface. This clearly cancels with the change in the normalization factor $\zeta^{-g(\Sigma)}$.

    Finally, for handle slide. A representative example has the following form:

    \begin{center}
    \begin{tikzpicture}
        \draw[->,ultra thick] (-2,2) -- (-2,-2);
        \draw[ultra thick,decoration={markings, mark=at position 0.75 with {\arrow{<}}}, postaction={decorate}] (0,0) circle (1cm);
        \foreach \angle in {0,-120,120} {
            \draw[ultra thick] (\angle : 0.8) -- (\angle : 1.2);
        }
        \node at (0 : 1.5) {$c_2$};
        \node at (120 : 1.5) {$c_1$};
        \node at (-120 : 1.5) {$c_3$};
        \node at (-2.3,0) {$a_i$};
        \node at (-0.7,0) {$a_j$};
        \node at (2.5,0) {$\rightarrow$};
        \begin{scope}[shift = {(5,0)}]
        \draw[->,ultra thick] (0,2) -- (0,-2);
        \begin{scope}[shift = {(2,0)}]
        \draw[ultra thick,decoration={markings, mark=at position 0.75 with {\arrow{<}}}, postaction={decorate}] (0,0) circle (0.8cm);
        \draw[ultra thick] (0,0) circle (1cm);
        \foreach \angle in {0,-120,120} {
            \draw[ultra thick] (\angle : 0.6) -- (\angle : 1.2);
        }
        \node at (0 : 1.5) {$c_2$};
        \node at (120 : 1.5) {$c_1$};
        \node at (-120 : 1.5) {$c_3$};
        \node at (-2.3,0) {$a_i$};
        \node at (-0.2,0) {$a_ja_i^{-1}$};
        \draw[fill = white,white] (-2.05,-0.25) rectangle (-0.95,0.25);
        \draw[ultra thick] (-2,-0.25) -- (-0.96,-0.25);
        \draw[ultra thick] (-2,0.25) -- (-0.96,0.25);
        \end{scope}
        \end{scope}
    \end{tikzpicture}
    \end{center}

    The following equalities demonstrates invariance: 

    \begin{center}
        \begin{tikzpicture}
            \node at (0,1) (e) {$e^\alpha_{a_i^{-1}a_j}$};
            \node at (0,0) (DU) {$\Delta^\alpha_{a_i^{-1} a_j}$};
            \node at (0,-2) (DL) {$\Delta^\alpha_{a_i}$};
            \node at (2,0.5) (BU1) {$\bullet$};
            \node at (2,0) (BU2) {$\bullet$};
            \node at (2,-0.75) (BM1) {$\bullet$};
            \node at (2,-1.25) (BM2) {$\bullet$};
            \node at (2,-2) (BL1) {$\bullet$};
            \node at (2,-2.5) (BL2) {$\bullet$};
            \node at (3,0.25) (DUR) {$\Delta$};
            \node at (3,-1) (DMR) {$\Delta$};
            \node at (3,-2.25) (DBR) {$\Delta$};
            \node at (4,0.25) (c1) {$c_1$};
            \node at (4,-1) (c2) {$c_2$};
            \node at (4,-2.25) (c3) {$c_3$};
            \draw[->] (DU) -- (BU1);
            \draw[->] (DU) -- (BM1);
            \draw[->] (DU) -- (BL1);
            \draw[->] (DL) -- (BU2);
            \draw[->] (DL) -- (BM2);
            \draw[->] (DL) -- (BL2);
            \draw[->] (DUR) -- (BU1);
            \draw[->] (DUR) -- (BU2);
            \draw[->] (DMR) -- (BM1);
            \draw[->] (DMR) -- (BM2);
            \draw[->] (DBR) -- (BL1);
            \draw[->] (DBR) -- (BL2);
            \draw[<-] (DU) -- (e);
            \draw[<-] (DL) -- ++(-0.75,0);
            \draw[<-] (DUR) -- (c1);
            \draw[<-] (DMR) -- (c2);
            \draw[<-] (DBR) -- (c3);
            \node at (4.75,-1) {$=$};
            \begin{scope}[shift = {(6,0)}]
            \node at (0.25,1) (e) {$e^\alpha_{a_i^{-1}a_j}$};
                \node at (0.25,0) (DU) {$\Delta^\alpha_{a_ja_i^{-1}}$};
            \node at (0.25,-2) (DL) {$\Delta^\alpha_{a_i}$};
            \node at (2.5,0.25) (MU) {$M^\alpha_{a_ja_i^{-1},a_i}$};
            \node at (2.5,-1) (MM) {$M^\alpha_{a_ja_i^{-1},a_i}$};
            \node at (2.5,-2.25) (ML) {$M^\alpha_{a_ja_i^{-1},a_i}$};
            \node at (4,0.25) (BU) {$\bullet$};
            \node at (4,-1) (BM) {$\bullet$};
            \node at (4,-2.25) (BL) {$\bullet$};
            \node at (5,0.25) (c1) {$c_1$};
            \node at (5,-1) (c2) {$c_2$};
            \node at (5,-2.25) (c3) {$c_3$};
            \draw[<-] (DU) -- (e);
            \draw[<-] (DL) -- ++(-0.75,0);
            \draw[<-] (BU) -- (c1);
            \draw[<-] (BM) -- (c2);
            \draw[<-] (BL) -- (c3);
            \draw[<-] (BU) -- (MU);
            \draw[<-] (BM) -- (MM);
            \draw[<-] (BL) -- (ML);
            \draw[->] (DU) -- (MU);
            \draw[->] (DU) -- (MM);
            \draw[->] (DU) -- (ML);
            \draw[->] (DL) -- (MU);
            \draw[->] (DL) -- (MM);
            \draw[->] (DL) -- (ML);
            \end{scope}
            \begin{scope}[shift = {(6,-3.5)}]
            \node at (-1.25,-1) {$=$};
            \node at (0.25,0) (e) {$e^\alpha_{a_i^{-1}a_j}$};
            \node at (0.25,-1) (M) {$M^\alpha_{a_ja_i^{-1},a_i}$};
            \node at (2.5,-1) (D) {$\Delta^\alpha_{a_j}$};
            \node at (4,0.25) (BU) {$\bullet$};
            \node at (4,-1) (BM) {$\bullet$};
            \node at (4,-2.25) (BL) {$\bullet$};
            \node at (5,0.25) (c1) {$c_1$};
            \node at (5,-1) (c2) {$c_2$};
            \node at (5,-2.25) (c3) {$c_3$};
            \draw[<-] (M) -- (e);
            \draw[<-] (M) -- ++(-0.75,-0.75);
            \draw[<-] (BU) -- (c1);
            \draw[<-] (BM) -- (c2);
            \draw[<-] (BL) -- (c3);
            \draw[->] (M) -- (D);
            \draw[->] (D) -- (BU);
            \draw[->] (D) -- (BM);
            \draw[->] (D) -- (BL);
            \end{scope}
            \begin{scope}[shift = {(6,-7)}]
            \node at (-1.25,-1) {$=$};
            \node at (1.25,-1) (e) {$e^\alpha_{a_j}$};
            \node at (0.25,-1) (eps) {$\epsilon^\alpha_{a_i}$};
            \node at (2.5,-1) (D) {$\Delta^\alpha_{a_j}$};
            \node at (4,0.25) (BU) {$\bullet$};
            \node at (4,-1) (BM) {$\bullet$};
            \node at (4,-2.25) (BL) {$\bullet$};
            \node at (5,0.25) (c1) {$c_1$};
            \node at (5,-1) (c2) {$c_2$};
            \node at (5,-2.25) (c3) {$c_3$};
            \draw[<-] (eps) -- ++(-0.75,0);
            \draw[<-] (BU) -- (c1);
            \draw[<-] (BM) -- (c2);
            \draw[<-] (BL) -- (c3);
            \draw[->] (e) -- (D);
            \draw[->] (D) -- (BU);
            \draw[->] (D) -- (BM);
            \draw[->] (D) -- (BL);
            \end{scope}
        \end{tikzpicture}
    \end{center}
    where here we use an unmarked $\Delta$ to mean $\Delta^\beta_b$ or $\Delta^\kappa_b$ for some $b\in G$ which makes sense in its context.
\end{proof}

\begin{lem}\label{crossedinvariance_lem}
    If $H^\alpha$ is crossed, then $Z(T;G;\mcal H, e)$ does not depend on the conjugacy class of the colors of the $T$.
\end{lem}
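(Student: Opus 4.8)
The plan is to show that conjugating every color $a_i$ by a fixed $h\in G$, i.e.\ replacing the coloring $(a_1,\dots,a_g)$ by $(ha_1h^{-1},\dots,ha_gh^{-1})$, leaves the trisection bracket $\langle T\rangle^G_{\mcal H,e}$ unchanged; since the genus of $\Sigma$ and the reference value $\zeta$ (computed from the $1$-colored $T_{\mathrm{st}}$) are untouched, this yields invariance of $Z$. First I would record that the conjugated coloring is again admissible: since $w_i^\beta(ha_\bullet h^{-1}) = h\,w_i^\beta(a_\bullet)\,h^{-1} = 1$ and likewise for $w_i^\kappa$, conjugation sends $G$-colored diagrams to $G$-colored diagrams, so the statement is well posed and it suffices to compare the two tensor networks built on the same underlying diagram.

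The main mechanism is the crossing $\phi_h$ of $H^\alpha$, viewed as the regrading operator that implements conjugation. A key preliminary observation is that every $\beta$--$\kappa$ crossing is supported in grading $1$ (the $\beta$-leg there carries $b_k=1$ and the $\kappa$-leg carries $d=1$), so the entire $\beta$--$\kappa$ pairing, together with the cointegrals $e^\beta\in H^\beta_1$ and $e^\kappa\in H^\kappa_1$, lives in the identity sector, on which every crossing is the identity ($\phi_1=\mathrm{id}$, since $\phi_1=\phi_1\phi_1$ and each $\phi_h$ is invertible). Thus only the $\alpha$-tensors and the $\alpha$--$\beta$, $\alpha$--$\kappa$ crossings are affected. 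On the $\alpha$-side I would apply $\phi^\alpha_h$ to each $\alpha$-tensor: using that $\phi^\alpha_h$ intertwines the comultiplication and carries the cointegral of $H^\alpha_{a_i}$ to that of $H^\alpha_{ha_ih^{-1}}$ (by one-dimensionality of $G$-cointegrals, so $\phi^\alpha_h(e^\alpha_{a_i})=e^\alpha_{ha_ih^{-1}}$), the $\alpha$-tensor for $a_i$ is carried precisely to the $\alpha$-tensor for $ha_ih^{-1}$.

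To absorb the resulting $\phi^\alpha_h$'s at the crossings I would transport the crossing across the doublet pairings. Using the induced Hopf $G$-algebra morphisms $H^\alpha\to (H^\beta)^{*,cop}$ and $H^\alpha\to (H^\kappa)^{*,cop}$ I would produce adjoint maps $\phi^\beta_h,\phi^\kappa_h$ on the sectors $H^\beta_{a_i^{\pm1}}$, $H^\kappa_{a_i^{\pm1}}$ characterized by $\langle \phi^\alpha_h x, \phi^\beta_h y\rangle^{\alpha,\beta} = \langle x,y\rangle^{\alpha,\beta}$ and the analogous relation for $\langle\_\rangle^{\kappa,\alpha}$. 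Dualizing the multiplicativity of $\phi^\alpha_h$ shows these adjoints intertwine $\Delta^\beta$ and $\Delta^\kappa$, and since they restrict to the identity on the grading-$1$ sector they fix $e^\beta$ and $e^\kappa$; the derived compatibility of the crossing with the antipode handles the factors $(S^\beta)^\nu$, $(S^\kappa)^\nu$ inserted at negative crossings. Applying $\phi^\beta_h,\phi^\kappa_h$ to the $\beta$- and $\kappa$-tensors then carries them to their conjugated counterparts exactly as $\phi^\alpha_h$ did on the $\alpha$-side, and the pairing-intertwining relations cancel all the $\phi$'s crossing-by-crossing, so the contraction value is unchanged.

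The step I expect to be the main obstacle is precisely this transport of the crossing across the pairings: the definition of a Hopf $G$-doublet only requires the induced map to be a Hopf $G$-algebra morphism, which a priori says nothing about crossings, so I must verify that a crossing on $H^\alpha$ does induce well-defined maps $\phi^\beta_h,\phi^\kappa_h$ (invoking nondegeneracy of the pairings in the finite-type setting) and that these genuinely satisfy the comultiplication, counit, cointegral, and antipode compatibilities needed to identify the transported $\beta$- and $\kappa$-tensors with the conjugated ones. Once this compatibility is established, the remainder is bookkeeping: the grading-$1$ data (the $\beta$--$\kappa$ crossings and both cointegrals) is fixed automatically by $\phi_1=\mathrm{id}$, and everything else reduces to the crossing axioms already recorded for $H^\alpha$.
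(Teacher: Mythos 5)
Your overall strategy matches the paper's: implement conjugation by the crossing $\phi^\alpha_h$, show it carries $e^\alpha_{a_i}$ to $e^\alpha_{ha_ih^{-1}}$, and observe that the identity-sector data ($e^\beta$, $e^\kappa$, and the $\beta$--$\kappa$ crossings) is untouched. One small point on the cointegral step: one-dimensionality of the space of $G$-cointegrals only gives $\phi^\alpha_h(e^\alpha_{a_i})=\lambda\, e^\alpha_{ha_ih^{-1}}$ for some nonzero scalar $\lambda$; the paper pins down $\lambda=1$ by evaluating with the counit, using $\epsilon_{hah^{-1}}\circ\phi_h=\epsilon_a$ together with $\epsilon_a(e^\alpha_a)=1$ from (co)semisimplicity. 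You should include that normalization.

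The substantive problem is the step you yourself flag as the main obstacle. Defining $\phi^\beta_h$ and $\phi^\kappa_h$ as adjoints characterized by $\langle \phi^\alpha_h x,\phi^\beta_h y\rangle^{\alpha,\beta}=\langle x,y\rangle^{\alpha,\beta}$ requires the bilinear forms to be nondegenerate, and nondegeneracy is \emph{not} part of the definition of a Hopf $G$-doublet, nor is it implied by finite type: for instance $\langle x,y\rangle=\epsilon(x)\epsilon(y)$ induces a perfectly good (trivial) Hopf $G$-algebra morphism but is degenerate, so no such adjoint exists. As written, this step does not follow from the stated hypotheses. To be fair, the paper's own proof is silent on exactly this point --- after establishing $\phi_h(e^\alpha_a)=e^\alpha_{hah^{-1}}$ it appeals only to ``the other axioms for a crossing'' without explaining how the crossing interacts with $\langle\_\rangle^{\alpha,\beta}$ and $\langle\_\rangle^{\alpha,\kappa}$ at the intersection points, which is precisely where the work lies. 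A route that avoids positing $\phi^\beta_h$ as a map on $H^\beta$ is to push every $\alpha$-paired leg through the induced morphism $H^\alpha\to (H^\beta)^{*,cop}$ and use its multiplicativity together with that of $\phi^\alpha_h$ to collect all the crossings into a single automorphism acting in the identity sector before evaluation against $e^\beta$; but even then one must argue why that residual identity-sector automorphism does not change the value, so some explicit compatibility between the crossing and the pairings (or an argument via the integrals) still has to be supplied rather than assumed.
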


\begin{proof}
    Suppose $T = (\Sigma,\alpha,\beta,\kappa)$ is a $G$-colored trisection diagram with color $(a_1,\dots, a_g)$. Denote by $T^b$ the conjugated $G$-colored trisection diagram with color $(ba_1 b^{-1},\dots, ba_g b^{-1})$.

    Suppose $H^\alpha$ admits the crossing $\phi = \{\phi_h : H^\alpha_a \to H^\alpha_{ha h^{-1}}\}_{h\in G}$. Now, since $H^\alpha$ is cosemisimple, we have that $\phi_h(e^\alpha_a) = e^\alpha_{hah^{-1}}$, indeed, $H^\alpha$ is cosemisimple if and only if $\epsilon_a(e^\alpha_a) = 1$ for all $a$ in $G$ where $H^\alpha_a\neq 0$, thus $\epsilon_{hah^{-1}}(e^\alpha_{hah^{-1}}) = 1 = \epsilon_{hah^{-1}}(\phi_h(e^\alpha_a))$, and since $\phi_h(e^\alpha_a)$ and $e^\alpha_{hah^{-1}}$ are nonzero cointegrals, they differ by a nonzero scalar (whenever $H^\alpha_a\neq 0$). The above then shows $e^\alpha_{hah^{-1}} = \phi_h\circ e^\alpha_a$.

    This, along with the fact that $\phi_b \circ \phi_{b^{-1}} = id$ and the other axioms for a crossing, give that $\langle T\rangle_{\mcal H,e}^G = \langle T^b\rangle_{\mcal H,e}^G$.
\end{proof}

\section{Invariants of Flat Bundles over 4-Manifolds}
\label{FlatBundlesInvariants_Sec}
\subsection{Flat Bundles over 4-Manifolds}

Let $G$ be a group. A \emph{flat $G$-bundle over a 4-manifold} is a principal $G$-bundle $\xi = (p:\tilde X\to X)$, where $X$ is a closed connected and oriented 4-manifold, which is flat, i.e. its transition functions are locally constant. $\tilde X$ is called the \emph{total space} and $X$ is called the \emph{base space} of $\xi$.

Two flat $G$-bundles $\xi$ and $\xi'$ are \emph{equivalent} if there exists a diffeomorphism $\tilde h:\tilde X\to \tilde X'$ between total spaces which preserves the action of $G$ and which induces an orientation-preserving diffeomorphism $h:X\to X'$ between their base spaces.

A flat $G$-bundle $\xi$ is \emph{pointed} if its total space $\tilde X$ is endowed with a base point $\tilde x$. Two pointed flat $G$-bundles are \emph{equivalent} if there is an equivalence between them preserving the base point. It is well-known that pointed flat bundles of 4-manifolds are in bijective correspondence with homomorphisms $\pi_1(X,x)\to G$, and if one ignores the basepoint, this becomes a correspondence with conjugacy classes.

The following presentation of the fundamental group of a closed connected orientable 4-manifold is well known.

\begin{lem}\label{fundgroupfromdiagram_lem}
Let $X$ be a closed oriented connected 4-manifold with trisection diagram $(\Sigma, \alpha,\beta,\kappa)$. Suppose $X$ has base point $x$ lying in $\Sigma$ away from the curve sets. Denote by $\hat{\alpha}_i$ simple closed curves in $\Sigma$ such that $\hat{\alpha}_i$ intersects $\alpha$ at $\alpha_i$ in one point, and intersects $\alpha_{i'}$ at no points whenever $i' \neq i$. Then
\[\pi_1(X,x) = \langle \{[\hat{\alpha}_i]\}_{i=1}^g \mid \{w_i^\beta([\hat{\alpha}_1],\dots [\hat{\alpha}_g]) = w_j^\kappa([\hat{\alpha}_1],\dots, [\hat{\alpha}_g]) = 1\}_{i,j = 1 }^g\rangle\] where $w_i^\beta(\alpha_1,\dots,\alpha_g)$ and $w_j^\kappa(\alpha_1,\dots,\alpha_g)$ are defined as the words in $F[\alpha]$ as in section \ref{ColoredTrisectionInvariants_Sec}.
\end{lem}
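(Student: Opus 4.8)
The plan is to read the presentation directly off the trisection decomposition of $X$, applying van Kampen's theorem twice. Write $X=X_{\alpha\beta}\cup X_{\beta\kappa}\cup X_{\kappa\alpha}$ for the three $4$-dimensional $1$-handlebodies of the trisection, indexed so that the boundary of $X_{\alpha\beta}$ is the Heegaard splitting $\mathcal{H}_\alpha\cup_\Sigma\mathcal{H}_\beta$ of $\#^k S^1\times S^2$, and similarly for the others; here $\mathcal{H}_\alpha,\mathcal{H}_\beta,\mathcal{H}_\kappa$ are the genus $g$ handlebodies in which the respective curve systems bound compressing disks, with $\Sigma=\mathcal{H}_\alpha\cap\mathcal{H}_\beta\cap\mathcal{H}_\kappa$. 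Since the base point $x$ lies on $\Sigma$, all groups below are based at $x$.

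First I would discard one sector. Set $A=X_{\kappa\alpha}\cup_{\mathcal{H}_\alpha}X_{\alpha\beta}$ and $B=X_{\beta\kappa}$, so that $X=A\cup B$ with $A\cap B=\mathcal{H}_\kappa\cup_\Sigma\mathcal{H}_\beta=\partial X_{\beta\kappa}$. Because $B$ is a boundary connected sum of copies of $S^1\times D^3$, the inclusion of its full boundary $\partial B=\#^k S^1\times S^2$ induces an isomorphism $\pi_1(A\cap B)\xrightarrow{\ \cong\ }\pi_1(B)$ (each summand contributes $\pi_1(S^1\times S^2)=\mathbb{Z}\xrightarrow{\cong}\pi_1(S^1\times D^3)=\mathbb{Z}$, and van Kampen preserves this under boundary connected sum). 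Van Kampen then gives $\pi_1(X)\cong\pi_1(A)*_{\pi_1(A\cap B)}\pi_1(B)\cong\pi_1(A)$, since amalgamating along an isomorphism contributes nothing new. Thus it remains to compute $\pi_1(A)$.

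Next I would analyze the two surviving sectors relative to the common handlebody $\mathcal{H}_\alpha$, whose fundamental group is free on the dual curves, $\pi_1(\mathcal{H}_\alpha)=F[\hat\alpha_1,\dots,\hat\alpha_g]$. By construction of the words, the class of $\beta_i$ (resp.\ $\kappa_j$) in this free group is exactly $w^\beta_i(\hat\alpha)$ (resp.\ $w^\kappa_j(\hat\alpha)$), as these record the $\alpha$-intersection sequences. The crucial computation is that the inclusion-induced map $\pi_1(\mathcal{H}_\alpha)\to\pi_1(X_{\alpha\beta})$ is the quotient by $\langle\langle w^\beta_i\rangle\rangle$: it factors as $\pi_1(\mathcal{H}_\alpha)\to\pi_1(\partial X_{\alpha\beta})\to\pi_1(X_{\alpha\beta})$, where the first map is the standard Heegaard presentation $\pi_1(\#^k S^1\times S^2)=F[\hat\alpha]/\langle\langle w^\beta_i\rangle\rangle$ and the second is the boundary $\pi_1$-isomorphism for the $1$-handlebody $X_{\alpha\beta}$. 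Symmetrically, $\pi_1(\mathcal{H}_\alpha)\to\pi_1(X_{\kappa\alpha})$ is the quotient by $\langle\langle w^\kappa_j\rangle\rangle$. A second application of van Kampen to $A$ then yields
\[\pi_1(A)\cong\bigl(F[\hat\alpha]/\langle\langle w^\kappa_j\rangle\rangle\bigr)*_{F[\hat\alpha]}\bigl(F[\hat\alpha]/\langle\langle w^\beta_i\rangle\rangle\bigr)\cong F[\hat\alpha]/\langle\langle w^\beta_i,\,w^\kappa_j\rangle\rangle,\]
the last step being the elementary fact that the amalgam of two quotients of a common group $F$ along $F$ is the quotient of $F$ by the join of the two kernels. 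Combined with $\pi_1(X)\cong\pi_1(A)$, this is the asserted presentation.

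I expect the main obstacle to be the identification of $\pi_1(\mathcal{H}_\alpha)\to\pi_1(X_{\alpha\beta})$ as \emph{precisely} the quotient by $\langle\langle w^\beta_i\rangle\rangle$, with no extra relations, which rests on two facts that deserve explicit justification: that the boundary of a $4$-dimensional $1$-handlebody is $\pi_1$-injective (so the second map in the factorization is an isomorphism, not merely a surjection), and that the meridians of $\mathcal{H}_\beta$ are exactly the normal generators of the Heegaard-splitting kernel. I would also verify the mild point-set hypotheses for van Kampen (passing to open thickenings of the sectors and noting that all intersections are path-connected because they contain $\Sigma$), which hold automatically here.
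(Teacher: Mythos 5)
Your argument is correct. Note that the paper itself offers no proof of this lemma — it is stated as ``well known'' and left uncited — so there is nothing to compare against except the standard folklore argument, which is essentially what you have reconstructed: discard the sector whose boundary includes no $\alpha$-handlebody, use that $\partial(\natural^k S^1\times D^3)\hookrightarrow \natural^k S^1\times D^3$ is a $\pi_1$-isomorphism to kill that sector's contribution, and then amalgamate the remaining two sectors over $\mathcal{H}_\alpha$, each contributing the normal closure of one family of words. Both van Kampen steps are applied correctly (pushout along an isomorphism is trivial; pushout of two surjections out of a common free group is the quotient by the product of the kernels), and you correctly identify the two facts that carry the weight: the $\pi_1$-isomorphism for the boundary of a $4$-dimensional $1$-handlebody and the Heegaard presentation $\pi_1(\mathcal{H}_\alpha\cup_\Sigma\mathcal{H}_\beta)\cong F[\hat\alpha]/\langle\langle w^\beta_i\rangle\rangle$. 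Two minor points you could make explicit: the class of $\beta_i$ in $\pi_1(\mathcal{H}_\alpha,x)$ is only $w^\beta_i(\hat\alpha)$ up to conjugation (the word is read off from the base point of $\beta_i$, not from $x$), which is harmless since only normal closures enter; and the identification of $[\beta_i]$ with the intersection word uses that cutting $\mathcal{H}_\alpha$ along the $\alpha$-compressing disks yields a ball, so that the signed sequence of disk crossings determines the homotopy class. Neither affects the validity of the proof.
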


\subsection{The Invariant}

Let $G$ be a group with identity $1$. Let $(\xi,\tilde x)$ be a pointed flat $G$-bundle over the 4-manifold $X$. Let $f:\pi_1(X,x)\to G$ be the monodromy, and set $a_i = f([\hat{\alpha}_i])$. This is a color for the trisection diagram $T$ by lemma \ref{fundgroupfromdiagram_lem}, and we say that $T$ is \emph{colored by $f$}. Set $Z((\xi,\tilde x);G;\mcal H,e) = Z(T;G;\mcal H, e)$. 

The following theorem mimics that of \cite[Theorem 2]{V05}, generalized to 4-manifolds:
\begin{thm}\label{thm:main}
    \begin{enumerate}[(a)]
    \item $Z((\xi,\tilde x);G;\mcal H, e)$ is an invariant of pointed flat bundles over 4-manifolds.
    \item The function taking $\tilde x\in \tilde X \mapsto Z((\xi,\tilde x);G;\mcal H,e)$ is constant on the path components of $\tilde X$. 
    \item If $H^\alpha$ is crossed, or $G$ is abelian, or the monodromy of $\xi$ is surjective, then $Z((\xi,\tilde x);G;\mcal H,e)$ does not depend on the base point $\tilde x$.
    \end{enumerate}
\end{thm}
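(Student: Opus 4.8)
The plan is to reduce all three parts to the invariance already proved for the trisection bracket (Theorem \ref{coloredtrisectioninvariance_thm}), using the Gay--Kirby theorem (Theorem \ref{kirbygay_thm}) for well-definedness and isolating one algebraic fact — invariance of $Z$ under conjugating every color by an element in the image of the monodromy — as the engine for (b) and (c).

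\emph{Part (a).} First I would check the coloring is legitimate: the words $w_i^\beta,w_i^\kappa$ are relators of $\pi_1(X,x)$ by Lemma \ref{fundgroupfromdiagram_lem}, so applying the homomorphism $f$ gives $w_i^\beta(a_1,\dots,a_g)=w_i^\kappa(a_1,\dots,a_g)=1$ and $T$ colored by $f$ is a genuine $G$-colored diagram. Well-definedness then has two sources. The choice of diagram for $X$: by Theorem \ref{kirbygay_thm} any two are related by trisection moves and isotopies, so it suffices to observe that each move changes the generators $[\hat\alpha_i]$ of Lemma \ref{fundgroupfromdiagram_lem} in a prescribed way, and that applying $f$ reproduces exactly the color change built into the definition of equivalence (for instance, a handle slide sending $[\hat\alpha_j]\mapsto[\hat\alpha_i]^{-1}[\hat\alpha_j]$ produces $a_j\mapsto a_i^{-1}a_j$); hence recoloring by the same $f$ yields an equivalent $G$-colored diagram and $Z$ is unchanged by Theorem \ref{coloredtrisectioninvariance_thm}. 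The choice of orientations and base points on individual curves is absorbed by move $(2)$ and the cyclicity of the integrals. For the second source, a pointed equivalence $\tilde h$ covering $h\colon X\to X'$ pushes a trisection diagram of $X$ forward to one of $X'$; since $h$ is an orientation-preserving diffeomorphism intertwining the monodromies, the result differs from a diagram of $\xi'$ only by move $(1)$. This gives (a).

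\emph{Reduction for (b) and (c).} Realizing $\tilde X=(\widehat X\times G)/\pi_1(X,x)$ as the associated flat bundle of $f$, a base point in the fiber over $x$ has the form $\tilde x_0\cdot g$ and carries monodromy $g^{-1}f(\cdot)g$; two such points lie in the same path component exactly when they differ by an element of the holonomy group $\Img(f)$. Transporting an arbitrary $\tilde x$ along a path to this fiber and comparing through the induced isomorphism of fundamental groups then shows: base points in the \emph{same} path component give monodromies related by conjugation by some $k\in\Img(f)$, while two \emph{arbitrary} base points give monodromies related by conjugation by an arbitrary $k\in G$. Writing $T^k$ for the diagram with colors $ka_ik^{-1}$, statement (b) follows from the claim $Z(T^k;G;\mcal H,e)=Z(T;G;\mcal H,e)$ for $k\in\Img(f)$, and (c) reduces to the same claim for an enlarged set of $k$: the abelian case is vacuous, the surjective case has $\Img(f)=G$ and so coincides with (b), and the crossed case is precisely Lemma \ref{crossedinvariance_lem}, which already supplies invariance for all $k\in G$.

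\emph{The core claim and the main obstacle.} Since $\Img(f)$ is generated by $a_1,\dots,a_g$ and conjugation is multiplicative in $k$, it is enough to prove $Z(T^{a_i})=Z(T)$ for a single generator. Conjugating every color by $a_i=f([\hat\alpha_i])$ is the same as holding $f$ fixed while replacing the dual generating curves $\hat\alpha_j$ by $\hat\alpha_i\hat\alpha_j\hat\alpha_i^{-1}$, i.e.\ precomposing $f$ with the inner automorphism $c_{[\hat\alpha_i]}$ of the presentation. I would realize this change of generating system by a finite sequence of colored handle slides over $\alpha_i$ (together with auxiliary orientation reversals), each of which preserves the bracket by Theorem \ref{coloredtrisectioninvariance_thm}. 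I expect the delicate point to be exactly this step: a single slide only produces the one-sided multiplication $a_j\mapsto a_i^{-1}a_j$, so one must assemble slides into a sequence realizing full conjugation $a_j\mapsto a_i a_j a_i^{-1}$ while verifying that every intermediate diagram remains a valid $G$-colored trisection diagram and that $a_i$ itself is restored. This is also the conceptual reason (b) needs no crossed hypothesis whereas (c) does: the curve $\alpha_i$ provides an internal ``wire'' along which conjugation by the image element $a_i$ can be routed, whereas conjugation by an arbitrary $g\in G$ has no such realization inside the diagram and must instead be supplied externally by the crossing $\phi_g$ of Lemma \ref{crossedinvariance_lem}.
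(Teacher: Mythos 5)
Your part (a) and the crossed/abelian cases of part (c) follow the paper's proof essentially verbatim: legitimacy of the coloring via Lemma \ref{fundgroupfromdiagram_lem}, Theorem \ref{kirbygay_thm} together with a move-by-move check that recoloring by the same monodromy reproduces the defined color changes, and Lemma \ref{crossedinvariance_lem} for the crossed case. Where you diverge is part (b) (and hence the surjective case of (c), which both you and the paper reduce to (b)). The paper's argument for (b) is soft: a path from $\tilde x$ to $\tilde x'$ projects to a path in $X$, an ambient isotopy supported in a tubular neighborhood of that path pushes $x$ to $x'$, and its lift is a $G$-equivariant self-diffeomorphism of $\tilde X$ carrying $\tilde x$ to $\tilde x'$ --- i.e.\ an equivalence of pointed flat bundles --- so (b) is an immediate corollary of (a). No conjugation-invariance of the bracket is needed for base points in the same path component.

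You instead reduce (b) to the claim $Z(T^k)=Z(T)$ for $k\in\Img(f)$ and propose to prove it, generator by generator, by realizing $a_j\mapsto a_ia_ja_i^{-1}$ through handle slides over $\alpha_i$ combined with orientation reversals. This is where the gap sits. The colored handle-slide rule only ever left-multiplies $a_j^{\pm1}$ by $a_i^{\pm1}$ (with the sign dictated by the orientation-parity convention), and each slide also modifies the underlying curve system; to conclude you must exhibit a sequence of slides whose geometric effect cancels up to isotopy while the composite color effect is exactly conjugation, and the parity convention couples these two requirements --- the orientation data that makes a second slide the geometric inverse of the first also determines whether its color contribution is $a_i\cdot(-)$ or $a_i^{-1}\cdot(-)$. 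You flag this as ``the delicate point'' but do not resolve it, and in your approach it is the entire content of (b). The claim itself is true (it follows from the paper's (a) and (b) combined), and your route is plausibly completable, but as written part (b) is not proved; the paper's base-point isotopy argument sidesteps the issue entirely and is the argument you should adopt.
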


\begin{proof}
    Let us begin by proving part (a). Let $(X, x,f)$ and $(X',x',f')$ determine two equivalent pointed flat $G$-bundles over 4-manifolds. Let $T$ (resp. T') be an oriented trisection diagram for $X$ (resp. $X'$) colored by $f$ (resp. $f'$). By theorem \ref{coloredtrisectioninvariance_thm}, we just need to show that $T$ and $T'$ are equivalent $G$-colored trisection diagrams.

    Since $(X,x,f)$ and $(X',x',f')$ are equivalent, there exists a pointed orientation-preserving diffeomorphism $h:X\to X'$ with $f = f'\circ h_*$, where $h_*:\pi_1(X,x) \to \pi_1(X',x')$ is the induced map. By theorem \ref{kirbygay_thm}, $T$ and $T'$ are related by a finite series of trisection moves and isotopies. We orient the trisection curves arbitrarily and note that we may assume the orientation-preserving diffeomorphism $h$ preserves the orientation of the circles, since swapping the orientation amounts to changing the color by its inverse. Without loss of generality, assume that $T$ and $T'$ differ by only one trisection move, then denote the genus of $T$ (resp. $T'$) by $g$ (resp. $g'$) and denote the color of $T$ (resp. $T'$) by $a = (a_1, \dots ,a_g)$ (resp. $a' = (a_1', \dots ,a'_{g'})$.

    For a trisection move of type 1 (handle slides), we first assume that $\beta_i$ (resp. $\kappa_i$) is slid over $\beta_j$ (resp. $\kappa_j$). Note that since the handle slide occurs along a band which is disjoint from the trisection curves and we can assume it is disjoint from the basepoint, the result of this slide amounts to inserting a word $w_j^\beta$ or $(w_j^{\beta})^{-1}$ into $w_i^\beta$. Since $w_j^\beta = 1$, the $G$-colored diagrams are clearly equivalent. Now assume that $\alpha_i$ is slid over $\alpha_j$ away from the basepoint. Let $\gamma_i$ (resp. $\gamma_j$) be a loop based at $x$ before the handle slide intersecting $\alpha_i$ (resp. $\alpha_j$) positively at one point and intersecting no other $\alpha$ circles. After the handle slide, set $\gamma_i' = h(\gamma_i)$ and set $\gamma_j'$ to be a loop comprised of a path from $x'$ through the core of the sliding band then back to $x'$ which intersects $\alpha_j$ once positively, and does not intersect any other $\alpha$ curve. Then \[a_i' = f'(\gamma_i') = f'\circ h(\gamma_i) = f(\gamma_i) = a_i\]
    Now, $\gamma_j'$ is homotopic to the loop $h(\gamma_i)^{-1}h(\gamma_j)$, and thus 
    \begin{align*}
        a_j' &= f'(\gamma_j')\\
        &= f'(h(\gamma_i)^{-1}h(\gamma_j))\\
        &= f(\gamma_i)^{-1}f(\gamma_j)\\
        &= a_i^{-1}a_j
    \end{align*}

    For a move of type 2 (stabilization), denote the new $\alpha'$ circles by $\{C^1_\alpha,C^2_\alpha,C^3_\alpha\} \in \alpha'$. Notice that $\alpha' = h(\alpha) \cup \{C^i_\alpha\}_{i=1}^3$, where the color of $\alpha_i'$ agrees with $\alpha_i$ whenever $i\in \{1,\dots ,g\}$. Let $C_i$ be the ($\beta$ or $\kappa$ circle) which intersects $C^i_\alpha$ at exactly one point in the genus 3 trisection diagram for $S^4$. Let $\gamma_i$ be a path based at $x'$ which has endpoint in $C_i$, intersecting positively, and does not intersect any other $\alpha'$ circles or $C_i$ at any other point. Then the path $\gamma_iC_i\gamma_i^{-1}$ is a loop based at $x'$ which intersects $C^i_\alpha$ at exactly one point (WLOG positively) and does not intersect any other $\alpha'$ circles. Since $C_i$ bounds a disk, this loop is trivial, and hence each $C^i_\alpha$ is colored by 1. 

    A move of type 3 is the inverse of type 2, hence we omit the discussion.

    For an isotopy of the trisection diagram we note that this does not change the homotopy class of any generators of $\pi_1(X,x)$, and hence all colors will remain the same. Recall that $T$ and $T'$ are diffeomorphic after a sequence of two- and three-point moves. In particular, isotopies of the diagrams are generated by two- and three-point moves.

    This entire discussion proves that $T$ and $T'$ differ by a finite sequence of $G$-colored trisection diagram moves and hence are equivalent $G$-colored trisection diagrams. This proves part (a).

    We move on to prove part (b). Note that if $\tilde x$ and $\tilde x'$ lie in the same path component, then pushing $\tilde x_1$ to $\tilde x_2$ along a path connecting the two inside a tubular neighborhood of the path yields an equivalence of pointed flat $G$-bundles. Hence the invariants are the same.

    For part (c), let $\tilde x$ and $\tilde x'$ be two points in the total space of a flat bundle $\xi = (p:\tilde X\to X)$. Let $x = p(\tilde x)$ and $x' = p(\tilde x')$ and let $\gamma$ be a path from $x$ to $x'$ in $X$. Lift $\gamma$ to $\tilde \gamma$ and let $\tilde z$ be its endpoint. Then $p(\tilde z) = p(\tilde x') = x'$, so there exists an element $b\in G$ with $\tilde x' = b\cdot \tilde z$. Suppose the monodromy of $\xi$ is surjective, then there exists a loop $\sigma$ based at $x'$ such that $f([\sigma]) = b$. Let $\tilde \sigma$ be its lift starting at $\tilde z$. Then since $\tilde x' = b\cdot \tilde z = f([\sigma]) \cdot \tilde z$, then $\tilde \sigma$ must be a path from $\tilde z$ to $\tilde x'$. Hence $\tilde \sigma\circ \tilde\gamma$ is a path from $\tilde x$ to $\tilde x'$, so by part (b), the invariants are the same.

    Now suppose $H^\alpha$ is crossed. Since $\tilde x' = b\cdot \tilde z$, the monodromies $f_{\tilde x'}, f_{\tilde z}:\pi_1(X,x)\to G$ determined by $\tilde x'$ and $\tilde z$, respectively, are conjugate, so that $f_{\tilde x'} = bf_{\tilde z}b^{-1}$. Let $T$ be a trisection diagram for $X$ and denote by $T_{\tilde x'}$ and $T_{\tilde z}$ the colored trisection diagrams colored by $f_{\tilde x'}$ and $f_{\tilde z}$, respectively. Since the monodromies are conjugate, the colors of the diagrams are conjugate as well. Hence by lemma \ref{crossedinvariance_lem} and part (a), the invariants are the same.

    If $G$ is abelian, then $H^\alpha$ is crossed, so we are done.
\end{proof}

\begin{rmk}
    Note that if $G$ is finite, then we can obtain an invariant of 4-manifolds on the nose by summing over equivalence class of pointed bundles \[Z(X;G;\mcal H, e) = \sum_{\text{bundles }(\xi,\tilde x)} Z((\xi,\tilde x);G;\mcal H, e)\]
\end{rmk}

\section{Relationship to Mochida's Invariant}
\label{MochidaInvariant_Sec}
We begin this section by recalling the invariant of flat bundles of 4-manifolds defined by Mochida in \cite{Moc24}.

Let $\mcal H = (H,\phi,R)$ be an involutory quasitriangular Hopf $G$-algebra of finite type, where $\phi$ is a crossing on $H$ and $R$ is a universal $R$-matrix in $H_1\otimes H_1$. Let $e = \{e_a\}_{a\in G}$ be a two-sided cointegral and $\mu$ a two-sided integral such that $\mu(e_1) = 1$ and $\epsilon_1(e_1) = 1$. Let $f:\pi_1(X,x)\to G$ be the monodromy describing a pointed flat bundle for a 4-manifold $X$ and let $L = L_1\cup L_2$ be a Kirby diagram of $X$ with $L_1$ the dotted circles and $L_2$ the undotted circles. By a coloring of $L$ we mean an assignment of an element of $G$ to each dotted circle in $L_1$ satisfying the relations outlined in \cite[Section 3.1.1]{Moc24}. Then we define an invariant of the bundle as follows:

For each dotted circle $\alpha$ of $L_1$ colored by $a\in G$, let $q_1, \dots, q_k$ be the intersections of the disk bounded by $\alpha$ with the undotted components passing through it, listed from left to right. Assign to them the tensor \[S_a^{\sigma_{q_1}}(e_a^{(1)})\otimes \cdots \otimes S_a^{\sigma_{q_k}}(e_a^{(k)})\] so that the $i$-th component of the tensor is assigned to $q_i$. If there are no undotted components passing through $\alpha$, then the contribution is $\epsilon_a(e_a)$, which is 1 if $H_a\neq 0$ and 0 if $H_a = 0$.

For each crossing of the undotted components, let $q_1$ and $q_2$ be points on the over and under strands near the crossing, respectively. If the crossing is positive, we assign the universal $R$-matrix $R = s_i \otimes t_i\in H_1\otimes H_1$ so that the first component is assigned to $q_1$ and the second to $q_2$. If the crossing is negative, we assign $R^{-1} = S(s_i)\otimes t_i$ in the same manner as in the positive setting.

We evaluate the product of these assigned elements on each undotted component by the integral $\mu$ and define $\langle L,f\rangle_{\mcal H}$ to be the product of the evaluations for all undotted components. We then obtain an invariant $I_{\mcal H}(X,f,L)$ by normalizing: \[I_\mcal{H}(X,f,L) = \dim(H_1)^{|L_1|-|L_2|} \langle L,f\rangle_{\mcal H}\]

\begin{thm}
    Let $\mcal H$ be the Hopf $G$-triplet defined as in lemma \ref{Mochida_Triplet_Lem} and let $\mu$ and $\{e_a\}_{a\in G}$ be a two-sided integral for $H_1$ and a two-sided cointegral for $H$, respectively, such that $\mu(e_1) = 1$ and $\epsilon_1(e_1) = 1$. Then for $(\xi,\tilde x)$ a flat bundle over the 4-manifold $X$ with monodromy $f$, \[Z((\xi,\tilde x);G;\mcal H,e) = I_{\mcal H}(X,f,L)\] where $L$ is some Kirby diagram for $X$, and $I_{\mcal H}(X,f,L)$ is the invariant of flat bundles described by \cite{Moc24}.
\end{thm}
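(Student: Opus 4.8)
The plan is to prove the equality by choosing compatible diagrams and matching the two tensor networks locally. Since both $Z((\xi,\tilde x);G;\mcal H,e)$ (via Theorem \ref{thm:main} and Theorem \ref{coloredtrisectioninvariance_thm}) and $I_{\mcal H}(X,f,L)$ are invariants of the underlying pointed flat bundle, it suffices to verify the identity for a single pair: a trisection diagram $T$ for $X$ colored by $f$, together with a Kirby diagram $L$ for $X$ colored by $f$ that is obtained from $T$ by the standard passage between a trisection diagram and a handle decomposition of $X$. First I would record the algebraic dictionary coming from Lemma \ref{Mochida_Triplet_Lem}: here $H^\alpha = H^{cop}$, $H^\beta = H^\kappa = H^*$, the $\beta$--$\kappa$ pairing is $\langle\_\rangle_R$, and the $\alpha$--$\beta$ and $\alpha$--$\kappa$ pairings are the canonical evaluation pairings of $H^{cop}$ against $H^*$. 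Under the duality between Hopf $G$-algebras and Hopf $G$-coalgebras, a $G$-integral of $H^*$ is a $G$-cointegral of $H$, and a cointegral of $H^*_1 = (H_1)^*$ is an integral of $H_1$; consequently the integral data $e$ for the triplet is forced to be $e^\alpha_a = e_a$ (the two-sided cointegral of $H$, on which the $cop$ has no effect since it is two-sided) and $e^\beta = e^\kappa = \mu$ (the two-sided integral of $H_1$). This is precisely the data appearing in Mochida's construction.

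Next I would fix the geometric correspondence. Using the standard procedure for reading a handle decomposition off a trisection, the $\alpha$-curves become the dotted $1$-handle circles of $L_1$, and by Lemma \ref{fundgroupfromdiagram_lem} they carry exactly the colors $a_i = f([\hat\alpha_i])$ assigned to the dotted circles in Mochida's coloring; the $\beta$- and $\kappa$-curves together encode the undotted $2$-handle circles of $L_2$, with a parallel $\beta$/$\kappa$ pair tracing out a $2$-handle attaching circle and the mutual $\beta$--$\kappa$ crossings becoming self-crossings of the undotted components. The purpose of the doubling $H^\beta = H^\kappa = H^*$ paired by the $R$-matrix is exactly to produce a crossing weight at each such crossing.

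With the dictionary in place, I would contract the trisection network $\langle T\rangle^G_{\mcal H,e}$ and show it reduces termwise to $\langle L,f\rangle_{\mcal H}$. The tensors $\Delta^\beta$, $\Delta^\kappa$ are the duals of the multiplication of $H$, so a $\beta$- (resp.\ $\kappa$-) strand carrying $e^\beta = \mu$ (resp.\ $\mu$) computes the evaluation of an ordered product of $H$-elements against $\mu$, matching ``evaluate the product on each undotted component by $\mu$.'' Each positive $\beta$--$\kappa$ crossing contributes $\langle\_\rangle^{\beta,\kappa} = \langle\_\rangle_R$, which by Lemma \ref{Mochida_Triplet_Lem} inserts $R\in H_1\otimes H_1$ with its first leg on the $\kappa$ (over) strand and second on the $\beta$ (under) strand, reproducing Mochida's assignment; a negative crossing carries an extra $S^\beta_1 = (S_1)^*$, and since $R^{-1} = (S_1\otimes \mathrm{id})(R) = (\mathrm{id}\otimes S_1)(R)$ this reproduces the $R^{-1}$ assignment irrespective of which leg carries the antipode. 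The $\alpha$--$\beta$ and $\alpha$--$\kappa$ crossings contract via the canonical pairing, feeding the legs of $\Delta^\alpha_{a}(e_a)$ into the $\beta$/$\kappa$ strands, while the antipode factors $(S^{\beta}_{\cdots})^{\nu}$, $(S^{\kappa}_{\cdots})^{\nu}$ at negative crossings transpose to antipodes acting on the cointegral legs, matching $S_a^{\sigma_{q_1}}(e_a^{(1)})\otimes\cdots\otimes S_a^{\sigma_{q_k}}(e_a^{(k)})$. Assembling these identifications yields $\langle T\rangle^G_{\mcal H,e} = c\cdot\langle L,f\rangle_{\mcal H}$, where $c$ is an explicit power of $\dim(H_1)$ arising from the closed $\beta$--$\kappa$ loops created in the reduction.

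Finally I would reconcile the normalizations. Evaluating $\langle T_{\mathrm{st}}\rangle^G_{\mcal H,e}$ on the genus-$3$ sphere trisection gives $\zeta^3$, and the same local identifications show $\zeta$ is a fixed power of $\dim(H_1)$; combining this with the factor $c$ and the standard numerical relation between the trisection genus $g(\Sigma)$ and $|L_1|-|L_2|$ for the associated handle decomposition shows that $\zeta^{-g(\Sigma)}$ matches $\dim(H_1)^{|L_1|-|L_2|}$. The hard part will be the geometric bookkeeping: rigorously establishing the trisection-to-Kirby dictionary so that every local piece (dotted circles, undotted circles, and each type of crossing) corresponds correctly, and tracking all auxiliary $\dim(H_1)$ factors together with the antipode and orientation conventions, so that the two normalizations agree on the nose rather than merely up to an overall constant.
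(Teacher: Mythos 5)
Your proposal is correct in outline and takes essentially the same route as the paper: reduce to one compatible pair of diagrams using the invariance theorems, match the two tensor networks locally (the $R$-pairing producing Mochida's crossing weights, the $\alpha$-cointegral comultiplication producing his dotted-circle weights), and then reconcile the powers of $\dim(H_1)$ coming from $\langle T_{\mathrm{st}}\rangle$ and from $|L_1|-|L_2|$. The one place your dictionary drifts from how the paper actually implements it is at the link crossings: the paper pins down the correspondence via Kepplinger's explicit Kirby-to-trisection algorithm \cite{Kep22}, in which each crossing of the undotted link is resolved by inserting a cancelling $\alpha$--$\beta$ pair, so the over-strand $\kappa$ curve meets a $\beta$ curve and the under-strand $\kappa$ curve meets an $\alpha$ curve, and the $R$-matrix on the two link strands emerges only after contracting the inserted pair's cointegral through both pairings --- rather than from a direct $\beta$--$\kappa$ self-crossing of a parallel attaching pair as you describe; this is precisely the ``geometric bookkeeping'' you flag as the remaining work, and it also produces the extra $\dim(H_1)$ factors (one per dotted circle and one per appended parallel $\kappa$ curve) that the paper tracks to make the normalizations agree on the nose.
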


\begin{proof}
    We begin this proof by reviewing two algorithms between trisection diagrams and Kirby diagrams. These are described in \cite{Kep22} and are reviewed here:

    \begin{enumerate}[{Algorithm} 1:]
        \item Given a Kirby diagram describing a closed oriented 4-manifold $X$, which comprises $k$ 1-handles and a framed link $L$ consisting of $l$ knots $L_1,\dots ,L_l$, one obtains a trisection diagram describing $X$ by applying the following algorithm:
        \begin{enumerate}
            \item Replace the projection of the attaching regions of the 4-dimensional 1-handles by attaching regions of 3-dimensional 1-handles (a pair of 2 disks) with a parallel $\alpha$-$\beta$ curve pair drawn around one of the disks.
            \item For every knot in the link $L$ that does not have an overcrossing, introduce a $+1$- then a $-1$-kink. In other words we do a Reidemeister-2 move between two strands of the same knot.
            \item Introduce a number of $\pm 1$ kinks equal to the sign and numerical value of the blackboard framing.
            \item Replace all intersections with a cancelling $\alpha$-$\beta$ pair where the overcrossing strand intersects only the $\beta$ curve only once and the undercrossing strand intersects only the $\alpha$ curve only once. Note that this increases the number of $\alpha$ and $\beta$ curves from $l$ to some $g\in \mbb N$.
            \item Do handle slides among the $\alpha$-curves and rename them such that $\alpha_i$ intersects only $L_i$ only once.
            \item Rename $L_j$ to $\kappa_j$ and color them green.
            \item Draw a green curve parallel to every unpaired $\alpha$ curve and call these new green curves $\kappa_{l+1},\dots,\kappa_g$.
        \end{enumerate}

        \item Let $(\Sigma_g,\alpha,\beta,\kappa)$ be a trisection diagram of a closed oriented 4-manifold $X$ in $(\alpha,\beta)$-standard form (i.e. $(\Sigma_g,\alpha,\beta)$ forms a $(g-k)$-stabilized standard Heegaard diagram for $\#^k S^1\times S^2$, where a standard Heegaard diagram for $S^1\times S^2$ is a parallel pair of $\alpha$-$\beta$ curves. Then one obtains a Kirby diagram describing $X$ by applying the following algorithm:
        \begin{enumerate}
            \item Replace every parallel $\alpha$-$\beta$ pair with a dotted circle.
            \item Delete all other $\alpha$-$\beta$ pairs.
            \item Regard all $\kappa$ curves as a framed link, where the framing is induced by the surface $\Sigma$.
            \item Delete the surface $\Sigma_g$.
        \end{enumerate}
    \end{enumerate}

    Our interest lies mainly with algorithm 1. Let $L$ be a colored Kirby diagram representing the flat bundle $(\xi,\tilde x)$. Let $T(L)$ be the (not necessarily trisection) diagram obtained by performing only steps (a) through (d) of algorithm 1. Note that there is a canonical way to color $T(L)$ so that finishing algorithm 1 yields a flat bundle which is equivalent to the flat bundle obtained by coloring $K$ by the monodromy.
    
    The following picture demonstrates a positive link crossing in $T(L)$:

    \begin{center}
        \begin{tikzpicture}
            \draw[->,darkgreen,ultra thick] (-1,1) -- (1,-1);
            \draw[darkgreen,ultra thick] (1,1) -- (0.5,0.5);
            \draw[darkgreen,->,ultra thick] (-0.5,-0.5) -- (-1,-1);
            \draw[blue,ultra thick] (0.5,0.5) -- (-0.5,-0.5);
            \draw[draw=black, fill=lightgray] (0.5,0.5) circle (0.25);
            \draw[draw=black, fill=lightgray] (-0.5,-0.5) circle (0.25);
            \draw[draw=red,ultra thick] (-0.5,-0.5) circle (0.25);
        \end{tikzpicture}
    \end{center}

    The following tensor is assigned to this picture:
    
    \begin{center}
        \begin{tikzpicture}
            \node at (0.5,0.5) (M) {$M_{1,1}$};
            \node at (1.5,0.5) (mu) {$\mu$};
            \node at (-1,0.5) (R) {$R^{-1}$};
            \node at (-1,-1) (D) {$\Delta_1$};
            \node at (0,-1) (e) {$e$};
            \draw[->] (M) -- (mu);
            \draw[<-] (M) -- (D);
            \draw[<-] (M) -- (R);
            \draw[->] (R) -- ++(-1,0);
            \draw[->] (D) --  ++(-1,0);
            \draw[->] (e) -- (D);
            \node at (-1.5,0.75) {\tiny{1}};
            \node at (2,0) {$=$};
            \begin{scope}[shift = {(4,0)}]
            \node at (0,0) (R) {$R$};
            \draw[->] (R) -- ++(-1,0.5);
            \draw[->] (R) -- ++(-1,-0.5);
            \node at (-0.5,0.5) {\tiny{1}};
            \end{scope}
        \end{tikzpicture}
    \end{center}

    Hence the R-matrix is assigned to a crossing, with the first leg of $R$ assigned to the overcrossing strand. For a negative crossing of the Kirby link, the tensor is given by the inverse R-matrix $R^{-1}$, with the first leg assigned to the overcrossing. For a parallel pair of $\alpha$ and $\beta$ curves, we have the following representative picture in $T(L)$:

    \begin{center}
        \begin{tikzpicture}
            \draw[red, ultra thick, decoration={markings, mark=at position 0.5 with {\arrow{>}}}, postaction={decorate}] (0,0) arc(90:270:0.5 and 1);
            \draw[blue,ultra thick, decoration={markings, mark=at position 0.5 with {\arrow{<}}}, postaction={decorate}] (0.75,0) arc(90:270:0.5 and 1);
            \draw[dashed, red, ultra thick] (0,0) arc(90:-90:0.5 and 1);
            \draw[dashed, blue, ultra thick] (0.75,0) arc(90:-90:0.5 and 1);
            \draw[->, darkgreen, ultra thick] (-2,-0.5) -- (2,-0.5);
            \draw[->, darkgreen, ultra thick] (-2,-1.5) -- (2,-1.5);
            \node at (-0.25,-1) {$a_i$};
        \end{tikzpicture}
    \end{center}

    The following tensor is assigned to this picture:

    \begin{center}
        \begin{tikzpicture}
            \node at (-1,0) (D) {$\Delta_{a_i}$};
            \node at (-2,0) (e) {$e_{a_i}$};
            \node at (3,0) (M) {$M_{1,1}$};
            \node at (0.5,1) (MU) {$M_{a_i,1}$};
            \node at (0.5,-1) (ML) {$M_{1,a_i}$};
            \node at (4,0) (mu) {$\mu$};
            \node at (2,0.5) (R1) {$R$};
            \node at (2,-0.5) (R2) {$R$};
            \node at (1.5,0.5) {\tiny{1}};
            \node at (1.5,-0.5) {\tiny{1}};
            \draw[<-] (D) -- (e);
            \draw[->] (M) -- (mu);
            \draw[->] (R1) -- (M);
            \draw[->] (R2) -- (M);
            \draw[->] (R2) -- (ML);
            \draw[->] (R1) -- (MU);
            \draw[->] (D) -- (MU);
            \draw[->] (D) -- (ML);
            \draw[->] (MU) -- ++(0,0.5);
            \draw[->] (ML) -- ++(0,0.5);
        \end{tikzpicture}
        
        \begin{tikzpicture}
            \node at (-3,0) {$=$};
            \node at (-1,0) (D) {$\Delta_{a_i}$};
            \node at (-2,0) (e) {$e_{a_i}$};
            \node at (0.5,1) (MU) {$M_{a_i,1}$};
            \node at (0.5,-1) (ML) {$M_{1,a_i}$};
            \node at (4,0) (mu) {$\mu$};
            \node at (3,0) (R) {$R$};
            \node at (2,0) (D2) {$\Delta_1^{op}$};
            \node at (2.6,0.2) {\tiny{1}};
            \draw[->] (e) -- (D);
            \draw[->] (D) -- (MU);
            \draw[->] (D) -- (ML);
            \draw[->] (D2) -- (MU);
            \draw[->] (D2) -- (ML);
            \draw[->] (R) -- (D2);
            \draw[->] (R) -- (mu);
            \draw[->] (MU) -- ++(0,0.5);
            \draw[->] (ML) -- ++(0,0.5);
        \begin{scope}[shift = {(4,0)}]
            \node at (1,0) {$=$};
            \node at (1.5,1) (e) {$e_{a_i}$};
            \node at (2.5,0) (M) {$M_{a_i,1}$};
            \node at (4,0) (D) {$\Delta_{a_i}$};
            \node at (1.5,-1) (R) {$R$};
            \node at (2.5,-1) (mu) {$\mu$};
            \draw[->] (e) -- (M);
            \draw[->] (R) -- (mu);
            \draw[->] (R) -- (M);
            \draw[->] (M) -- (D);
            \draw[->] (D) -- ++(120:1);
            \draw[->] (D) -- ++(60:1);
            \node at (1.7,-0.5) {\tiny{1}};
        \end{scope}
        \begin{scope}[shift = {(-1,-3)}]
            \node at (1,0) {$=$};
            \node at (3,0) (e) {$e_{a_i}$};
            \node at (4,0) (D) {$\Delta_{a_i}$};
            \node at (2,0) (eps) {$\epsilon_1$};
            \node at (1.5,-1) (R) {$R$};
            \node at (2.5,-1) (mu) {$\mu$};
            \draw[->] (e) -- (D);
            \draw[->] (R) -- (mu);
            \draw[->] (R) -- (eps);
            \draw[->] (D) -- ++(120:1);
            \draw[->] (D) -- ++(60:1);
            \node at (1.6,-0.5) {\tiny{1}};
        \end{scope}
        \begin{scope}[shift = {(4,-3)}]
            \node at (1,0) {$=$};
            \node at (2,0) (e) {$e_{a_i}$};
            \node at (3,0) (D) {$\Delta_{a_i}$};
            \draw[->] (e) -- (D);
            \draw[->] (D) -- ++(120:1);
            \draw[->] (D) -- ++(60:1);
            \node at (4.5,0) {$\dim(H_1)$};
        \end{scope}
        \end{tikzpicture}
    \end{center}

    Hence one component of the comultiplication of the cointegral $e_{a_i}$ is assigned to each strand passing through a dotted component. We get an additional contribution of $\dim(H_1)$.

    Now, step (e) does not change the tensor network above since it is invariant under handle slides. We now check the contribution from inserting the parallel $\kappa$ curves to unpaired $\alpha$ curves. The relevant part of the trisection diagram is 

    \begin{center}
        \begin{tikzpicture}
            \draw[red, ultra thick, decoration={markings, mark=at position 0.5 with {\arrow{>}}}, postaction={decorate}] (0,0) arc(90:270:0.5 and 1);
            \draw[darkgreen, ultra thick, decoration={markings, mark=at position 0.5 with {\arrow{<}}}, postaction={decorate}] (0.75,0) arc(90:270:0.5 and 1);
            \draw[dashed, red, ultra thick] (0,0) arc(90:-90:0.5 and 1);
            \draw[dashed, darkgreen, ultra thick] (0.75,0) arc(90:-90:0.5 and 1);
            \draw[->, blue, ultra thick] (-2,-0.5) -- (2,-0.5);
            \draw[->, blue, ultra thick] (-2,-1.5) -- (2,-1.5);
            \node at (-0.25,-1) {$a_i$};
        \end{tikzpicture}
    \end{center}

    This clearly has the same contribution to the tensor network as in the previous step, and hence the additional contribution given by introducing the parallel $\kappa$ curve is a factor of $\dim(H_1)$. We can then handle slide the $\alpha$ curves back into the $\alpha$-$\beta$ standard form at no cost. 

    The invariant is thus given by multiplying the assigned elements each component of the link in order of traversal, and evaluating the result with the integral $\mu$, finally multiplying the result of each component together, and normalizing. Note that the normalization factors will take care of the extraneous $\dim(H_1)$ factors. We wish to first compute $\langle T_{\mathrm{st}}\rangle_{\mcal H,e}^G$. Recall the trisection diagram for the stabilization shown in Figure \ref{fig:stabilize_diagram}. The associated tensor is:

    \begin{center}
        \begin{tikzpicture}
            \node at (0,0) (mu1) {$\mu$};
            \node at (1,0) (M1) {$M_{1,1}$};
            \node at (2,0.5) (R1) {$R$};
            \node at (2,-0.5) (e1) {$e_1$};
            \node at (3,0.5) (mu2) {$\mu$};
            \draw[->] (M1) -- (mu1);
            \draw[->] (R1) -- (M1);
            \draw[->] (mu2) -- (R1);
            \draw[->] (e1) -- (M1);
            \begin{scope}[shift = {(4,0)}]
                \node at (0,0) (mu1) {$\mu$};
            \node at (1,0) (M1) {$M_{1,1}$};
            \node at (2,0.5) (R1) {$R$};
            \node at (2,-0.5) (e1) {$e_1$};
            \node at (3,0.5) (mu2) {$\mu$};
            \draw[->] (M1) -- (mu1);
            \draw[->] (R1) -- (M1);
            \draw[->] (mu2) -- (R1);
            \draw[->] (e1) -- (M1);
            \end{scope}
            \begin{scope}[shift = {(8,0)}]
                \node at (0,0) (e) {$e_1$};
            \node at (1,0) (D) {$\Delta_1$};
            \node at (2,0.5) (mu1) {$\mu$};
            \node at (2,-0.5) (mu2) {$\mu$};
            \draw[->] (e) -- (D);
            \draw[->] (D) -- (mu1);
            \draw[->] (D) -- (mu2);
            \end{scope}
            \node at (-1,-2) {$=$};
            \begin{scope}[shift = {(0,-2)}]
                \node at (0,0) (mu) {$\mu$};
                \node at (1,0) (e) {$e_1$};
                \draw[->] (e) -- (mu);
                \node at (2,0) (i) {$i_1$};
                \node at (3,0) (mu2) {$\mu$};
                \draw[->] (i) -- (mu2);
            \end{scope}
            \begin{scope}[shift = {(4,-2)}]
                \node at (0,0) (mu) {$\mu$};
                \node at (1,0) (e) {$e_1$};
                \draw[->] (e) -- (mu);
                \node at (2,0) (i) {$i_1$};
                \node at (3,0) (mu2) {$\mu$};
                \draw[->] (i) -- (mu2);
            \end{scope}
            \begin{scope}[shift = {(8,-2)}]
                \node at (1,0) (mu) {$\mu$};
                \node at (0,0) (e) {$e_1$};
                \draw[->] (e) -- (mu);
                \node at (2,0) (i) {$i_1$};
                \node at (3,0) (mu2) {$\mu$};
                \draw[->] (i) -- (mu2);
            \end{scope}
        \end{tikzpicture}
    \end{center}
    
    Hence $\langle T_{\mathrm{st}}\rangle_{\mcal H,e}^G = \mu(i_1)^3 = \dim(H_1)^3$. Let $|L_1|$ be the number of dotted components in the Kirby diagram, and let $|L_2|$ be the number of components of the link $L$. Since for each parallel $\alpha$-$\beta$ pair, we get a contribution of $\dim(H_1)$, and for each unpaired $\alpha$ curve at step (e), we get a contribution of $\dim(H_1)$, then we can see that \[\dim(H_1)^{|L_1|}\dim(H_1)^{g-|L_2|}\dim(H_1)^{|L_2| - |L_1|}I_{\mcal H}(X,f,L) = \dim(H_1)^gI_{\mcal H}(X,f,L)\] Since $Z((\xi,\tilde x);G;\mcal H,e) = \dim(H_1)^{-g}\langle T(L)\rangle_{\mcal H,e}^G$, we see the invariants indeed match.
\end{proof}

\section{Invariants of 3-Manifolds}
\label{3ManifoldInvariants_Sec}
From an involutory Hopf pair of finite type $(H^\alpha, H^\beta, \langle\_\rangle)$, one can define a 3-manifold invariant in the following way: Let $e^\alpha$ and $e^\beta$ be cointegrals for $H^\alpha_1$ and $H^\beta_1$, respectively. Let $D = (\Sigma, \alpha,\beta)$ be a Heegaard diagram for a closed connected oriented 3-manifold $M$. To each upper circle $\alpha_i$ we assign the tensor 
\begin{center}
    \begin{tikzpicture}
        \node at (0,0) (D) {$\Delta^\alpha_{1,\dots ,1}$};
        \foreach \angle in {-135, -90,45,90,135} {
            \draw[->] (\angle:0.4) -- (\angle:1);
        }
        \node at (135:1.4) {$c_1$};
        \node at (90:1.4) {$c_2$};
        \node at (45:1.4) {$c_3$};
        \node at (-90:1.4) {$c_{n_i-1}$};
        \node at (-135:1.4) {$c_{n_i}$};
        \foreach \angle in {10, -20, -50} {
            \node at (\angle:0.7) {$\cdot$};
        }
        \node at (-2.5,0) (e) {$e^\alpha$};
        \draw[->] (e) -- (D);
    \end{tikzpicture}
\end{center}
where $c_k$ is the $k$th crossing encountered by traversing $\alpha_i$ from the base point along its orientation.

To each $\beta_j$, we assign the tensor
\begin{center}
    \begin{tikzpicture}
        \node at (0,0) (D) {$\Delta^\beta_{1,\dots ,1}$};
        \foreach \angle in {-135, -90,45,90,135} {
            \draw[->] (\angle:0.4) -- (\angle:1);
        }
        \node at (135:1.4) {$c_1$};
        \node at (90:1.4) {$c_2$};
        \node at (45:1.4) {$c_3$};
        \node at (-90:1.4) {$c_{n_j-1}$};
        \node at (-135:1.4) {$c_{n_j}$};
        \foreach \angle in {10, -20, -50} {
            \node at (\angle:1.2) {$\cdot$};
        }
        \node at (-2.5,0) (e) {$e^\beta$};
        \draw[->] (e) -- (D);
    \end{tikzpicture}
\end{center}
where $c_k$ is the $k$th crossing encountered by traversing $\beta_j$ from the base point along its orientation. To each crossing point $p$ of $\alpha_i$ with $\beta_j$, we contract via the tensor
\begin{center}
    \begin{tikzpicture}
        \node at (0,0) (S) {$(S^\beta_{1})^{\nu}$};
        \node at (2,0) (B) {$\bullet$};
        \draw[->] (-2,0) -- (S);
        \draw[->] (S) -- (B);
        \draw[<-] (B) -- (3,0);
    \end{tikzpicture}
\end{center}
where $\nu = 0$ if $(d_p(\alpha_i), d_p(\beta_j))$ forms a positive basis for $T_p\Sigma$, and $\nu = 1$ otherwise. Note that this construction is equivalent to the construction of the invariant of flat bundles over 4-manifolds defined above, ignoring the colored curve set.

Define $\langle D\rangle$ to be the contraction of this tensor network and define \[Z(M) = (\langle e^\alpha, e^\beta\rangle)^{-g(\Sigma)} \langle D\rangle\]

This construction is equivalent to the construct of a 3-manifold invariant given in \cite[Definition 3.12]{CCC22}.

\subsection{Invariants of flat bundles over 3-manifolds}

From an involutory Hopf $G$-doublet of finite type $(H^\alpha, H^\beta, \langle \_\rangle)$, one can define an invariant of flat bundles over 3-manifolds in the following way: let $G$ be a group, let $e^\alpha = \{e^\alpha_g\}_{g\in G}$ be a $G$-cointegral for $H^\alpha$, and let $e^\beta$ be a cointegral for $H^\beta_1$. Let $(\xi,\tilde x)$ be a pointed flat bundle over a 3-manifold $M$ with monodromy $f:\pi_1(M,x)\to G$. Let $D = (\Sigma, \alpha,\beta)$ be a Heegaard diagram colored by $f$ analogously to \cite{V05}. To each upper circle $\alpha_i$, suppose that it is colored by $a_i$ and assign the tensor 

\begin{center}
    \begin{tikzpicture}
        \node at (0,0) (D) {$\Delta^\alpha_{a_i}$};
        \foreach \angle in {-135, -90,45,90,135} {
            \draw[->] (\angle:0.4) -- (\angle:1);
        }
        \node at (135:1.4) {$c_1$};
        \node at (90:1.4) {$c_2$};
        \node at (45:1.4) {$c_3$};
        \node at (-90:1.4) {$c_{n_i-1}$};
        \node at (-135:1.4) {$c_{n_i}$};
        \foreach \angle in {10, -20, -50} {
            \node at (\angle:0.7) {$\cdot$};
        }
        \node at (-2.5,0) (e) {$e^\alpha_{a_i}$};
        \draw[->] (e) -- (D);
    \end{tikzpicture}
\end{center}
where $c_k$ is the $k$th crossing encountered by traversing $\alpha_i$ from the base point along its orientation.

To each $\beta_j$, we assign the tensor
\begin{center}
    \begin{tikzpicture}
        \node at (0,0) (D) {$\Delta^\beta_{b_1,\dots ,b_{n_j}}$};
        \foreach \angle in {-135, -90,45,90,135} {
            \draw[->] (\angle:0.4) -- (\angle:1);
        }
        \node at (135:1.4) {$c_1$};
        \node at (90:1.4) {$c_2$};
        \node at (45:1.4) {$c_3$};
        \node at (-90:1.4) {$c_{n_j-1}$};
        \node at (-135:1.4) {$c_{n_j}$};
        \foreach \angle in {10, -20, -50} {
            \node at (\angle:1.2) {$\cdot$};
        }
        \node at (-2.5,0) (e) {$e^\beta$};
        \draw[->] (e) -- (D);
    \end{tikzpicture}
\end{center}
where $c_k$ is the $k$th crossing encountered by traversing $\beta_j$ from the base point along its orientation, and $b_k$ is defined as follows: first fix a convention for positive and negative crossings as in the previous section and suppose $c_k$ is a crossing with $\beta_j$ and $\alpha_i$. Let $b_k = a_i^{\nu}$ where $\nu = 1$ if $c_k$ is a positive crossing, and $\nu = -1$ if $c_k$ is a negative crossing.

To each crossing of $\alpha_i$ with $\beta_j$, we contract via the tensor
\begin{center}
    \begin{tikzpicture}
        \node at (0,0) (S) {$(S^\beta_{a_i^{(-1)^\nu}})^{\nu}$};
        \node at (2,0) (B) {$\bullet$};
        \draw[->] (-2,0) -- (S);
        \draw[->] (S) -- (B);
        \draw[<-] (B) -- (3,0);
    \end{tikzpicture}
\end{center}
where $\nu = 0$ if the crossing is positive and $\nu = 1$ if the crossing is negative.

Define $\langle D\rangle$ as the contraction of this tensor network and define \[Z(\xi, \tilde x) = (\langle e^\alpha_1, e^\beta\rangle)^{-g(\Sigma)} \langle D\rangle\]

Note that if $H^\beta = H$, then setting $H^\alpha = H^{*,cop}$ with the standard pairing yields the same invariant as in \cite{V05}.

\section{Examples}
\label{Examples_Sec}
\begin{xmp}
    Let $G,H,H',H''$ be finite groups and let $\phi:H\to G$, $\phi':H'\to G$, $\phi'':H''\to G$, $\psi:H\to H'$, and $\psi':H\to H''$ be homomorphisms of groups such that $\phi = \phi'\circ\psi$ and $\phi = \phi''\circ \psi'$. We construct a Hopf $G$-triplet as follows: 
    
    Using the notation and conventions of \cite[Example 4.2]{V05}, let $(e_h)_{h\in H}$ be the standard basis for the Hopf algebras of complex-valued functions on $H$, and similarly denote by $(e'_h)_{h\in H'}$ and $(e''_h)_{h\in H''}$ the Hopf algebras of complex-valued functions on $H'$ and $H''$, respectively. Let $\delta_x^y$ be the Kronecker delta function assigning 1 when $x = y$ and 0 otherwise. Define the Hopf $G$-coalgebra $H^\phi = \{H^\phi_\alpha\}_{\alpha\in G}$ by:
    \[ H^\phi_\alpha = \sum_{g\in \phi^{-1}(\alpha)} \mbb C e_g,\qquad M^\phi_\alpha(e_g\otimes e_h) = \delta_g^h e_g\quad\text{ for any }g,h\in \phi^{-1}(\alpha) \]
    \[ i^\phi_\alpha = \sum_{g\in \phi^{-1}(\alpha)} e_g, \qquad \epsilon^\phi(e_g) = \delta_g^1 \quad\text{ for any }g\in \phi^{-1}(1) \]
    \[ \Delta^\phi_{\alpha,\beta}(e_g) = \sum_{\substack{h\in \phi^{-1}(\alpha) \\ k\in\phi^{-1}(\beta) \\ hk = g}} e_h\otimes e_k \quad\text{ for any }g\in \phi^{-1}(\alpha\beta)\]
    \[ S^\phi_\alpha(e_g) = e_{g^{-1}}\quad\text{ for any }g\in\phi^{-1}(\alpha) \]

    Similarly define $H^{\phi'}$ and $H^{\phi''}$. Note that these Hopf $G$-coalgebras are involutory and of finite type. Now define the following pairings: 

    Assume that $\ker(\phi'') = \langle a\rangle$ is cyclic of order $n$. Let $\mcal F:\mbb C[\ker(\phi'')]\to (\mbb C[\ker(\phi'')])^*$ be the Fourier transform isomorphism defined by the primitive $n$th root of unity $\exp(2\pi i/n)$. Also let $\rho:\ker(\phi'')\to \ker(\phi')$ be a group homomorphism. For $x = a^m\in \ker(\phi'')$ and $y\in \ker(\phi')$,
    \[ \langle e''_x,e'_y\rangle^{\phi'',\phi'} = (\rho\circ\mcal F^{-1}(e''_x))(e'_y) = \frac{1}{n}\sum_{a^p\in\rho^{-1}(y)}e^{\frac{2\pi i}{n}mp}\] we will use the notation $F_{xy} = \langle e_x'', e_y'\rangle^{\phi'',\phi'}$.

    For any $x\in (H^\phi_\alpha)^* = \sum_{g\in \phi^{-1}(\alpha)} \mbb C g$ and any $e'_y\in H^{\phi'}_\alpha$, 
    \[ \langle x,e'_y\rangle^{\phi,\phi'}_\alpha = e'_y(\psi(x)) = \delta_y^{\psi(x)} \]

    Similarly define $\langle \_ \rangle^{\phi,\phi''}_\alpha$ using $\psi'$.

    \begin{lem}\label{ex_triplet}
        If $\ker(\phi')$ and $\ker(\phi'')$ are cyclic of order 2 (or trivial) and are central in $H'$ and $H''$, respectively, then the structures above form a Hopf $G$-triplet.
    \end{lem}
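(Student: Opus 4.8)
The plan is to match the given data to the three slots of a Hopf $G$-triplet and then verify the three conditions of Definition \ref{HopfGTriplet_def}. Since $H^\phi$, $H^{\phi'}$, $H^{\phi''}$ are all Hopf $G$-coalgebras, I would set $H^\alpha = (H^\phi)^*$ (its dual, a Hopf $G$-algebra), $H^\beta = H^{\phi'}$, and $H^\kappa = H^{\phi''}$. Under the identification $H^\alpha_\alpha = (H^\phi_\alpha)^* = \mathbb{C}[\phi^{-1}(\alpha)]$, the group elements are group-like, so that $\Delta^\alpha_\alpha(g) = g\otimes g$ and the multiplication is group multiplication; this realizes $H^\alpha$ as the $G$-graded group algebra of $H$. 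The three pairings then occupy the correct slots: $\langle\_\rangle^{\kappa,\beta} = \langle\_\rangle^{\phi'',\phi'}$ on the identity sectors, $\langle\_\rangle^{\alpha,\beta} = \langle\_\rangle^{\phi,\phi'}$, and $\langle\_\rangle^{\alpha,\kappa} = \langle\_\rangle^{\phi,\phi''}$, each respecting the $G$-grading because $\phi = \phi'\circ\psi = \phi''\circ\psi'$.

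Conditions (a) and (b) I would dispatch first, as they reduce to standard facts. For the two doublets of part (b), the pairing $\langle g, e'_y\rangle = \delta_y^{\psi(g)}$ is exactly the pairing induced by the group homomorphism $\psi$ (and $\psi'$ for the other), and a group homomorphism dualizes to a Hopf $G$-algebra morphism $(H^\phi)^* \to (H^{\phi'})^{*,\mathrm{cop}}$; checking this amounts to verifying that $\psi$ intertwines the (co)multiplications on the group-like bases, which is immediate. For the Hopf pair of part (a), the induced map $H^{\phi''}_1 \to (H^{\phi'}_1)^{*,\mathrm{cop}}$ is $e''_x \mapsto \rho\circ\mathcal{F}^{-1}(e''_x)$. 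Here I use that on a cyclic group the Fourier transform $\mathcal{F}$ is a Hopf-algebra isomorphism intertwining pointwise multiplication with convolution and sending delta functions to characters, while $\rho$ extends to a Hopf-algebra morphism of group algebras; centrality of $\ker(\phi')$ guarantees that $\Delta^{\phi'}_{1,1}$ restricted to the identity sector is cocommutative enough for the target $(H^{\phi'}_1)^{*,\mathrm{cop}}$ to receive a morphism, and the order-$2$ hypothesis makes $\mathcal{F}$ symmetric and (up to the normalization $1/n$) self-inverse, so that the pairing is compatible with units, counits, and antipodes.

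The genuinely new content, and the main obstacle, is condition (c). Rather than verify the Drinfeld-double morphism directly, I would invoke the Fundamental Lemma of Hopf $G$-triplets (Lemma \ref{fundlemoftriplets_lem}) and check the equation in its part (b) on the standard bases. Evaluating at a group element of $H^\alpha_g$ and delta functions of $H^\kappa_g$ and $H^\beta_{g^{-1}}$, the comultiplications $\Delta^\kappa_{g,1}$ and $\Delta^\beta_{1,g^{-1}}$ split each input into a sector-$g$ piece and an identity-sector piece lying in $\ker(\phi'')$ resp.\ $\ker(\phi')$. The sector-$g$ pieces pair against $H^\alpha_g$ through $\psi$ and $\psi'$, while the identity-sector pieces pair with one another through the Fourier pairing $\langle\_\rangle^{\kappa,\beta}$; the whole equation thus reduces to a finite character-sum identity over $\ker(\phi'')$ and $\ker(\phi')$.

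The two hypotheses are exactly what make this identity close. Centrality of the kernels lets the identity-sector factors be pulled out of the group products appearing in $\Delta^\alpha_g$ and in the multiplications hidden in the pairings, so that the sector-$g$ and sector-$1$ contributions factor cleanly and can be matched term by term. The order-$2$ hypothesis makes every antipode $S^\kappa_g$, $S^\beta_{g^{-1}}$, and so on act trivially on the kernel factors (since $x^{-1} = x$ there) and makes the Fourier kernel $\exp(\tfrac{2\pi i}{n}mp) = (-1)^{mp}$ real and symmetric, so that the antipode-decorated right-hand side of the Fundamental Lemma equation agrees with the antipode-free left-hand side. I expect the bookkeeping of which group element lands in which tensor leg, and confirming that the $\rho$-twisted Fourier pairing survives the antipodes, to be the most delicate part; once the character sums are written out, the equality should be a short computation.
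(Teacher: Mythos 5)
Your proposal follows essentially the same route as the paper: identify $H^\alpha = (H^\phi)^*$ as the $G$-graded group algebra, dispatch the pair and doublet conditions as consequences of $\psi,\psi'$ being group homomorphisms and $\mathcal F,\rho$ inducing Hopf algebra maps, and then verify condition (c) by evaluating the Fundamental Lemma's equation (b) on basis elements, reducing it to a sum of Fourier coefficients over the kernels that closes precisely because the kernels are central and of exponent $2$ (so $h^{-1}=h$ and the kernel factors commute past $\psi'(z)$, $\psi(z)$). The only quibble is that you attribute part of the order-$2$ and centrality hypotheses to the Hopf pair condition in (a), whereas the paper needs them only for condition (c) (it later notes the pairing extends to arbitrary finite abelian kernels); this does not affect the correctness of the argument.
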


    \begin{proof}
        It is clear that $\langle\_\rangle^{\phi'',\phi'}$ induces a Hopf pair, since all involved maps are group homomorphisms and Hopf algebra isomorphisms, which induce maps of Hopf algebras. We check that $\langle \_\rangle^{\phi,\phi'}$ induces a Hopf $G$-doublet structure, then by symmetry, $\langle\_\rangle^{\phi,\phi''}$ will too.

        We first check $\langle\_\rangle^{\phi'',\phi'}_\alpha$ induces a coalgebra morphism. We will simplify the notation to just $\langle\_\rangle$ for brevity. Let $g,h\in (\phi')^{-1}(\alpha)$ and let $x\in \phi^{-1}(\alpha)$, then
        \[ \langle (M^\phi_\alpha)^*(x), e_g'\otimes e_h'\rangle = \langle x\otimes x, e_g'\otimes e_h'\rangle = \delta_g^{\psi(x)}\delta_h^{\psi(x)} = \delta_g^h \delta_g^{\psi(x)} = \langle x, M^{\phi'}_\alpha(e_g'\otimes e_h')\rangle\]
        
        \[\langle x, \sum_{g\in (\phi')^{-1}(\alpha)} e_g'\rangle = \sum_{g\in (\phi')^{-1}(\alpha)}\delta_g^{\psi(x)} = 1 = (i_\alpha)^*(x)\]

        hence this pairing defines a calgebra morphism. We next check that it intertwines the multiplication, unit, and antipode. Let $z\in (\phi')^{-1}(\alpha\beta)$ and let $x\in \phi^{-1}(\alpha)$, $y\in \phi^{-1}(\beta)$, then 
        \[ \langle x\otimes y, \sum_{\substack{g\in (\phi')^{-1}(\alpha) \\ h\in (\phi')^{-1}(\beta) \\ gh = z}} e_g'\otimes e_h'\rangle = \delta_z^{\psi(xy)} = \langle xy, e_z'\rangle \]

        For $z\in (\phi')^{-1}(1)$,
        \[ \langle 1, e'_z\rangle = \delta_z^{\psi(1)} = \delta_z^1 = \epsilon_1'(e_z')\]

        For $z\in (\phi')^{-1}(\alpha)$, $x\in \phi^{-1}(\alpha^{-1})$,
        \[ \langle x^{-1}, e_y'\rangle = \delta_y^{\psi(x^{-1})} = \delta_{y^{-1}}^{\psi(x)} = \langle x, e_{y^{-1}}'\rangle \]

        To finally prove that we have a Hopf $G$-triplet, we need to prove the equality in part (b) of lemma \ref{fundlemoftriplets_lem}. The LHS of the relation evaluated at $(e''_x\otimes e'_y)$, then post-evaluated at $z\in \phi^{-1}(\alpha)$ is given by 

        \[ \sum_{\substack{h\in \ker(\phi'') \\ k\in \ker(\phi') \\ \psi'(z)h = x \\ k\psi(z^{-1}) = y}} F_{hk}\]

        The RHS evaluated at the same is given by

        \[ \sum_{\substack{h\in \ker(\phi'') \\ k\in \ker(\phi') \\ \psi'(z^{-1})h = x^{-1} \\ k\psi(z) = y^{-1}}} F_{hk}\]

        These two are equal if $\psi'(z)h = h^{-1}\psi'(z)$ and $\psi(z)k^{-1} = k\psi(z)$ for each $x\in \phi''^{-1}(\alpha)$, $y\in \phi'^{-1}(\alpha^{-1})$, $z\in \phi^{-1}(\alpha)$, and each $\alpha\in G$. This obviously holds if the hypotheses hold.
    \end{proof}

    Note that we can extend the pairing between $H^{\phi'}$ and $H^{\phi''}$ when $\ker(\phi')$ and $\ker(\phi'')$ are finite abelian. Then the hypotheses in lemma \ref{ex_triplet} can be extended so that we just require all elements of $\ker(\phi')$ and $\ker(\phi'')$ to have order dividing 2 and are central.

    A $G$-integral for $H^\phi$ is given by $\mu^\phi_{\alpha}(x) = \sum_{a\in \phi^{-1}(\alpha)} a$, and a cointegral for $H^{\phi'}_1$ is $e^{\phi'} = e'_1$. Similarly $e^{\phi''} = e''_1$ is a cointegral for $H^{\phi''}_1$.

    Consider the following trisection diagram for $S^1 \times S^3$

    \begin{center}
        \begin{tikzpicture}
        \draw[closed, ultra thick] (0,0) to[curve through = {(1.5,-1) ..  (3,0) .. (1.5,1)}] (0,0);
        \draw[ultra thick] (1.5,0) ellipse (0.65 and 0.25); 
        \draw[red, ultra thick] (1.2,-0.21) arc(90:270:0.2 and 0.38);
        \draw[red, ultra thick, dashed] (1.2,-0.21) arc(90:-90:0.2 and 0.38);
        \draw[blue, ultra thick] (1.5,-0.23) arc(90:270:0.2 and 0.38);
        \draw[blue, ultra thick, dashed] (1.5,-0.23) arc(90:-90:0.2 and 0.38);
        \draw[green, ultra thick] (1.8,-0.21) arc(90:270:0.2 and 0.38);
        \draw[green, ultra thick, dashed] (1.8,-0.21) arc(90:-90:0.2 and 0.38);
    \end{tikzpicture}
    \end{center}

    Let $f:\pi_1(S^1\times S^3,*)\to G$ be the monodromy for a pointed flat bundle over $S^1\times S^3$. Since $\pi_1(S^1\times S^3,*) = \mbb Z$, this amounts to picking an element $\alpha$ of $G$. The trisection bracket reduces to $\mu_\alpha^\phi(i_\alpha) = |\phi^{-1}(\alpha)|$. For each $\alpha$ in the image of $\phi$, this is in bijection with the size of the kernel of $\phi$, but for each $\alpha$ not in the image of $\phi$, this is 0.

    To show that the map $\phi$ need not always be surjective, let $D_n$ denote the dihedral group of order 2n with generators $s$ and $r$ so that $s^2 = r^n = 1$, and let $G = D_8$, $H = H' = H'' = D_4$. Define $\phi' = \phi:D_4\to D_8$ by $(s,r)\mapsto (s,r^4)$. Then the kernel of $\phi'$ is $\langle r^2\rangle$, which is a central copy of $\mbb Z/2\mbb Z$ in $H'$. Define $\phi'':D_4\to D_8$ to be $(s,r)\mapsto (s,r^2)$. Then $\ker(\phi'')$ is trivial. Define $\psi:H\to H'$ by the identity and define $\psi':H\to H''$ by sending $(s,r)\mapsto (s,r^2)$. By lemma \ref{ex_triplet}, these structures give rise to a Hopf $D_8$-triplet where $\phi$ is not surjective. Hence the invariant can distinguish flat bundles.
\end{xmp}

\vspace{0.5cm}
\noindent\textbf{Acknowledgements.} NB and SXC are partially supported 
by National Science Foundation
under Award No. 2006667 and No. 2304990.

\end{document}